\newtheorem{theorem}{Theorem}
\newtheorem{corollary}[theorem]{Corollary}
\newtheorem{definition}[theorem]{Definition}
\newtheorem{example}[theorem]{Example}
\newtheorem{proposition}[theorem]{Proposition}
\newtheorem{remark}[theorem]{Remark}
\newenvironment{proof}[1][Proof]{\textbf{#1.} }{\ \rule{0.5em}{0.5em}}
\begin{document}

\title{A moment problem for pseudo-positive definite functionals}
\author{Ognyan Kounchev and Hermann Render}
\maketitle
\begin{abstract}
A moment problem is presented for a class of signed measures which are termed
pseudo-positive. Our main result says that for every pseudo-positive definite
functional (subject to some reasonable restrictions) there exists a
representing pseudo-positive measure.

The second main result is a characterization of determinacy in the class of
equivalent pseudo-positive representation measures. Finally the corresponding
truncated moment problem is discussed.

\textbf{Key words: } Multidimensional moment problem, pseudopositive measures,
spherical harmonics, multidimensional numerical integration.

MSC 2000 classification: 43A32, 47A57, 65D32
\end{abstract}

\section{Introduction}

Let $\mathbb{C}\left[  x_{1},...,x_{d}\right]  $ denote the space of all
polynomials in $d$ variables with complex coefficients and let $T:\mathbb{C}%
\left[  x_{1},...,x_{d}\right]  \rightarrow\mathbb{C}$ be a linear functional.
The \emph{multivariate moment problem} asks for conditions on the functional
$T$ such that there exists a \emph{non-negative} measure $\mu$ on
$\mathbb{R}^{d}$ with
\begin{equation}
T\left(  P\right)  =\int_{\mathbb{R}^{d}}P\left(  x\right)  d\mu\left(
x\right)  \label{eqmoment}%
\end{equation}
for all $P\in\mathbb{C}\left[  x_{1},...,x_{d}\right]  .$ It is well known
that \emph{positive definiteness} of the functional $T$ is a necessary
condition which means that
\[
T\left(  P^{\ast}P\right)  \geq0\qquad\text{ for all }P\in\mathbb{C}\left[
x_{1},...,x_{d}\right]  ;
\]
here $P^{\ast}$ is the polynomial whose coefficients are the complex
conjugates of the coefficients of $P.$ By a theorem of Haviland, a necessary
and sufficient condition for the existence of a non-negative measure $\mu$
satisfying (\ref{eqmoment}) is the \emph{positivity }of the
functional\emph{\ }$T$, i.e. $P\left(  x\right)  \geq0$ for all $x\in
\mathbb{R}^{d}$ implies $T\left(  P\right)  \geq0$ for all $P\in
\mathbb{C}\left[  x_{1},...,x_{d}\right]  $, cf. \cite[p. 111]{Berg1987}. In
the case $d=1$ it is a classical fact that a functional $T$ is positive if and
only if it is positive-definite, which is proved by using the representation
of a non-negative polynomial as a sum of two squares of polynomials, cf.
\cite[Chapter 1, section 1.1]{Akhi65}. A counter-example of D. Hilbert shows
that a representation of a multivariate non-negative polynomial as a finite
sum of squares is in general not possible, cf. \cite{BCR84}. Many authors have
tried to find additional assumptions on the functional $T$ such that positive
definiteness and positivity become equivalent, see \cite{BCR84},
\cite{CuFi00}, \cite[p. 47]{Fugl83}, \cite{McGr80}, \cite{PuVa99},
\cite{Schm91a}, \cite{StSz98}.

In this paper we shall be concerned with a \emph{modified} moment problem
which arised in the investigation of a new cubature formula of Gau\ss-Jacobi
type for measures $\mu$ in the multivariate setting, see \cite{KR05},
\cite{KR05B},\cite{KR07}. In contrast to the classical multivariate moment
problem we allow the measures $\mu$ under consideration to be \emph{signed}
measures on $\mathbb{R}^{d}$. Our approach is based on the new notions of
pseudo-positive definite functionals $T$ and pseudo-positive signed measures
$\mu,$ to be explained below.

A cornerstone of our approach is the \emph{Gauss representation} of a
polynomial which we provide below.\ First we recall some definitions and
notations: Let $\left|  x\right|  =\sqrt{x_{1}^{2}+....+x_{d}^{2}}$ be the
euclidean norm and $\mathbb{S}^{d-1}:=\left\{  x\in\mathbb{R}^{d}:\left|
x\right|  =1\right\}  $ be the unit sphere. We shall write $x\in\mathbb{R}%
^{d}$ in spherical coordinates $x=r\theta$ with $\theta\in\mathbb{S}^{d-1}.$
Let $\mathcal{H}_{k}\left(  \mathbb{R}^{d}\right)  $ be the set of all
harmonic homogeneous complex-valued polynomials of degree $k.$ Then
$f\in\mathcal{H}_{k}\left(  \mathbb{R}^{d}\right)  $ is called a \emph{solid
harmonic} and the restriction of $f$ to $\mathbb{S}^{d-1}$ a \emph{spherical
harmonic} of degree $k$. Throughout the paper we shall assume that
$Y_{k,l}:\mathbb{R}^{d}\rightarrow\mathbb{R},$ $l=1,...,a_{k}:=\dim
\mathcal{H}_{k}\left(  \mathbb{R}^{d}\right)  ,$ is an orthonormal basis of
$\mathcal{H}_{k}\left(  \mathbb{R}^{d}\right)  $ with respect to the scalar
product $\left\langle f,g\right\rangle _{\mathbb{S}^{d-1}}:=\int
_{\mathbb{S}^{d-1}}f\left(  \theta\right)  \overline{g\left(  \theta\right)
}d\theta.$ We shall often use the trivial identity $Y_{k,l}\left(  x\right)
=r^{k}Y_{kl}\left(  \theta\right)  .$  The Gauss representation (cf.
\cite{ABR92}, \cite{StWe71} or \cite[Theorem 10.2]{Koun00}) tells us that for
every $P\in\mathbb{C}\left[  x_{1},...,x_{d}\right]  $ there exist polynomials
$p_{k,l}$ such that
\begin{equation}
P\left(  x\right)  =\sum_{k=0}^{\deg P}\sum_{l=1}^{a_{k}}p_{k,l}\left(
r^{2}\right)  \cdot r^{k}Y_{k,l}\left(  \theta\right)  =\sum_{k=0}^{\deg
P}\sum_{l=1}^{a_{k}}p_{k,l}\left(  \left|  x\right|  ^{2}\right)
Y_{k,l}\left(  x\right)  \label{gaussrepresentation}%
\end{equation}
where $\deg P$ is the degree of the polynomial $P.$ By this formula it is
clear that the set of polynomials
\[
\left\{  \left|  x\right|  ^{2j}Y_{k,l}\left(  x\right)  :j\geq0,k\geq
0,l=1,2,...,a_{k}\right\}
\]
forms a basis for the space of all polynomials, hence this is an alternative
basis to the standard basis $\left\{  x^{\alpha}:\alpha\in\mathbb{Z}%
^{d},\alpha\geq0\right\}  $. The numbers
\begin{equation}
c_{j,k,l}:=\int\left|  x\right|  ^{2j}Y_{k,l}\left(  x\right)  d\mu\left(
x\right)  \label{cjkl}%
\end{equation}
are sometimes called the \emph{distributed moments} of $\mu,$ cf.
\cite{butkovskii}, \cite{butkovskii2}, \cite{kounchev84}, \cite{kounchev85},
\cite{kounchev87}. Let us remark that for fixed $k,l$ one may consider the
correspondence $j\longmapsto c_{j,k,l}$ as a univariate moment sequence in the
variable $j\in\mathbb{N}_{0}.$ The distributed moments can be expressed
linearly by the classical \emph{monomial moments}
\begin{equation}
\int x^{\alpha}d\mu\left(  x\right)  \label{calfa}%
\end{equation}
which are considered in the standard approach, and vice versa.

Now we will introduce our basic notions: A signed measure $\mu$ over
$\mathbb{R}^{d}$ is \emph{pseudo-positive with respect to the orthonormal
basis} $Y_{k,l},l=1,...,a_{k}$, $k\in\mathbb{N}_{0}$ if the inequality
\begin{equation}
\int_{\mathbb{R}^{d}}h\left(  \left|  x\right|  \right)  Y_{k,l}\left(
x\right)  d\mu\left(  x\right)  \geq0\label{defpspos}%
\end{equation}
holds for every non-negative continuous function $h:\left[  0,\infty\right)
\rightarrow\left[  0,\infty\right)  $ with compact support, and for all
$k\in\mathbb{N}_{0}$ and $l=1,2,...,a_{k}.$ Obviously, the
\emph{radially-symmetric measures} represent a subclass of the pseudo-positive
measures 

Given a linear functional $T:$ $\mathbb{C}\left[  x_{1},...,x_{d}\right]
\rightarrow\mathbb{C}$ and $Y_{k,l}\in\mathcal{H}_{k}\left(  \mathbb{R}%
^{d}\right)  $ we define the ''component functional'' $T_{k,l}:\mathbb{C}%
\left[  x_{1}\right]  \rightarrow\mathbb{C}$ by putting
\begin{equation}
T_{k,l}\left(  p\right)  :=T\left(  p(\left|  x\right|  ^{2})Y_{k,l}\left(
x\right)  \right)  \qquad\text{ for every }p\in\mathbb{C}\left[  x_{1}\right]
.\label{defpseudo2}%
\end{equation}
Note that in the notations (\ref{cjkl}), $T_{k,l}\left(  p\right)  =c_{j,k,l}$
for $p\left(  t\right)  =t^{j}$ with  $j\in\mathbb{N}_{0}.$ We say that the
functional $T$ is \emph{pseudo-positive definite} \emph{with respect to the
orthonormal basis} $Y_{k,l},l=1,...,a_{k}$, $k\in\mathbb{N}_{0}$ if
\[
T_{k,l}\left(  p^{\ast}\left(  t\right)  p\left(  t\right)  \right)
\geq0\text{ and }T_{k,l}\left(  t\cdot p^{\ast}\left(  t\right)  p\left(
t\right)  \right)  \geq0
\]
for every $p\left(  t\right)  \in\mathbb{C}\left[  x_{1}\right]  ,$ and for
every $k\in\mathbb{N}_{0}$ and $l=1,...,a_{k}$. 

Our main result in Section \ref{Smomentproblem} provides  a reasonable
\emph{sufficient criterion} guaranteeing that for a pseudo-positive definite
functional $T:$ $\mathbb{C}\left[  x_{1},...,x_{d}\right]  \rightarrow
\mathbb{C}$ there exists a pseudo-positive signed measure $\mu$ on
$\mathbb{R}^{d}$ with
\begin{equation}
\int_{\mathbb{R}^{d}}P\left(  x\right)  d\mu=T\left(  P\right)  \text{ for all
}P\in\mathbb{C}\left[  x_{1},...,x_{d}\right]  .\label{eqpoll}%
\end{equation}
This means that we give a solution to the \emph{pseudo-positive moment
problem}: this problem asks  for conditions on the moments (\ref{cjkl}) which
provide the existence of a pseudo-positive (signed) measure $\mu$ satisfying
the equalities  (\ref{cjkl}).The sufficient criterion is a \emph{summability
assumption} of the type
\begin{equation}
\sum_{k=0}^{\infty}\sum_{l=1}^{a_{k}}\int_{0}^{\infty}r^{N}r^{-k}d\sigma
_{k,l}\left(  r\right)  <\infty\text{ for all }N\in\mathbb{N}_{0}%
\label{maincond2}%
\end{equation}
where the measures $\sigma_{k,l}$ are representing measures of the component
functionals $T_{k,l},$ cf. Proposition \ref{PropStieltjes}. 

An essential advantage of our approach is that there exists a naturally
defined \emph{truncated moment problem} in the class of pseudo-positive
definite functionals. In Section \ref{STruncated} we shall formulate and solve
this problem which is important also from practical point of view. 

The second main result in Section \ref{Sdeterminacy} says that the
pseudo-positive representing measure $\mu$ of a pseudo-positive definite
functional $T:\mathbb{C}\left[  x_{1},...,x_{d}\right]  \rightarrow\mathbb{C}$
is unique in the class of all pseudo-positive signed measures whenever each
functional $T_{k,l}$ defined in (\ref{defpspos}) has a unique representing
measure on $\left[  0,\infty\right)  $ in the sense of Stieltjes (for the
precise definition see Section \ref{Sdeterminacy}). And vice versa, if a
pseudo--positive functional $T$ is determinate in the class of all
pseudo-positive signed measures and the summability condition (\ref{maincond2}%
) is satisfied, then each functional $T_{k,l}$ is determinate in the sense of
Stieltjes. The proof is essentially based on the properties of the Nevanlinna
extremal measures. In the last Section we shall give examples and some further
properties of pseudo-positive definite functionals.

Let us recall some terminology from measure theory: a signed measure on
$\mathbb{R}^{d}$ is a set function on the Borel $\sigma$-algebra on
$\mathbb{R}^{d}$ which takes real values and is $\sigma$-additive. For the
standard terminology, as Radon measure, Borel $\sigma$-algebra, etc., we refer
to \cite{BCR84}. By the \emph{Jordan decomposition } \cite[p. 125]{Cohn}, a
signed measure $\mu$ is the difference of two non-negative finite measures,
say $\mu=\mu^{+}-\mu^{-}$ with the property that there exist a Borel set $A$
such that $\mu^{+}\left(  A\right)  =0$ and $\mu^{-}\left(  \mathbb{R}%
^{n}\setminus A\right)  =0.$ The \emph{variation} of $\mu$ is defined as
$\left|  \mu\right|  :=\mu^{+}+\mu^{-}.$ The signed measure $\mu$ is called
\emph{moment measure} if all polynomials are integrable with respect to
$\mu^{+}$ and $\mu^{-},$ which is equivalent to integrability with respect to
the total variation. The \emph{support of a non-negative measure} $\mu$ on
$\mathbb{R}^{d}$ is defined as the complement of the largest open set $U$ such
that $\mu\left(  U\right)  =0.$ In particular, the \emph{support of the zero
measure} is the \emph{empty set}. The \emph{support of a signed measure}
$\sigma$ is defined as the support of the total variation $\left|
\sigma\right|  =\sigma_{+}+\sigma_{-}$ (see \cite[p. 226]{Cohn}). Recall that
in general, the supports of $\sigma_{+}$ and $\sigma_{-}$ are not disjoint
(cf. exercise 2 in \cite[p. 231]{Cohn}). For a surjective measurable mapping
$\varphi:X\rightarrow Y$ and a measure $\nu$ on $X$ the \emph{image measure}
$\nu^{\varphi}$ on $Y$ is defined by
\begin{equation}
\nu^{\varphi}\left(  B\right)  :=\nu\left(  \varphi^{-1}B\right)
\label{imagemeasure1}%
\end{equation}
for all Borel subsets $B$ of $Y.$ The equality $\int_{X}g\left(
\varphi\left(  x\right)  \right)  d\nu\left(  x\right)  =\int_{Y}g\left(
y\right)  d\nu^{\varphi}\left(  y\right)  $ holds for all integrable functions
$g$.

\section{The moment problem for pseudo-positive definite
functionals\label{Smomentproblem}}

Recall that for a continuous function $f:\mathbb{R}^{d}\rightarrow\mathbb{C}$
the \emph{Laplace--Fourier coefficient }is defined by
\begin{equation}
f_{k,l}\left(  r\right)  =\int_{\mathbb{S}^{d-1}}f\left(  r\theta\right)
Y_{k,l}\left(  \theta\right)  d\theta.\label{eqfourier}%
\end{equation}
The formal expansion
\begin{equation}
f\left(  r\theta\right)  =\sum_{k=0}^{\infty}\sum_{l=1}^{a_{k}}f_{k,l}\left(
r\right)  Y_{k,l}\left(  \theta\right)  \label{frtheta}%
\end{equation}
is the \emph{Laplace--Fourier series. }The following result may be found e.g.
in \cite{Baouendi} or \cite{Sob}.

\begin{proposition}
The Laplace-Fourier coefficient $f_{k,l}$ of a polynomial $f$ given by
(\ref{eqfourier}) is of the form $f_{k,l}\left(  r\right)  =r^{k}%
p_{k,l}\left(  r^{2}\right)  $ where $p_{k,l}$ is a univariate polynomial.
Hence, the Laplace-Fourier series (\ref{frtheta}) is equal to
\begin{equation}
f\left(  x\right)  =\sum_{k=0}^{\deg f}\sum_{l=1}^{a_{k}}p_{k,l}(\left|
x\right|  ^{2})Y_{k,l}\left(  x\right)  . \label{gauss}%
\end{equation}
\end{proposition}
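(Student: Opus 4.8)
The plan is to read off the claimed form of the Laplace--Fourier coefficient directly from the Gauss representation (\ref{gaussrepresentation}), using nothing more than the orthonormality of the basis on the sphere. The ingredients I need are: the representation (\ref{gaussrepresentation}) of $f$; the identity $Y_{k,l}(x)=r^{k}Y_{k,l}(\theta)$; the relation $\int_{\mathbb{S}^{d-1}}Y_{k',l'}(\theta)\overline{Y_{k,l}(\theta)}\,d\theta=\delta_{kk'}\delta_{ll'}$; and the fact that the $Y_{k,l}$ are real-valued, so that the conjugation occurring in the scalar product may be dropped.

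First I would apply (\ref{gaussrepresentation}) to $f$: there exist univariate polynomials $p_{k',l'}$ with
\[
f(r\theta)=\sum_{k'=0}^{\deg f}\sum_{l'=1}^{a_{k'}}p_{k',l'}(r^{2})\,r^{k'}Y_{k',l'}(\theta).
\]
Inserting this finite sum into the definition (\ref{eqfourier}) and interchanging the (finite) summation with the integral gives
\[
f_{k,l}(r)=\sum_{k'=0}^{\deg f}\sum_{l'=1}^{a_{k'}}p_{k',l'}(r^{2})\,r^{k'}\int_{\mathbb{S}^{d-1}}Y_{k',l'}(\theta)Y_{k,l}(\theta)\,d\theta .
\]
By orthonormality (and realness) every summand vanishes except the one with $k'=k$, $l'=l$, so that $f_{k,l}(r)=r^{k}p_{k,l}(r^{2})$. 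This establishes the first assertion, with $p_{k,l}$ being precisely the univariate polynomial supplied by the Gauss representation.

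For the second assertion I observe that the same computation yields $f_{k,l}\equiv0$ whenever $k>\deg f$, since then no term of the Gauss expansion of $f$ can match degree $k$ on the sphere. Consequently the formally infinite series (\ref{frtheta}) reduces to the finite sum $\sum_{k=0}^{\deg f}\sum_{l=1}^{a_{k}}r^{k}p_{k,l}(r^{2})Y_{k,l}(\theta)$; rewriting $r^{k}Y_{k,l}(\theta)=Y_{k,l}(x)$ and $r^{2}=|x|^{2}$ turns this into (\ref{gauss}), which by (\ref{gaussrepresentation}) equals $f$. In particular no convergence issue arises, the series being genuinely finite.

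I expect no serious obstacle: once (\ref{gaussrepresentation}) is granted the statement is essentially bookkeeping, the only delicate point being the harmlessness of the conjugation, i.e.\ that the fixed basis is real-valued. Should a self-contained argument avoiding (\ref{gaussrepresentation}) be preferred, I would reduce by linearity to a homogeneous $f$ of degree $m$, write $f(r\theta)=r^{m}g(\theta)$ with $g=f|_{\mathbb{S}^{d-1}}$, and invoke the classical decomposition of the restriction of a degree-$m$ homogeneous polynomial into spherical harmonics of degrees $m,m-2,\dots$; a nonzero Laplace--Fourier coefficient then occurs only for $k=m-2j$ with $j\ge0$, giving $f_{k,l}(r)=\widehat{g}_{k,l}\,r^{k+2j}=r^{k}p_{k,l}(r^{2})$. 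This is the step carrying the real content, but it is exactly the classical fact underlying (\ref{gaussrepresentation}).
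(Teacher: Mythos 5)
Your argument is correct. Note, however, that the paper does not actually prove this proposition: it simply refers the reader to the literature (Baouendi--Goulaouic--Lipkin and Sobolev), so there is no in-paper proof to match. What you do instead is reduce the statement to the Gauss representation (\ref{gaussrepresentation}), which the paper has already stated as a known fact, plus orthogonality on the sphere; this is the natural route and it is legitimate, since (\ref{gaussrepresentation}) is merely an existence statement while the proposition identifies the $p_{k,l}$ there with the Laplace--Fourier coefficients of $f$ --- so the derivation is a genuine, if short, step rather than a tautology. Two small points you should make explicit. First, killing the cross terms requires $\int_{\mathbb{S}^{d-1}}Y_{k',l'}(\theta)Y_{k,l}(\theta)\,d\theta=0$ also for $k'\neq k$; the paper only postulates orthonormality of $Y_{k,1},\dots,Y_{k,a_k}$ within each fixed degree $k$, so you are additionally invoking the standard fact that spherical harmonics of distinct degrees are orthogonal in $L^{2}(\mathbb{S}^{d-1})$ (e.g.\ via Green's identity or the Laplace--Beltrami eigenvalue argument). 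Second, your observation that the $Y_{k,l}$ are real-valued, so the conjugation in the scalar product is harmless, is exactly the right thing to check against the unconjugated integrand in (\ref{eqfourier}). Your closing sketch of a self-contained proof via the decomposition of a homogeneous polynomial restricted to the sphere into harmonics of degrees $m,m-2,\dots$ is also sound and is essentially the content of the cited references; either version is acceptable.
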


The next two Propositions characterize pseudo-positive definite functionals:

\begin{proposition}
\label{PropStieltjes} Let $T:\mathbb{C}\left[  x_{1},...,x_{d}\right]
\rightarrow\mathbb{C}$ be a pseudo-positive definite functional. Then for each
$k\in\mathbb{N}_{0},$ and  $l=1,...,a_{k},$ there exist non-negative measures
$\sigma_{k,l}$ with support in $\left[  0,\infty\right)  $ such that
\begin{equation}
T\left(  f\right)  =\sum_{k=0}^{\deg f}\sum_{l=1}^{a_{k}}\int_{0}^{\infty
}f_{k,l}\left(  r\right)  r^{-k}d\sigma_{k,l}\left(  r\right)
\label{eqTnicerep}%
\end{equation}
holds for all $f\in\mathbb{C}\left[  x_{1},...,x_{d}\right]  $ where
$f_{k,l}\left(  r\right)  $, $k\in\mathbb{N}_{0},$ $l=1,...,a_{k},$ are the
Laplace-Fourier coefficients of $f.$
\end{proposition}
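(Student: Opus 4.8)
The plan is to exploit the one-dimensional structure hidden inside the definition of pseudo-positive definiteness. For each fixed pair $(k,l)$, the component functional $T_{k,l}:\mathbb{C}\left[ x_{1}\right] \rightarrow\mathbb{C}$ satisfies $T_{k,l}\left( p^{\ast}p\right) \geq0$ and $T_{k,l}\left( t\cdot p^{\ast}p\right) \geq0$ for all $p\in\mathbb{C}\left[ x_{1}\right] $. These are precisely the two positivity conditions of the classical \emph{Stieltjes moment problem} on the half-line $\left[ 0,\infty\right) $. First I would invoke the solution of the Stieltjes moment problem (the Hamburger/Stieltjes theory) to produce, for each $(k,l)$, a non-negative measure $\sigma_{k,l}$ supported on $\left[ 0,\infty\right) $ with
\[
T_{k,l}\left( p\right) =\int_{0}^{\infty}p\left( t\right) d\sigma_{k,l}\left( t\right) \qquad\text{for all }p\in\mathbb{C}\left[ x_{1}\right] .
\]
This is the conceptual heart of the argument: the two-condition definition of pseudo-positive definiteness is tailored exactly so that each $T_{k,l}$ is a Stieltjes-positive univariate functional.

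Next I would change variables from $t$ to $r$ via $t=r^{2}$. More precisely, I would push the measure $\sigma_{k,l}$ on the $t$-axis forward under the map $r\mapsto r^{2}$ (or equivalently pull back), so that a term $p_{k,l}\left( r^{2}\right) $ gets integrated against a measure in the variable $r$. The goal is to rewrite $T_{k,l}\left( p\right) =\int_{0}^{\infty}p\left( r^{2}\right) d\widetilde{\sigma}_{k,l}\left( r\right) $ for the appropriate transported measure; I will keep the same name $\sigma_{k,l}$ for the resulting measure on the $r$-half-line to match the statement. Here I must track the factor $r^{-k}$ that appears in the claimed formula (\ref{eqTnicerep}); this factor arises because the Laplace-Fourier coefficient of $f$ is $f_{k,l}\left( r\right) =r^{k}p_{k,l}\left( r^{2}\right) $ by the preceding Proposition, so that $f_{k,l}\left( r\right) r^{-k}=p_{k,l}\left( r^{2}\right) $. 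Thus integrating $f_{k,l}\left( r\right) r^{-k}$ against $d\sigma_{k,l}$ is the same as integrating $p_{k,l}\left( r^{2}\right) $, which is exactly $T_{k,l}\left( p_{k,l}\right) $.

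To assemble the global formula, I would apply the Gauss/Laplace-Fourier representation (\ref{gauss}): any $f\in\mathbb{C}\left[ x_{1},...,x_{d}\right] $ decomposes as $f\left( x\right) =\sum_{k=0}^{\deg f}\sum_{l=1}^{a_{k}}p_{k,l}\left( \left| x\right| ^{2}\right) Y_{k,l}\left( x\right) $, a \emph{finite} sum. Applying the linear functional $T$ term by term and using the definition (\ref{defpseudo2}) gives $T\left( f\right) =\sum_{k,l}T_{k,l}\left( p_{k,l}\right) $, and then substituting the Stieltjes integral representation for each $T_{k,l}\left( p_{k,l}\right) $ together with the identity $f_{k,l}\left( r\right) r^{-k}=p_{k,l}\left( r^{2}\right) $ yields (\ref{eqTnicerep}) directly. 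The finiteness of the sum is what makes the interchange of $T$ with the decomposition legitimate without any convergence concerns.

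I expect the main obstacle to be the careful bookkeeping of the variable change and the $r^{-k}$ factor, ensuring the transported measure $\sigma_{k,l}$ is genuinely non-negative and supported on $\left[ 0,\infty\right) $, and that no measure mass is created or lost at $r=0$ (where $r^{-k}$ is singular for $k\geq1$). The cleanest route is to define $\sigma_{k,l}$ as the image of the Stieltjes measure under $t\mapsto\sqrt{t}$ and verify that the singularity at the origin is harmless because the integrand $f_{k,l}\left( r\right) r^{-k}=p_{k,l}\left( r^{2}\right) $ is a polynomial in $r^{2}$ and hence bounded near $r=0$. The invocation of the classical Stieltjes theorem is routine once the two positivity conditions are identified, so the real content is purely the univariate moment theory applied componentwise plus the algebraic identity linking $f_{k,l}$, $r^{-k}$, and $p_{k,l}\left( r^{2}\right) $.
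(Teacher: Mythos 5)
Your proposal is correct and follows essentially the same route as the paper's proof: solve the Stieltjes moment problem for each component functional $T_{k,l}$, transport the representing measure to the $r$-axis via the image measure under $t\mapsto\sqrt{t}$, and combine with the finite Gauss decomposition and the identity $f_{k,l}\left(  r\right)  r^{-k}=p_{k,l}\left(  r^{2}\right)  .$ Your extra remark that the apparent singularity of $r^{-k}$ at the origin is harmless is a sensible clarification the paper leaves implicit.
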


\begin{proof}
By the solution of the Stieltjes moment problem there exists a non-negative
measure $\mu_{k,l}$ with support in $\left[  0,\infty\right)  $ representing
the functional $T_{k,l},$ i.e. satisfying
\begin{equation}
T_{k,l}\left(  p\right)  =\int_{0}^{\infty}p\left(  t\right)  d\mu
_{k,l}\left(  t\right)  \qquad\text{for every }p\in\mathbb{C}\left[  t\right]
. \label{eqT1}%
\end{equation}
Let now $\varphi:\left[  0,\infty\right)  \rightarrow\left[  0,\infty\right)
$ be defined by $\varphi\left(  t\right)  =\sqrt{t}.$ Then we put
$\sigma_{k,l}:=\mu_{k,l}^{\varphi}$ where $\mu_{k,l}^{\varphi}$ is the image
measure defined in (\ref{imagemeasure1}). We obtain
\begin{equation}
\int_{0}^{\infty}h\left(  t\right)  d\mu_{k,l}\left(  t\right)  =\int
_{0}^{\infty}h\left(  r^{2}\right)  d\mu_{k,l}^{\varphi}\left(  r\right)  .
\label{eqT2}%
\end{equation}
Now use (\ref{gauss}), the linearity of $T$ and the definition of $T_{k,l}$ in
(\ref{defpseudo2}), and the equations (\ref{eqT1}) and (\ref{eqT2}) to obtain
\[
T\left(  f\right)  =\sum_{k=0}^{\deg f}\sum_{l=1}^{a_{k}}T_{k,l}\left(
p_{k,l}\right)  =\sum_{k=0}^{\deg f}\sum_{l=1}^{a_{k}}\int_{0}^{\infty}%
p_{k,l}\left(  r^{2}\right)  d\mu_{k,l}^{\varphi}\left(  r\right)  .
\]
Since $p_{k,l}\left(  r^{2}\right)  =r^{-k}f_{k,l}\left(  r\right)  $ the
claim (\ref{eqTnicerep}) follows from the last equation, which ends the proof.
\end{proof}

The next result shows that the converse of Proposition \ref{PropStieltjes} is
also true; not less important, it is a natural way of defining pseudo-positive
definite functionals.

\begin{proposition}
\label{PropTTT}Let $\sigma_{k,l},$ $k\in\mathbb{N}_{0},$ $l=1,...,a_{k},$ be
non-negative moment measures with support in $\left[  0,\infty\right)  .$ Then
the functional $T:\mathbb{C}\left[  x_{1},...,x_{d}\right]  \rightarrow
\mathbb{C}$ defined by
\begin{equation}
T\left(  f\right)  :=\sum_{k=0}^{\deg f}\sum_{l=1}^{a_{k}}\int_{0}^{\infty
}f_{k,l}\left(  r\right)  r^{-k}d\sigma_{k,l} \label{defTTT}%
\end{equation}
is pseudo-positive definite, where $f_{k,l}\left(  r\right)  $, $k\in
\mathbb{N}_{0},$ $l=1,...,a_{k},$ are the Laplace-Fourier coefficients of $f.$
\end{proposition}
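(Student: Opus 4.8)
The plan is to check the two defining inequalities of pseudo-positive definiteness directly, which amounts to first identifying the component functionals $T_{k,l}$ attached to the $T$ of (\ref{defTTT}). So I would begin by computing $T_{k,l}(p)=T\!\left(p(|x|^{2})Y_{k,l}(x)\right)$ explicitly from the definition (\ref{defpseudo2}).

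To carry this out I substitute $f(x)=p(|x|^{2})Y_{k,l}(x)$ into (\ref{defTTT}) and determine its Laplace-Fourier coefficients $f_{k',l'}$. Writing $x=r\theta$ and using the homogeneity identity $Y_{k,l}(r\theta)=r^{k}Y_{k,l}(\theta)$ gives $f(r\theta)=p(r^{2})r^{k}Y_{k,l}(\theta)$, so that by (\ref{eqfourier})
\[
f_{k',l'}(r)=p(r^{2})\,r^{k}\int_{\mathbb{S}^{d-1}}Y_{k,l}(\theta)Y_{k',l'}(\theta)\,d\theta .
\]
Because the $Y_{k,l}$ are real-valued and orthonormal, this integral equals $\delta_{kk'}\delta_{ll'}$, so in the double sum of (\ref{defTTT}) only the term $(k',l')=(k,l)$ survives and its factor $r^{-k}$ cancels $r^{k}$. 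I thus obtain the clean representation $T_{k,l}(p)=\int_{0}^{\infty}p(r^{2})\,d\sigma_{k,l}(r)$.

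With this formula the conclusion is immediate. Since $r^{2}$ is real we have $p^{\ast}(r^{2})p(r^{2})=|p(r^{2})|^{2}$, hence for every $p\in\mathbb{C}[x_{1}]$
\[
T_{k,l}(p^{\ast}p)=\int_{0}^{\infty}|p(r^{2})|^{2}\,d\sigma_{k,l}(r)\geq 0,\qquad T_{k,l}(t\,p^{\ast}p)=\int_{0}^{\infty}r^{2}|p(r^{2})|^{2}\,d\sigma_{k,l}(r)\geq 0,
\]
the non-negativity coming from $\sigma_{k,l}\geq 0$ together with $r^{2}\geq 0$ on $[0,\infty)$. As this holds for all $k$ and $l$, the functional $T$ is pseudo-positive definite. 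I do not expect a genuine obstacle here: $f$ being a polynomial makes the outer sum finite and the moment-measure hypothesis makes every integral converge, so the only point requiring a little care is the conjugation bookkeeping in $p^{\ast}$ together with the use of the real-valuedness of $Y_{k,l}$ to pass from the Hermitian scalar product $\langle\cdot,\cdot\rangle_{\mathbb{S}^{d-1}}$ to the plain integral in the orthonormality relation.
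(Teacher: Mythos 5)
Your proof is correct and follows essentially the same route as the paper: both compute the component functionals by reading off the (single-term) Laplace--Fourier series of $p(|x|^{2})Y_{k,l}(x)$ to get $T_{k,l}(p)=\int_{0}^{\infty}p(r^{2})\,d\sigma_{k,l}(r)$, and then conclude non-negativity of $T_{k,l}(p^{\ast}p)$ and $T_{k,l}(tp^{\ast}p)$ from the non-negativity of $\sigma_{k,l}$ and of $r^{2}$ on $[0,\infty)$. Your explicit orthonormality computation just spells out what the paper asserts directly.
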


\begin{proof}
Let us compute $T_{k,l}\left(  p\right)  $ where $p$ is a univariate
polynomial: by definition, $T_{k,l}\left(  p\right)  =T\left(  p(\left|
x\right|  ^{2})Y_{k,l}\left(  x\right)  \right)  $. The Laplace-Fourier series
of the function $x\mapsto\left|  x\right|  ^{2j}p(\left|  x\right|
^{2})Y_{k,l}\left(  x\right)  $ is equal to $r^{2j}p\left(  r^{2}\right)
r^{k}Y_{k,l}\left(  \theta\right)  $, hence
\[
T_{k,l}\left(  t^{j}p\left(  t\right)  \right)  =T\left(  \left|  x\right|
^{2j}p(\left|  x\right|  ^{2})Y_{k,l}\left(  x\right)  \right)  =\int
_{0}^{\infty}r^{j}p\left(  r^{2}\right)  d\sigma_{k,l}%
\]
for every natural number $j.$ Taking $j=0$ and $j=1$ one concludes that
$T_{k,l}\left(  p^{\ast}\left(  t\right)  p\left(  t\right)  \right)  \geq0$
and $T_{k,l}\left(  tp^{\ast}\left(  t\right)  p\left(  t\right)  \right)
\geq0$ for all univariate polynomials $p$, hence $T$ is pseudo-positive definite.
\end{proof}

By $C\left(  X\right)  $ we denote the space of all continuous complex-valued
functions on a topological space $X$ while $C_{c}\left(  X\right)  $ is the
set of all $f\in C\left(  X\right)  $ having compact support. Further
$C_{pol}\left(  \mathbb{R}^{d}\right)  $ is the space of all polynomially
bounded, continuous functions, so for each $f\in C_{pol}\left(  \mathbb{R}%
^{d}\right)  $ there exists $N\in\mathbb{N}_{0},$ such that $\left|  f\left(
x\right)  \right|  \leq C_{N}\left(  1+\left|  x\right|  \right)  ^{N}$ for
some constant $C_{N}$ (depending on $f$ ) for all $x\in\mathbb{R}^{d}.$ A
useful space of test functions is
\begin{equation}
C^{\times}\left(  \mathbb{R}^{d}\right)  :=\{\sum_{k=0}^{N}\sum_{l=1}^{a_{k}%
}f_{k,l}\left(  \left|  x\right|  \right)  Y_{k,l}\left(  x\right)
:N\in\mathbb{N}_{0}\text{ and }f_{k,l}\in C\left[  0,\infty\right)  \}.
\label{Ccross}%
\end{equation}
which can be rephrased as the set of all continuous functions with a finite
Laplace-Fourier series.

\begin{proposition}
\label{pseudopos}Let $\mu$ be a pseudo-positive moment measure on
$\mathbb{R}^{d}.$ Then there exist unique moment measures $\mu_{k,l}$ defined
on $\left[  0,\infty\right)  $ such that
\begin{equation}
\int_{0}^{\infty}h\left(  t\right)  d\mu_{k,l}\left(  t\right)  =\int
_{\mathbb{R}^{d}}h\left(  \left|  x\right|  \right)  Y_{k,l}\left(  x\right)
d\mu\label{eqlim}%
\end{equation}
holds for all $h\in C_{pol}\left[  0,\infty\right)  $. Further for each $f\in
C^{\times}\left(  \mathbb{R}^{d}\right)  \cap C_{pol}\left(  \mathbb{R}%
^{d}\right)  $
\[
\int_{\mathbb{R}^{d}}f\left(  x\right)  d\mu=\sum_{k=0}^{\infty}\sum
_{l=1}^{a_{k}}\int_{0}^{\infty}f_{k,l}\left(  r\right)  r^{-k}d\mu_{k,l}.
\]
\end{proposition}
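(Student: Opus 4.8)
The plan is to construct each $\mu_{k,l}$ via the Riesz representation theorem applied to a suitable positive functional, then to upgrade the resulting identity from compactly supported to polynomially bounded test functions by a truncation argument, and finally to obtain the series formula by linearity.

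First, for fixed $k$ and $l$, I would define the linear functional $L_{k,l}$ on $C_{c}[0,\infty)$ by $L_{k,l}(h):=\int_{\mathbb{R}^{d}}h(|x|)Y_{k,l}(x)\,d\mu$. This is well defined because $\mu$, being a moment measure, is finite, while $h(|x|)Y_{k,l}(x)$ is bounded with compact support. The defining inequality (\ref{defpspos}) of pseudo-positivity says precisely that $L_{k,l}(h)\geq0$ whenever $h\geq0$, so $L_{k,l}$ is a positive linear functional, and the Riesz representation theorem yields a unique non-negative Radon measure $\mu_{k,l}$ on $[0,\infty)$ with $L_{k,l}(h)=\int_{0}^{\infty}h\,d\mu_{k,l}$ for all $h\in C_{c}[0,\infty)$.

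The crucial step is to show that $\mu_{k,l}$ is a moment measure and that (\ref{eqlim}) extends to all $h\in C_{pol}[0,\infty)$. Here I would use the pointwise bound $|Y_{k,l}(x)|=|x|^{k}|Y_{k,l}(\theta)|\leq M_{k,l}|x|^{k}$ with $M_{k,l}:=\max_{\theta}|Y_{k,l}(\theta)|$. Choosing $h_{n}\in C_{c}[0,\infty)$ with $0\leq h_{n}\uparrow t^{N}$, monotone convergence on the $\mu_{k,l}$ side gives $\int_{0}^{\infty}t^{N}d\mu_{k,l}=\lim_{n}L_{k,l}(h_{n})$, while on the $\mathbb{R}^{d}$ side the integrand $h_{n}(|x|)Y_{k,l}(x)$ is dominated by $M_{k,l}|x|^{N+k}\in L^{1}(|\mu|)$; dominated convergence then identifies the limit as the finite number $\int_{\mathbb{R}^{d}}|x|^{N}Y_{k,l}(x)\,d\mu$. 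This proves that $\mu_{k,l}$ is a moment measure. For general $h\in C_{pol}[0,\infty)$ I would multiply by a cutoff $\chi_{R}$ equal to $1$ on $[0,R]$ and supported in $[0,R+1]$, apply the identity just established to $h\chi_{R}\in C_{c}[0,\infty)$, and let $R\to\infty$, invoking dominated convergence on both sides with the integrable majorants $|h|\in L^{1}(\mu_{k,l})$ and $M_{k,l}|h(|x|)|\,|x|^{k}\in L^{1}(|\mu|)$. Uniqueness of $\mu_{k,l}$ is immediate from the uniqueness clause of Riesz, since any measure satisfying (\ref{eqlim}) must agree with $L_{k,l}$ on $C_{c}[0,\infty)$.

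The main obstacle is precisely this passage to polynomially bounded functions: the signed nature of both $\mu$ and $Y_{k,l}$ rules out a direct monotone-convergence argument on $\mathbb{R}^{d}$, so everything hinges on producing the integrable dominating function $M_{k,l}|x|^{N+k}$, which is available only because $\mu$ is assumed to be a moment measure. Finally, for the series formula, I would write $f=\sum_{k=0}^{N}\sum_{l=1}^{a_{k}}g_{k,l}(|x|)Y_{k,l}(x)$ (a finite sum, by the definition of $C^{\times}(\mathbb{R}^{d})$), observe by orthonormality of the $Y_{k,l}$ that $g_{k,l}(r)=r^{-k}f_{k,l}(r)$ where $f_{k,l}$ is the Laplace-Fourier coefficient, and verify that $g_{k,l}\in C_{pol}[0,\infty)$: indeed $|f_{k,l}(r)|\leq\sup_{\theta}|f(r\theta)|\int_{\mathbb{S}^{d-1}}|Y_{k,l}|\,d\theta\leq C(1+r)^{M}$ because $f\in C_{pol}(\mathbb{R}^{d})$, whence $g_{k,l}(r)=r^{-k}f_{k,l}(r)$ is polynomially bounded and continuous on $[0,\infty)$. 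Integrating the finite sum term by term (each summand $g_{k,l}(|x|)Y_{k,l}(x)$ is $|\mu|$-integrable by the same bound) and applying the extended identity (\ref{eqlim}) with $h=g_{k,l}$ gives $\int_{\mathbb{R}^{d}}f\,d\mu=\sum_{k,l}\int_{0}^{\infty}g_{k,l}(r)\,d\mu_{k,l}=\sum_{k,l}\int_{0}^{\infty}f_{k,l}(r)r^{-k}\,d\mu_{k,l}$, the sum being effectively finite since $f_{k,l}=0$ for $k>N$.
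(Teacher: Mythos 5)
Your proposal is correct and follows essentially the same route as the paper: define a positive functional on $C_{c}[0,\infty)$ via pseudo-positivity, invoke the Riesz representation theorem, extend (\ref{eqlim}) to $C_{pol}[0,\infty)$ by a cut-off plus monotone/dominated convergence argument using the moment-measure hypothesis to supply the integrable majorant, and then obtain the series formula from the finiteness of the Laplace--Fourier expansion and the polynomial boundedness of the coefficients. The only cosmetic difference is that you first verify integrability on the monomials $t^{N}$ before treating general $h$, whereas the paper handles $|h|$ for $h\in C_{pol}[0,\infty)$ directly; this does not change the substance of the argument.
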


\begin{proof}
By definition of pseudo-positivity, $M_{k,l}\left(  h\right)  :=\int
_{\mathbb{R}^{d}}h\left(  \left|  x\right|  \right)  Y_{k,l}\left(  x\right)
d\mu$ defines a positive functional on $C_{c}\left(  \left[  0,\infty\right)
\right)  .$ By the Riesz representation theorem there exists a unique
non-negative measure $\mu_{k,l}$ such that $M_{k,l}\left(  h\right)  =\int
_{0}^{\infty}h\left(  t\right)  d\mu_{k,l}$ for all $h\in C_{c}\left(  \left[
0,\infty\right)  \right)  .$ We want to show that (\ref{eqlim}) holds for all
$h\in C_{pol}\left[  0,\infty\right)  $. For this, let $u_{R}:\left[
0,\infty\right)  \rightarrow\left[  0,1\right]  $ be a \emph{cut--off
function, }so $u_{R}$ is continuous and decreasing such that
\begin{equation}
u_{R}\left(  r\right)  =1\text{ for all }0\leq r\leq R\text{ and }u_{R}\left(
r\right)  =0\text{ for all }r\geq R+1. \label{defuR}%
\end{equation}
Let $h\in C_{pol}\left[  0,\infty\right)  .$ Then $u_{R}h\in C_{c}\left(
\left[  0,\infty\right)  \right)  $ and
\begin{equation}
\int_{0}^{\infty}u_{R}\left(  t\right)  h\left(  t\right)  d\mu_{k,l}%
=\int_{\mathbb{R}^{d}}u_{R}\left(  \left|  x\right|  \right)  h\left(  \left|
x\right|  \right)  Y_{k,l}\left(  x\right)  d\mu. \label{eqlimitR}%
\end{equation}
Note that $\left|  u_{R}\left(  t\right)  h\left(  t\right)  \right|
\leq\left|  u_{R+1}\left(  t\right)  h\left(  t\right)  \right|  $ for all
$t\in\left[  0,\infty\right)  .$ Hence by the monotone convergence theorem
\begin{equation}
\int_{0}^{\infty}\left|  h\left(  t\right)  \right|  d\mu_{k,l}=\lim
_{R\rightarrow\infty}\int_{0}^{\infty}\left|  u_{R}\left(  t\right)  h\left(
t\right)  \right|  d\mu_{k,l}. \label{eqlimitR2}%
\end{equation}
On the other hand, it is obvious that
\begin{equation}
\left|  \int_{\mathbb{R}^{d}}u_{R}\left(  \left|  x\right|  \right)  \left|
h\left(  \left|  x\right|  \right)  \right|  Y_{k,l}\left(  x\right)
d\mu\right|  \leq\int_{\mathbb{R}^{d}}\left|  h\left(  \left|  x\right|
\right)  Y_{k,l}\left(  x\right)  \right|  d\left|  \mu\right|  . \label{eq36}%
\end{equation}
The last expression is finite since $\mu$ is a moment measure. From
(\ref{eqlimitR2}), (\ref{eqlimitR}) applied to $\left|  h\right|  $ and
(\ref{eq36}) it follows that $\left|  h\right|  $ is integrable for $\mu
_{k,l}.$ Using Lebesgue's convergence theorem for $\mu$ and (\ref{eqlimitR})
it is easy to that (\ref{eqlim}) holds. For the last statement recall that
each $f\in C^{\times}\left(  \mathbb{R}^{d}\right)  $ has a finite
Laplace-Fourier series, and it is easy to see that the Laplace-Fourier
coefficients $f_{k,l}$ are in $C_{pol}\left[  0,\infty\right)  $ if $f\in
C_{pol}\left(  \mathbb{R}^{d}\right)  $, see (\ref{eqffpol}) below.
\end{proof}

The next theorem is the main technical result of this section. 

\begin{theorem}
\label{ThmRepG}Let $\sigma_{k,l},$ $k\in\mathbb{N}_{0},$ $l=1,...,a_{k},$ be
non-negative measures with support in $\left[  0,\infty\right)  $ such that
for any $N\in\mathbb{N}_{0}$
\begin{equation}
C_{N}:=\sum_{k=0}^{\infty}\sum_{l=1}^{a_{k}}\int_{0}^{\infty}r^{N}%
r^{-k}d\sigma_{k,l}<\infty\text{ .} \label{neu21}%
\end{equation}
Then for the functional $T:\mathbb{C}\left[  x_{1},...,x_{d}\right]
\rightarrow\mathbb{C}$ defined by (\ref{defTTT}) there exists a
pseudo-positive, signed moment measure $\sigma$ such that
\[
T\left(  f\right)  =\int_{\mathbb{R}^{n}}fd\sigma\text{ for all }%
f\in\mathbb{C}\left[  x_{1},...,x_{d}\right]  .
\]
\end{theorem}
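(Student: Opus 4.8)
The plan is to realize $\sigma$ as the image, under the polar-coordinate map $\Phi:[0,\infty)\times\mathbb{S}^{d-1}\to\mathbb{R}^{d}$, $\Phi(r,\theta)=r\theta$, of a single signed measure $\rho$ on the product space assembled from the data $\sigma_{k,l}$ and the harmonics. Concretely, for each pair $(k,l)$ I would form the product signed measure $\rho_{k,l}$ on $[0,\infty)\times\mathbb{S}^{d-1}$ whose radial factor is $r^{-k}d\sigma_{k,l}(r)$ and whose angular factor is the signed measure $Y_{k,l}(\theta)\,d\theta$ on the sphere, i.e. $\int g\,d\rho_{k,l}=\int_{0}^{\infty}\bigl(\int_{\mathbb{S}^{d-1}}g(r,\theta)Y_{k,l}(\theta)\,d\theta\bigr)r^{-k}d\sigma_{k,l}(r)$. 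The hypothesis (\ref{neu21}) with $N=0$ guarantees $\int_{0}^{\infty}r^{-k}d\sigma_{k,l}<\infty$ (in particular $\sigma_{k,l}$ charges no mass at $0$ when $k\geq1$, so the factor $r^{-k}$ is harmless), whence each $\rho_{k,l}$ is a finite signed measure. I then set $\rho:=\sum_{k,l}\rho_{k,l}$ and $\sigma:=\rho^{\Phi}$ in the sense of the image measure (\ref{imagemeasure1}).

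The decisive estimate is a uniform bound on the total mass of the angular factors: by Cauchy--Schwarz and the orthonormality of the $Y_{k,l}$,
\[
\int_{\mathbb{S}^{d-1}}\left|Y_{k,l}(\theta)\right|d\theta\leq\left(\int_{\mathbb{S}^{d-1}}d\theta\right)^{1/2}\left(\int_{\mathbb{S}^{d-1}}\left|Y_{k,l}\right|^{2}d\theta\right)^{1/2}=\left(\int_{\mathbb{S}^{d-1}}d\theta\right)^{1/2}=:\kappa,
\]
a constant independent of $k$ and $l$. Hence $\left|\rho_{k,l}\right|([0,\infty)\times\mathbb{S}^{d-1})\leq\kappa\int_{0}^{\infty}r^{-k}d\sigma_{k,l}$, and summing over $(k,l)$ and invoking (\ref{neu21}) with $N=0$ gives $\sum_{k,l}\left|\rho_{k,l}\right|(\cdot)\leq\kappa C_{0}<\infty$. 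Thus the series $\sum_{k,l}\rho_{k,l}$ converges in the (complete) total-variation norm to a finite signed measure $\rho$, so $\sigma=\rho^{\Phi}$ is a finite signed Borel measure on $\mathbb{R}^{d}$. Inserting the weight $r^{N}$ into the same computation yields $\int_{\mathbb{R}^{d}}|x|^{N}d|\sigma|\leq\int r^{N}d|\rho|\leq\kappa C_{N}<\infty$ for every $N$, so $\sigma$ is a moment measure.

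It remains to verify the two required properties. For the representation I would take $f\in\mathbb{C}[x_{1},\dots,x_{d}]$, apply the change-of-variables formula for the image measure, and interchange summation and integration (justified by the total-variation bound just established) to obtain $\int_{\mathbb{R}^{d}}f\,d\sigma=\sum_{k,l}\int_{0}^{\infty}\bigl(\int_{\mathbb{S}^{d-1}}f(r\theta)Y_{k,l}(\theta)\,d\theta\bigr)r^{-k}d\sigma_{k,l}(r)=\sum_{k,l}\int_{0}^{\infty}f_{k,l}(r)r^{-k}d\sigma_{k,l}$, which is exactly $T(f)$ by (\ref{defTTT}), the sum being finite since $f_{k,l}\equiv0$ for $k>\deg f$. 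For pseudo-positivity I would fix a non-negative $h\in C_{c}[0,\infty)$, write $h(|x|)Y_{k,l}(x)=h(r)r^{k}Y_{k,l}(\theta)$, and compute $\int_{\mathbb{R}^{d}}h(|x|)Y_{k,l}(x)\,d\sigma=\sum_{k',l'}\bigl(\int_{0}^{\infty}h(r)r^{k}r^{-k'}d\sigma_{k',l'}\bigr)\int_{\mathbb{S}^{d-1}}Y_{k,l}Y_{k',l'}\,d\theta$; orthonormality collapses this to $\int_{0}^{\infty}h(r)\,d\sigma_{k,l}\geq0$, which is precisely (\ref{defpspos}).

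The main obstacle is the first half: producing a genuine $\sigma$-additive signed measure out of the infinite, sign-changing family $\{\rho_{k,l}\}$ rather than a mere finitely additive prescription on polynomials. Everything hinges on combining the uniform $L^{1}(\mathbb{S}^{d-1})$-bound $\kappa$ for the orthonormal harmonics with the summability (\ref{neu21}); together these force the series of signed measures to converge in total variation and simultaneously deliver the moment-measure property for every $N$. Once $\sigma$ exists, the orthonormality relations make both the identity $T(f)=\int f\,d\sigma$ and the pseudo-positivity essentially automatic, so the analytic content is concentrated entirely in the convergence of $\sum_{k,l}\rho_{k,l}$.
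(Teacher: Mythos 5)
Your construction is correct, and it takes a genuinely different route from the paper. The paper first extends $T$ to a functional $\widetilde{T}$ on $C_{pol}\left(\mathbb{R}^{d}\right)$, restricts it to $C_{c}\left(\mathbb{R}^{d}\right)$, invokes the Riesz representation theorem to produce $\sigma$, and then must work to show that polynomials are $\left|\sigma\right|$-integrable: it uses the formula $\int g\,d\sigma_{+}=\sup\left\{T_{0}\left(h\right):0\leq h\leq g\right\}$ together with cut-off functions and monotone convergence to bound $\int\left|x\right|^{N}d\sigma_{\pm}$. You instead build $\sigma$ explicitly as the polar-coordinate push-forward of a total-variation-convergent series of product measures $\rho_{k,l}=\left(r^{-k}d\sigma_{k,l}\right)\otimes\left(Y_{k,l}\,d\theta\right)$. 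Both arguments rest on exactly the same quantitative input, namely the Cauchy--Schwarz bound $\int_{\mathbb{S}^{d-1}}\left|Y_{k,l}\right|d\theta\leq\sqrt{\omega_{d-1}}$ combined with $C_{N}<\infty$ (this is the paper's estimate (\ref{eqffpol}) in dual form). What your route buys is that the most delicate step of the paper's proof becomes automatic: the bound $\int\left|x\right|^{N}d\left|\sigma\right|\leq\sqrt{\omega_{d-1}}\,C_{N}$ falls out of the construction, as do the assertions of Remark \ref{Rrep} (support in the annulus, and the fact that the induced component measures of $\sigma$ are exactly the $\sigma_{k,l}$). Two small points you should make explicit if you write this up: the interchange $\int f\,d\rho=\sum_{k,l}\int f\,d\rho_{k,l}$ for an unbounded polynomial $f$ needs the weighted bound $\sum_{k,l}\int r^{N}d\left|\rho_{k,l}\right|\leq\sqrt{\omega_{d-1}}\,C_{N}$ (dominated convergence against the finite measure $\sum_{k,l}\left|\rho_{k,l}\right|$), not just the $N=0$ case; and the moment estimate uses $\left|\rho^{\Phi}\right|\leq\left|\rho\right|^{\Phi}$, which holds but deserves a word since $\Phi$ is not injective at the origin.
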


\begin{remark}
\label{Rrep} 1. If the measures $\sigma_{k,l}$ have supports in the compact
interval $\left[  \rho,R\right]  $ for all $k\in\mathbb{N}_{0},$
$l=1,...,a_{k},$ then the measure $\sigma$ in Theorem \ref{ThmRepG} has
support in the annulus $\left\{  x\in\mathbb{R}^{d}:\rho\leq\left|  x\right|
\leq R\right\}  .$

2. In the case of $R<\infty$ , it obviously suffices to assume that
$C_{0}<\infty$ instead of $C_{N}<\infty$ for all $N\in\mathbb{N}_{0}.$

3. The proof of Theorem \ref{ThmRepG} shows that $\sigma_{k,l}$ is equal to
the measure induced by $\sigma$ with respect to the solid harmonic
$Y_{k,l}\left(  x\right)  ,$ cf. (\ref{defpspos}).
\end{remark}

\begin{proof}
1. We show at first that $T$ can be extended to a linear functional
$\widetilde{T}$ defined on $C_{pol}\left(  \mathbb{R}^{d}\right)  $ by the
formula
\begin{equation}
\widetilde{T}\left(  f\right)  :=\sum_{k=0}^{\infty}\sum_{l=1}^{a_{k}}\int
_{0}^{\infty}f_{k,l}\left(  r\right)  r^{-k}d\sigma_{k,l} \label{eqtschlange}%
\end{equation}
for $f\in C_{pol}\left(  \mathbb{R}^{d}\right)  ,$ where $f_{k,l}\left(
r\right)  $ are the Laplace-Fourier coefficients of $f$. Indeed, since $f\in
C_{pol}\left(  \mathbb{R}^{d}\right)  $ is of polynomial growth there exists
$C>0$ and $N\in\mathbb{N}$ such that $\left|  f\left(  x\right)  \right|  \leq
C(1+\left|  x\right|  ^{N})$. Let $\omega_{d-1}$ denote the surface area of
the unit sphere. It follows from (\ref{eqfourier}) that
\begin{equation}
\left|  f_{k,l}\left(  r\right)  \right|  \leq C\left(  1+r^{N}\right)
\sqrt{\omega_{d-1}}\sqrt{\int_{\mathbb{S}^{d-1}}\left|  Y_{k,l}\left(
\theta\right)  \right|  ^{2}d\theta}=C\left(  1+r^{N}\right)  \sqrt
{\omega_{d-1}}, \label{eqffpol}%
\end{equation}
where we used the Cauchy-Schwarz inequality and the fact that $Y_{k,l}$ is
orthonormal. Hence,
\[
\int_{0}^{\infty}\left|  f_{k,l}\left(  r\right)  \right|  r^{-k}d\sigma
_{k,l}\leq\sqrt{\omega_{d-1}}C\int_{0}^{\infty}\left(  1+r^{N}\right)
r^{-k}d\sigma_{k,l}.
\]
By assumption (\ref{neu21}) the latter integral exists, so $f_{k,l}\left(
r\right)  r^{-k}$ is integrable with respect to $\sigma_{k,l}.$ By summing
over all $k,l$ we obtain by (\ref{neu21}) that
\[
\sum_{k=0}^{\infty}\sum_{l=1}^{a_{k}}\left|  \int_{0}^{\infty}f_{k,l}\left(
r\right)  r^{-k}d\sigma_{k,l}\right|  <\infty,
\]
which implies the convergence of the series in (\ref{eqtschlange}). It follows
that $\widetilde{T}$ is well-defined.

2. Let $T_{0}$ be the restriction of the functional $\widetilde{T}$ to the
space $C_{c}\left(  \mathbb{R}^{d}\right)  $. We will show that $T_{0}$ is
continuous. Let $f\in C_{c}\left(  \mathbb{R}^{d}\right)  $ and suppose that
$f $ has support in the annulus $\left\{  x\in\mathbb{R}^{d}:\rho\leq\left|
x\right|  \leq R\right\}  $ (for the case $\rho=0$ this is a ball). Then by a
similar technique as above $\left|  f_{k,l}\left(  r\right)  \right|
\leq\sqrt{\omega_{d-1}}\max_{\rho\leq\left|  x\right|  \leq R}\left|  f\left(
x\right)  \right|  .$ Using (\ref{eqtschlange}) one arrives at
\begin{equation}
\left|  T_{0}\left(  f\right)  \right|  \leq\max_{\rho\leq\left|  x\right|
\leq R}\left|  f\left(  x\right)  \right|  \sqrt{\omega_{d-1}}\sum
_{k=0}^{\infty}\sum_{l=1}^{a_{k}}\int_{\rho}^{R}r^{-k}d\sigma_{k,l}.
\label{eqkey23}%
\end{equation}

3. First consider the case that all measures $\sigma_{k,l}$ have supports in
the interval $\left[  \rho,R\right]  $ with $R<\infty$ (cf. Remark
\ref{Rrep}). Then (\ref{eqkey23}) and the Riesz representation theorem for
compact spaces yield a representing measure $\mu$ with support in the annulus
$\left\{  x\in\mathbb{R}^{d}:\rho\leq\left|  x\right|  \leq R\right\}  .$
Clearly $\mu$ is a moment measure. The pseudo--positivity of $\mu$ will be
proved in item 5.) below.

4. In the case that $\sigma_{k,l}$ have supports in $\left[  0,\infty\right)
$, we apply the Riesz representation theorem given in \cite[p. 41, Theorem
2.5]{BCR84}: there exists a unique signed measure $\sigma$ such that
$T_{0}\left(  g\right)  =\int_{\mathbb{R}^{d}}gd\sigma$ for all $g\in
C_{c}\left(  \mathbb{R}^{d}\right)  .$ Next we will show that the polynomials
are integrable with respect to the variation of the representation measure
$\sigma.$ Let $\sigma=\sigma_{+}-\sigma_{-}$ be the Jordan decomposition of
$\sigma$. Following the techniques of Theorem 2.4 and Theorem 2.5 in \cite[p.
42]{BCR84}, we have the equality
\begin{equation}
\int_{\mathbb{R}^{d}}g\left(  x\right)  d\sigma_{+}=\sup\left\{  T_{0}\left(
h\right)  :h\in C_{c}\left(  \mathbb{R}^{d}\right)  \text{ with }0\leq h\leq
g\right\}  \label{defTnull}%
\end{equation}
which holds for any non-negative function $g\in C_{c}\left(  \mathbb{R}%
^{d}\right)  .$ Let $u_{R}$ be the cut-off function defined in (\ref{defuR}).
We want to estimate $\int_{\mathbb{R}^{d}}g\left(  x\right)  d\sigma_{+}$ for
the function $g:=\left|  x\right|  ^{N}u_{R}(\left|  x\right|  ^{2}).$ In view
of (\ref{defTnull}), let $h\in C_{c}\left(  \mathbb{R}^{d}\right)  $ with
$0\leq h\left(  x\right)  \leq\left|  x\right|  ^{N}u_{R}(\left|  x\right|
^{2})$ for all $x\in\mathbb{R}^{d}.$ Then for the Laplace-Fourier coefficient
$h_{k,l}$ of $h$ we have the estimate
\[
\left|  h_{k,l}\left(  r\right)  \right|  \leq\sqrt{\int_{\mathbb{S}^{d-1}%
}\left|  h\left(  r\theta\right)  \right|  ^{2}d\theta}\sqrt{\int
_{\mathbb{S}^{d-1}}\left|  Y_{k,l}\left(  \theta\right)  \right|  ^{2}d\theta
}\leq r^{N}u_{R}\left(  r^{2}\right)  \sqrt{\omega_{d-1}}.
\]
According to (\ref{eqtschlange})
\[
T_{0}\left(  h\right)  \leq\left|  T_{0}\left(  h\right)  \right|  \leq
\sqrt{\omega_{d-1}}\sum_{k=0}^{\infty}\sum_{l=1}^{a_{k}}\int_{0}^{\infty}%
r^{N}r^{-k}d\sigma_{k,l}=:D_{N}.
\]
From (\ref{defTnull}) it follows that $\int_{\mathbb{R}^{d}}\left|  x\right|
^{N}u_{R}(\left|  x\right|  ^{2})d\sigma_{+}\leq D_{N}$ for all $R>0$ (note
that $D_{N}$ does not depend on $R$ ). By the monotone convergence theorem
(note that $u_{R}\left(  x\right)  \leq u_{R+1}\left(  x\right)  $ for all
$x\in\mathbb{R}^{d})$ we obtain
\[
\int_{\mathbb{R}^{d}}\left|  x\right|  ^{N}d\sigma_{+}=\lim_{R\rightarrow
\infty}\int_{\mathbb{R}^{d}}\left|  x\right|  ^{N}u_{R}(\left|  x\right|
^{2})d\sigma_{+}\leq D_{N}.
\]
Similarly one shows that $\int_{\mathbb{R}^{d}}\left|  x\right|  ^{N}%
d\sigma_{-}<\infty$ by considering the functional $S=-T_{0}$. It follows that
all polynomials are integrable with respect to $\sigma_{+}$ and $\sigma_{-}$.
Using similar arguments it is not difficult to see that for all $g\in
C^{\times}\left(  \mathbb{R}^{d}\right)  \cap C_{pol}\left(  \mathbb{R}%
^{d}\right)  $
\begin{equation}
\int_{\mathbb{R}^{d}}g\left(  x\right)  d\sigma=\widetilde{T}\left(  g\right)
.\text{ } \label{eqidentmt}%
\end{equation}

5. It remains to prove that $\sigma$ is pseudo-positive. Let $h\in
C_{c}\left(  \left[  0,\infty\right)  \right)  $ be a non-negative function.
The Laplace-Fourier coefficients $f_{k^{\prime},l^{\prime}}$ of $f\left(
x\right)  :=h\left(  \left|  x\right|  \right)  Y_{k,l}\left(  x\right)  $ are
given by $f_{k^{\prime}l^{\prime}}\left(  r\right)  =\delta_{kk^{\prime}%
}\delta_{ll^{\prime}}h\left(  r\right)  r^{k}$ and by (\ref{eqidentmt}) it
follows that
\[
\int_{\mathbb{R}^{d}}h\left(  \left|  x\right|  \right)  Y_{k,l}\left(
x\right)  d\sigma=\widetilde{T}\left(  f\right)  =\int_{0}^{\infty}%
f_{k,l}\left(  r\right)  r^{-k}d\sigma_{k,l}=\int_{0}^{\infty}h\left(
r\right)  d\sigma_{k,l}.
\]
Since $\sigma_{k,l}$ are non-negative measures, the last term is non-negative,
thus $\sigma$ is pseudo-positive. The proof is complete.
\end{proof}

The following Theorem  is the main result of the present Section and   is an
immediate consequence of Theorem \ref{ThmRepG}. It provides a simple
sufficient condition for the pseudo-positive definite functional on
$\mathbb{C}\left[  x_{1},...,x_{d}\right]  $ defined in (\ref{defTTT}) to
possess a pseudo--positive representing measure. Let us note that not every
pseudo-positive definite functional has a pseudo-positive representing
measure, see Theorem \ref{Tnonexistence}.

\begin{theorem}
\label{CorMoment}Let $T:\mathbb{C}\left[  x_{1},...,x_{d}\right]
\rightarrow\mathbb{C}$ be a pseudo-positive definite functional. Let
$\sigma_{k,l},k\in\mathbb{N}_{0},$ $l=1,...,a_{k},$ be non-negative measures
with supports in $\left[  0,\infty\right)  $ representing the functional $T$
as obtained in Proposition \ref{PropStieltjes}. If for any $N\in\mathbb{N}%
_{0}$
\begin{equation}
\sum_{k=0}^{\infty}\sum_{l=1}^{a_{k}}\int_{0}^{\infty}r^{N}r^{-k}d\sigma
_{k,l}<\infty, \label{eqneuCN}%
\end{equation}
then there exists a pseudo-positive, signed moment measure $\sigma$ such that
\[
T\left(  f\right)  =\int fd\sigma\qquad\text{ for all }f\in\mathbb{C}\left[
x_{1},...,x_{d}\right]  .
\]
\end{theorem}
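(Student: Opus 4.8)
The plan is to deduce this directly by chaining Proposition \ref{PropStieltjes} with Theorem \ref{ThmRepG}, since the hypotheses have been arranged precisely so that the two results compose. First I would invoke Proposition \ref{PropStieltjes}: because $T$ is pseudo-positive definite, there are non-negative measures $\sigma_{k,l}$ supported in $\left[0,\infty\right)$ for which
\[
T\left(f\right)=\sum_{k=0}^{\deg f}\sum_{l=1}^{a_{k}}\int_{0}^{\infty}f_{k,l}\left(r\right)r^{-k}d\sigma_{k,l}
\]
holds for every polynomial $f$, where the $f_{k,l}$ are the Laplace-Fourier coefficients of $f$. These are exactly the measures named in the statement, and they are exactly the data that appear in the definition (\ref{defTTT}) of the functional treated in Theorem \ref{ThmRepG}.

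The next step is simply to observe that the summability hypothesis (\ref{eqneuCN}) imposed here is literally the condition (\ref{neu21}) required by Theorem \ref{ThmRepG}. Thus $T$ coincides with the functional built from the family $\left\{\sigma_{k,l}\right\}$ via (\ref{defTTT}), and this family satisfies (\ref{neu21}) for every $N\in\mathbb{N}_{0}$. Applying Theorem \ref{ThmRepG} then yields a pseudo-positive, signed moment measure $\sigma$ with $T\left(f\right)=\int_{\mathbb{R}^{d}}fd\sigma$ for all $f\in\mathbb{C}\left[x_{1},...,x_{d}\right]$, which is the assertion.

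I do not expect a genuine obstacle at this stage, since all of the analytic content has already been expended in the proof of Theorem \ref{ThmRepG}: the extension of $T$ to $C_{pol}\left(\mathbb{R}^{d}\right)$, the continuity estimate for its restriction to $C_{c}\left(\mathbb{R}^{d}\right)$, the appeal to the Riesz representation theorem, the verification that polynomials are integrable against $\sigma_{+}$ and $\sigma_{-}$, and the check of pseudo-positivity of $\sigma$ are all carried out there once and for all. The only point warranting a sentence of care is the identification itself: one should confirm that the measures $\sigma_{k,l}$ furnished abstractly by the Stieltjes representation of Proposition \ref{PropStieltjes} are the same objects to which the summability condition (\ref{eqneuCN}) is applied, so that Theorem \ref{ThmRepG} is invoked with a consistent choice of representing measures. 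Once this bookkeeping is in place, the corollary is immediate.
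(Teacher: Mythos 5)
Your argument is exactly the paper's: Theorem \ref{CorMoment} is stated there as an immediate consequence of Theorem \ref{ThmRepG}, obtained by feeding the representing measures $\sigma_{k,l}$ from Proposition \ref{PropStieltjes} (whose formula (\ref{eqTnicerep}) identifies $T$ with the functional (\ref{defTTT})) into Theorem \ref{ThmRepG}, with (\ref{eqneuCN}) matching (\ref{neu21}). The composition is correct and complete as you present it.
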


It would be interesting to see whether the summability condition
(\ref{eqneuCN}) may be weakened, cf. also the discussion at the end of Section
\ref{Sexamples}.

By the uniqueness of the representing measure in the Riesz representation
theorem for compact spaces we conclude from Theorem \ref{ThmRepG}:

\begin{corollary}
\label{ppp}Let $\mu$ be a signed measure with compact support. Then $\mu$ is
pseudo-positive if and only if $\mu$ is pseudo-positive definite as a
functional on $\mathbb{C}\left[  x_{1},...,x_{d}\right]  .$
\end{corollary}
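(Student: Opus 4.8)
The statement is an equivalence, so I would treat the two implications separately, writing $T\left(  P\right)  =\int_{\mathbb{R}^{d}}P\,d\mu$ for the functional attached to $\mu$ and using that $\operatorname{supp}\mu\subseteq\left\{  x:\left|  x\right|  \leq R\right\}  $ for some $R<\infty$. The direction \emph{pseudo-positive} $\Rightarrow$ \emph{pseudo-positive definite} is the easy one and I would settle it straight from the definitions. For a univariate polynomial $p$ one has $T_{k,l}\left(  p^{\ast}p\right)  =\int_{\mathbb{R}^{d}}\left|  p(\left|  x\right|  ^{2})\right|  ^{2}Y_{k,l}\left(  x\right)  d\mu$ and $T_{k,l}\left(  t\,p^{\ast}p\right)  =\int_{\mathbb{R}^{d}}\left|  x\right|  ^{2}\left|  p(\left|  x\right|  ^{2})\right|  ^{2}Y_{k,l}\left(  x\right)  d\mu$; both integrands have the form $h\left(  \left|  x\right|  \right)  Y_{k,l}\left(  x\right)  $ with $h\geq0$ continuous, and since $\mu$ has compact support I may multiply $h$ by a cut-off without changing the integral so that $h\in C_{c}\left[  0,\infty\right)  $. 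The defining inequality (\ref{defpspos}) then yields both estimates, hence $T$ is pseudo-positive definite.

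The substantive direction is \emph{pseudo-positive definite} $\Rightarrow$ \emph{pseudo-positive}. First I would invoke the proof of Proposition \ref{PropStieltjes} to obtain, for each $k,l$, a non-negative measure $\sigma_{k,l}$ on $\left[  0,\infty\right)  $ with $\int_{0}^{\infty}p\left(  r^{2}\right)  d\sigma_{k,l}\left(  r\right)  =T_{k,l}\left(  p\right)  $ for all univariate $p$. The crucial preliminary step is to show that each $\sigma_{k,l}$ is in fact supported in $\left[  0,R\right]  $. This follows from a geometric moment bound: $\left|  T_{k,l}\left(  t^{j}\right)  \right|  =\left|  \int\left|  x\right|  ^{2j}Y_{k,l}\,d\mu\right|  \leq R^{2j}\int\left|  Y_{k,l}\right|  d\left|  \mu\right|  =:R^{2j}C_{k,l}$, so that for any $\rho^{\prime}>R$ the elementary estimate $\left(  \rho^{\prime}\right)  ^{2j}\sigma_{k,l}\left(  \left(  \rho^{\prime},\infty\right)  \right)  \leq\int r^{2j}d\sigma_{k,l}=T_{k,l}\left(  t^{j}\right)  \leq R^{2j}C_{k,l}$ forces $\sigma_{k,l}\left(  \left(  \rho^{\prime},\infty\right)  \right)  =0$ as $j\to\infty$.

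Once the supports are known to be compact, I would finish by a uniqueness argument on $C\left[  0,R\right]  $. Consider the two functionals $M_{k,l}\left(  h\right)  :=\int_{\mathbb{R}^{d}}h\left(  \left|  x\right|  \right)  Y_{k,l}\left(  x\right)  d\mu$ and $h\mapsto\int_{0}^{\infty}h\,d\sigma_{k,l}$. Both are bounded on $C\left[  0,R\right]  $ (the first because $\mu$ is confined to $\left|  x\right|  \leq R$, the second because $\sigma_{k,l}\left(  \left[  0,R\right]  \right)  =T_{k,l}\left(  1\right)  <\infty$), and by construction they agree on the subalgebra of even polynomials $r\mapsto p\left(  r^{2}\right)  $. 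This subalgebra contains the constants, is self-adjoint, and separates points of $\left[  0,R\right]  $ since $r\mapsto r^{2}$ is injective there, so by Stone--Weierstrass it is dense in $C\left[  0,R\right]  $. The two bounded functionals therefore coincide --- this is precisely the uniqueness in the Riesz representation theorem on the compact space $\left[  0,R\right]  $ referred to just before the statement. Consequently $M_{k,l}\left(  h\right)  =\int_{0}^{\infty}h\,d\sigma_{k,l}\geq0$ for every non-negative $h$, which is exactly the pseudo-positivity of $\mu$.

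I expect the compact-support step for the $\sigma_{k,l}$ to be the main technical point, and it is also the reason I would route the argument through Riesz uniqueness rather than through Theorem \ref{ThmRepG} directly: the summability hypothesis (\ref{neu21}) need not hold for a general compactly supported pseudo-positive definite $\mu$, so one cannot reconstruct $\mu$ by that theorem and then match. Passing to the compact interval $\left[  0,R\right]  $, where the even polynomials become dense, is what makes the clean coincidence-of-functionals argument available and lets the conclusion rest only on the non-negativity of the $\sigma_{k,l}$.
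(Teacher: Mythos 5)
Your proof is correct, but it deliberately takes a different route from the paper, and the difference is worth spelling out. The paper disposes of the corollary in one line: apply Theorem \ref{ThmRepG} (in its compact-support form, cf.\ Remark \ref{Rrep}) to the measures $\sigma_{k,l}$ of Proposition \ref{PropStieltjes} to manufacture a pseudo-positive $\sigma$ with the same moments as $\mu$, and then invoke uniqueness in the Riesz representation theorem on the ball to conclude $\sigma=\mu$. You instead never reconstruct a global measure: you first pin down $\operatorname{supp}\sigma_{k,l}\subseteq\left[0,R\right]$ by the Chebyshev/moment-growth estimate, and then identify the two functionals $h\mapsto\int_{\mathbb{R}^{d}}h\left(\left|x\right|\right)Y_{k,l}\left(x\right)d\mu$ and $h\mapsto\int_{0}^{\infty}h\,d\sigma_{k,l}$ on $C\left[0,R\right]$ via Stone--Weierstrass density of the even polynomials, reading off pseudo-positivity componentwise. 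Your stated reason for avoiding Theorem \ref{ThmRepG} is well taken and is in fact a point the paper glosses over: the compact-support case of that theorem still requires $C_{0}=\sum_{k,l}\int r^{-k}d\sigma_{k,l}<\infty$ to get the bound (\ref{eqkey23}), and the Poisson-kernel example of Section \ref{S2d} is a compactly supported pseudo-positive (hence pseudo-positive definite) measure for which precisely this sum diverges. So the paper's route, as literally stated, needs the extra summability hypothesis or some additional argument, whereas your componentwise Riesz-uniqueness argument covers the general compactly supported case. The support-localization step (that bounded moment growth forces $\sigma_{k,l}$ into $\left[0,R\right]$, and incidentally makes each $T_{k,l}$ determinate) is exactly the right technical insertion, and the easy direction via cut-offs matches what the definitions require.
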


Let us remark that Corollary \ref{ppp} does not hold without the compactness
assumption which follows from well known arguments in the univariate case:
Indeed, let $\nu_{1}$ be a non-negative moment measure on $\left[
0,\infty\right)  $ which is not determined in the sense of Stieltjes; hence
there exists a non-negative moment measure $\nu_{2}$ on $\left[
0,\infty\right)  $ such that $\nu_{1}\left(  p\right)  =\nu_{2}\left(
p\right)  $ for all univariate polynomials. Since $\nu_{1}\neq\nu_{2}$ there
exists a continuous function $h:\left[  0,\infty\right)  \rightarrow\left[
0,\infty\right)  $ with compact support that $\nu_{1}\left(  h\right)  \neq
\nu_{2}\left(  h\right)  .$ Without loss of generality assume that
\begin{equation}
\int_{0}^{\infty}h\left(  r\right)  d\nu_{1}-\int_{0}^{\infty}h\left(
r\right)  d\nu_{2}<0. \label{eqverschieden}%
\end{equation}
For $i=1,2$ define $\mu_{i}=d\theta d\nu_{i},$ so for any $f\in C\left(
\mathbb{R}^{d}\right)  $ of polynomial growth
\[
\int fd\mu_{i}=\int_{0}^{\infty}\int_{\mathbb{S}^{d-1}}f\left(  r\theta
\right)  d\theta d\nu_{i}.
\]
For a polynomial $f$ let $f_{0}$ be the first Laplace--Fourier coefficient.
Then $\int fd\mu_{i}=\int_{0}^{\infty}f_{0}\left(  r\right)  d\nu_{i}$ for
$i=1,2.$ Since $\nu_{1}\left(  p\right)  =\nu_{2}\left(  p\right)  $ for all
univariate polynomials it follows that $\int fd\mu_{1}=\int fd\mu_{2}$ for all
polynomials. Then $\mu:=\mu_{1}-\mu_{2}$ is a signed measure which is
pseudo-positive definite since $\mu\left(  P\right)  =0$ for all polynomials
$P.$ It is not pseudo-positive since $\mu_{0}\left(  h\right)  =\int h\left(
\left|  x\right|  \right)  d\mu<0$ by (\ref{eqverschieden}).

\section{\label{STruncated}The truncated moment problem for pseudo-positive
definite functionals}

The classical \emph{truncated moment problem} of order $2n-1$ for a sequence
of real numbers $s_{0},s_{1},s_{2},...$ asks for conditions providing the
existence of a non-negative measure $\sigma_{n}$ on the real line such that
\begin{equation}
s_{k}=\int_{-\infty}^{\infty}t^{k}d\sigma_{n}\left(  t\right)  \text{ for
}k=0,...,2n-1,\label{skmom}%
\end{equation}
cf. \cite[p. 30]{Akhi65}. Let $\mathcal{P}_{\leq m}$ denote the space of all
univariate polynomials of degree $\leq m,$ and let us associate to the numbers
$s_{0},...,s_{2n}$ the linear functional $T_{n}:\mathcal{P}_{\leq
2n}\rightarrow\mathbb{R}$ defined by
\[
T_{n}\left(  t^{k}\right)  :=s_{k}\text{ for }k=0,...,2n.
\]
A necessary and sufficient condition for the existence of a non-negative
measure $\sigma_{n}$ on the real line satisfying (\ref{skmom}) is that $T_{n}$
is \emph{positive definite on }$\mathcal{P}_{\leq2n}$which means that
\[
T_{n}\left(  p^{\ast}\left(  t\right)  p\left(  t\right)  \right)  \geq0\text{
for all }p\in\mathcal{P}_{\leq n},
\]
see \cite[p. 30]{Akhi65}. Moreover, if $T_{n}$ is strictly positive definite
on $\mathcal{P}_{\leq2n}$ (i.e. that $T_{n}\left(  p^{\ast}\left(  t\right)
p\left(  t\right)  \right)  >0$ for all $p\in\mathcal{P}_{\leq n},p\neq0)$
then one can find a whole continuum of solutions to the truncated problem of
order $2n-1.$ 

A classical argument based on the Helly theorem shows that the  solutions
$\sigma_{n}$ of the truncated moment problem of order $2n-1$ for
$n\in\mathbb{N}_{0}$ converge to a solution $\sigma$ of the moment problem.
For a discussion of  truncated multivariate moment problems we refer to
\cite{CuFi00} and \cite{Stoc01}.

We now formulate a truncated moment problem in our framework. A basic question
is of course which moments are assumed to be known. Our formulation will
depend on two parameters, namely $n\in\mathbb{N}_{0}$ and $k_{0}\in
\mathbb{N}_{0}\cup\left\{  \infty\right\}  .$ We define the space
$U_{n}\left(  k_{0}\right)  $ as the set of all polynomials $f\in
\mathbb{C}\left[  x_{1},...,x_{d}\right]  $ such that the Laplace-Fourier
series (cf. (\ref{gauss}))
\[
f\left(  x\right)  =\sum_{k=0}^{\deg f}\sum_{l=1}^{a_{k}}p_{k,l}(\left|
x\right|  ^{2})Y_{k,l}\left(  x\right)
\]
satisfies the restriction
\[
\deg p_{k,l}\leq n\text{ for }k=0,...,k_{0}\text{ and }p_{k,l}=0\text{ for all
}k\in\mathbb{N}_{0}\text{ with }k>k_{0}.
\]
A functional $T_{n}:U_{2n}\left(  k_{0}\right)  \rightarrow\mathbb{C}$ is
called \emph{pseudo-positive definite with respect to the orthonormal basis}
$Y_{k,l},l=1,...,a_{k}$, $k\in\mathbb{N}_{0},$ $k\leq k_{0},$ if the component
functionals $T_{n,k,l}:\mathcal{P}_{\leq2n}\rightarrow\mathbb{C}$ defined by
\[
T_{n,k,l}\left(  p\right)  :=T_{n}\left(  p\left(  \left|  x\right|
^{2}\right)  Y_{k,l}\left(  x\right)  \right)  \text{ for }p\in\mathcal{P}%
_{\leq2n}%
\]
satisfy
\begin{align}
T_{n,k,l}\left(  p^{\ast}p\right)   &  \geq0\text{ for all }p\in
\mathcal{P}_{\leq n},\label{eq1}\\
T_{n,k,l}\left(  t\cdot p^{\ast}\left(  t\right)  p\left(  t\right)  \right)
&  \geq0\text{ for all }p\in\mathcal{P}_{\leq n-1}. \label{eq2}%
\end{align}
If $k_{0}<\infty$, the space $U_{n}\left(  k_{0}\right)  $ is obviously
finite-dimensional and in this case we can solve the truncated moment problem:

\begin{theorem}
\label{ThmTT}Suppose that $n$ and $k_{0}$ are natural numbers. If
$T_{n}:U_{2n}\left(  k_{0}\right)  \rightarrow\mathbb{C}$ is pseudo-positive
definite with respect to the orthonormal basis $Y_{k,l},l=1,...,a_{k}$,
$k\in\mathbb{N}_{0}$ then there exists a pseudo-positive measure $\sigma$ such
that
\[
T_{n}\left(  P\right)  =\int P\left(  x\right)  d\sigma\left(  x\right)
\]
for all $P\in U_{2n-1}\left(  k_{0}\right)  .$
\end{theorem}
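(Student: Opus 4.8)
The plan is to reduce the $d$-dimensional truncated problem to a finite family of one-dimensional truncated Stieltjes problems, one for each pair $(k,l)$ with $k\le k_{0}$, and then to reassemble the one-dimensional solutions into a measure on $\mathbb{R}^{d}$ using the machinery of Section \ref{Smomentproblem}. Since $k_{0}<\infty$ and each $a_{k}$ is finite, there are only finitely many component functionals $T_{n,k,l}$ to treat, so every summation over $(k,l)$ that appears is finite and no convergence issue arises; this is exactly why the summability assumption (\ref{neu21}) will come for free here, in contrast to the infinite case of Theorem \ref{ThmRepG}.

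First I would apply the classical truncated Stieltjes moment problem to each $T_{n,k,l}$. The two inequalities (\ref{eq1}) and (\ref{eq2}) are precisely positive-definiteness of the Hankel form together with that of its shifted companion, i.e. exactly the solvability conditions for a non-negative representing measure supported in $[0,\infty)$. Hence for each $(k,l)$ with $0\le k\le k_{0}$ there is a non-negative measure $\mu_{k,l}$ on $[0,\infty)$ with
\[
T_{n,k,l}(p)=\int_{0}^{\infty}p(t)\,d\mu_{k,l}(t)\qquad\text{for all }p\in\mathcal{P}_{\le 2n-1}.
\]
I would moreover use the standard fact that the truncated problem always admits a \emph{finitely atomic} solution; selecting such solutions and letting $R$ be the maximum of the (finitely many) atoms over all pairs $(k,l)$, every $\mu_{k,l}$ is then supported in the compact interval $[0,R]$.

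Next I would reassemble. Put $\sigma_{k,l}:=\mu_{k,l}^{\varphi}$ with $\varphi(t)=\sqrt{t}$ (the image measure, as in Proposition \ref{PropStieltjes}) for $k\le k_{0}$, and set $\sigma_{k,l}:=0$ for $k>k_{0}$. By Proposition \ref{PropTTT} the functional $f\mapsto\sum_{k,l}\int_{0}^{\infty}f_{k,l}(r)r^{-k}\,d\sigma_{k,l}$ is pseudo-positive definite, and since only finitely many $\sigma_{k,l}$ are nonzero and all have compact support, (\ref{neu21}) holds trivially; Theorem \ref{ThmRepG} together with Remark \ref{Rrep}(3) then yields a pseudo-positive signed moment measure $\sigma$, supported in a ball, whose induced radial measure attached to $Y_{k,l}$ is exactly $\sigma_{k,l}$. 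To verify that $\sigma$ represents $T_{n}$ on $U_{2n-1}(k_{0})$, I would take $P\in U_{2n-1}(k_{0})$ in its Gauss decomposition $P(x)=\sum_{k=0}^{k_{0}}\sum_{l=1}^{a_{k}}p_{k,l}(|x|^{2})Y_{k,l}(x)$ with $\deg p_{k,l}\le 2n-1$, and compute each summand as $\int_{\mathbb{R}^{d}}p_{k,l}(|x|^{2})Y_{k,l}(x)\,d\sigma=\int_{0}^{\infty}p_{k,l}(t)\,d\mu_{k,l}(t)=T_{n,k,l}(p_{k,l})=T_{n}(p_{k,l}(|x|^{2})Y_{k,l}(x))$, using the induced-measure identity for $\sigma$, the relation $\sigma_{k,l}=\mu_{k,l}^{\varphi}$, and the matching above. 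Summing the finitely many terms and invoking linearity gives $\int P\,d\sigma=T_{n}(P)$, the components with $k>k_{0}$ contributing zero on both sides.

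The delicate point — and the step I expect to be the main obstacle — is the behaviour at the origin for $k\ge 1$, concealed in the weight $r^{-k}$. The reassembly through Theorem \ref{ThmRepG} requires $\int_{0}^{\infty}r^{-k}\,d\sigma_{k,l}<\infty$, which fails as soon as $\mu_{k,l}$ charges $t=0$; and in fact \emph{any} pseudo-positive measure on $\mathbb{R}^{d}$ has vanishing $(k,l)$-radial mass at $r=0$ when $k\ge1$, since $Y_{k,l}$ vanishes at the origin. Consequently the one-dimensional solutions for $k\ge1$ must be chosen with all atoms in $(0,\infty)$. I would therefore take care, among the finitely atomic truncated-Stieltjes solutions, to select for each $k\ge1$ a representation (a principal representation of the problem on $[0,\infty)$ not charging the endpoint) whose atoms avoid the origin; securing this choice, and the attendant integrability against $r^{-k}$, is the real content of the argument, everything else being the bookkeeping already developed in Section \ref{Smomentproblem}.
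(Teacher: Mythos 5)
Your strategy is the same as the paper's: solve a truncated Stieltjes problem for each component functional $T_{n,k,l}$ with $k\le k_{0}$ by a finitely atomic (Gauss--Jacobi) measure, observe that with only finitely many nonzero, compactly supported $\sigma_{k,l}$ the summability condition (\ref{neu21}) holds automatically, and reassemble via Theorem \ref{ThmRepG}; your second and third paragraphs match the paper's proof step for step. However, you have correctly located the one step that carries all the weight --- for $k\ge 1$ the atomic measure must not charge $t=0$, since otherwise $\int_{0}^{\infty}r^{-k}\,d\sigma_{k,l}=\infty$ and Theorem \ref{ThmRepG} does not apply --- and then you explicitly decline to prove it (``securing this choice \dots is the real content of the argument''). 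As it stands this is a genuine gap, not a deferred routine verification. The paper closes it as follows: in the degenerate case where some $p_{m}\in\mathcal{P}_{\le n}$, $p_{m}\neq 0$, of minimal degree $m$ satisfies $T_{n,k,l}(p_{m}^{\ast}p_{m})=0$, it takes the Gauss--Jacobi nodes of $T_{n,k,l}$ restricted to $\mathcal{P}_{\le 2m}$, uses the Cauchy--Schwarz inequality $\left|T_{n,k,l}(q\,p_{m})\right|^{2}\le T_{n,k,l}(q^{\ast}q)\,T_{n,k,l}(p_{m}^{\ast}p_{m})=0$ to upgrade exactness from $\mathcal{P}_{\le 2m-1}$ to $\mathcal{P}_{\le 2n-1}$, and asserts that condition (\ref{eq2}) forces the smallest node $t_{1,k,l}$ to be strictly positive.

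Your suspicion that this is the delicate point is well founded, and you should be aware that it cannot be repaired merely by a cleverer selection among the atomic solutions. Consider the component functional $T_{n,k,l}(p)=p(0)$ on $\mathcal{P}_{\le 2n}$ for some $k\ge 1$: it satisfies (\ref{eq1}) and (\ref{eq2}) (the latter with value $0$), yet every non-negative measure on $[0,\infty)$ matching its moments up to order $2n-1$ must satisfy $\int t\,d\mu=0$ and $\int 1\,d\mu=1$, hence equals $\delta_{0}$. So in the degenerate case the smallest Gauss--Jacobi node can equal $0$, no admissible choice of atoms avoids the origin, and --- by the same argument as in Theorem \ref{Tnonexistence} --- no pseudo-positive measure can represent such a component. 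In other words, the obstacle you flagged is not only left unproved in your sketch; it marks a degenerate configuration that the hypotheses (\ref{eq1})--(\ref{eq2}) do not by themselves exclude, so a complete argument must either rule it out (for instance by assuming strict positivity of the forms in (\ref{eq1}), or by requiring that the minimal kernel polynomial not vanish at $0$ when $k\ge1$) or treat it separately.
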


\begin{proof}
Let $k\in\left\{  0,...,k_{0}\right\}  $ and let $T_{n,k,l}:\mathcal{P}%
_{\leq2n}\rightarrow\mathbb{C}$ be the component functional. In the first case
assume that there exists a polynomial $p_{m}\in\mathcal{P}_{\leq n},p_{m}%
\neq0$ with $T_{n,k,l}\left(  p_{m}^{\ast}p_{m}\right)  =0.$ We may assume
that $p_{m}$ has minimal degree, say $m\leq n.$ Then $T_{n,k,l}\left(
p^{\ast}p\right)  >0$ for all $p\in\mathcal{P}_{\leq m-1},p\neq0$. Using the
Gauss-Jacobi quadrature for the functional $T_{n,k,l}$ restricted to
$\mathcal{P}_{\leq2m}$ it follows that there exist points $t_{1,k,l}%
<...<t_{m,k,l}\in\mathbb{R}$ and weights $\alpha_{1,k,l},...,\alpha_{m,k,l}>0$
such that the measure $\sigma_{k,l}:=\alpha_{1,k,l}\delta_{t_{1,k,l}%
}+...+\alpha_{m,k,l}\delta_{t_{m,k,l}}$ coincides with $T_{n,k,l}$ on
$\mathcal{P}_{\leq2m-1}.$ Moreover, condition (\ref{eq2}) implies that
$t_{1,k,l}>0.$ By the Cauchy-Schwarz inequality we have for all $q\in
\mathcal{P}_{\leq2n-m}$
\[
\left|  T_{n,k,l}\left(  q\cdot p_{m}\left(  t\right)  \right)  \right|
^{2}\leq T_{n,k,l}\left(  q^{\ast}q\right)  T_{n,k,l}\left(  p_{m}^{\ast}%
p_{m}\right)  =0.
\]
It follows that $T_{n,k,l}$ and $\sigma_{k,l}$ coincide on $\mathcal{P}%
_{\leq2n-1}.$ Hence we have proved that there exists a non-negative moment
measure $\sigma_{k,l}$ with support in $\left[  0,\infty\right)  $ such that
$T_{n,k,l}\left(  p\right)  =\int_{0}^{\infty}p\left(  t\right)  d\sigma
_{k,l}\left(  t\right)  $ for all $p\in\mathcal{P}_{\leq2n-1},$ and (since
$t_{1,k,l}>0$ )
\begin{equation}
\int_{0}^{\infty}r^{-k}d\sigma_{k,l}<\infty. \label{eqfinite}%
\end{equation}
In the second case, we have $T_{n,k,l}\left(  p^{\ast}p\right)  >0$ for all
$p\in\mathcal{P}_{\leq n},p\neq0$. Using the Gauss-Jacobi quadrature again one
obtains a non-negative moment measure $\sigma_{k,l}$ with support in $\left[
0,\infty\right)  $ such that $T_{n,k,l}\left(  p\right)  =\int_{0}^{\infty
}p\left(  t\right)  d\sigma_{k,l}\left(  t\right)  $ for all $p\in
\mathcal{P}_{\leq2n-1},$ satisfying (\ref{eqfinite}).

Let $\sigma_{k,l}$ for $k=0,...,k_{0}$ be as above and define $\sigma_{k,l}=0$
for $k>k_{0}$. Define a functional $T:\mathbb{C}\left[  x_{1},...,x_{d}%
\right]  \rightarrow\mathbb{C}$ by
\[
T\left(  f\right)  :=\sum_{k=0}^{\deg f}\sum_{l=1}^{a_{k}}\int_{0}^{\infty
}f_{k,l}\left(  r\right)  r^{-k}d\sigma_{k,l}.
\]
By Theorem \ref{ThmRepG} (note that the summability condition is satisfied)
there exists a pseudo-positive moment measure $\sigma$ with the same moments
as $T.$ The proof is accomplished by the fact that $T$ and $T_{n}$ agree on
the subspace $U_{2n-1}\left(  k_{0}\right)  .$
\end{proof}

Now we consider the case $k_{0}=\infty$, so the space $U_{n}\left(
k_{0}\right)  $ is infinite-dimensional. Using the same method of proof one obtains:

\begin{theorem}
\label{TrepTruncated}Suppose that $n$ is a natural number and that
$T_{n}:U_{2n}\left(  \infty\right)  \rightarrow\mathbb{C}$ is pseudo-positive
definite with respect to the orthonormal basis $Y_{k,l},l=1,...,a_{k}$,
$k\in\mathbb{N}_{0}.$ Assume that the non-negative measures $\sigma_{k,l}$
constructed in the proof of Theorem \ref{ThmTT} satisfy the following
conditions
\[
C_{N}:=\sum_{k=0}^{\infty}\sum_{l=1}^{a_{k}}\int_{0}^{\infty}r^{N}%
r^{-k}d\sigma_{k,l}<\infty
\]
for any $N\in\mathbb{N}_{0}.$ Then there exists a pseudo-positive, signed
moment measure $\sigma$ such that
\[
T\left(  f\right)  =\int_{\mathbb{R}^{n}}fd\sigma\text{ for all }f\in
U_{2n-1}\left(  \infty\right)  .
\]
\end{theorem}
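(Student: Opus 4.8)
The plan is to follow the proof of Theorem \ref{ThmTT} essentially verbatim, the only genuinely new ingredient being the summability hypothesis, which in the case $k_0=\infty$ can no longer be obtained for free and must instead be fed into Theorem \ref{ThmRepG}. I would first observe that the construction of the component measures $\sigma_{k,l}$ in the proof of Theorem \ref{ThmTT} is carried out for each pair $(k,l)$ in isolation and never uses the finiteness of $k_0$; hence it applies unchanged for every $k\in\mathbb{N}_0$ and $l=1,\dots,a_k$.

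So, for each $(k,l)$ I would apply the Gauss--Jacobi quadrature to the component functional $T_{n,k,l}:\mathcal{P}_{\leq 2n}\rightarrow\mathbb{C}$, distinguishing the two cases (whether or not some $p_m^{\ast}p_m$ is annihilated) exactly as before. This produces a non-negative, finitely atomic moment measure $\sigma_{k,l}$ supported in $[0,\infty)$ with $T_{n,k,l}(p)=\int_0^\infty p\,d\sigma_{k,l}$ for all $p\in\mathcal{P}_{\leq 2n-1}$, and condition (\ref{eq2}) again forces the smallest node to be strictly positive, so that $\int_0^\infty r^{-k}\,d\sigma_{k,l}<\infty$ as in (\ref{eqfinite}). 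I would then define the functional $T$ on $\mathbb{C}[x_1,\dots,x_d]$ by the usual formula (\ref{defTTT}); by Proposition \ref{PropTTT} it is pseudo-positive definite, and since for any fixed polynomial the sum over $k$ terminates at $\deg f$, the functional $T$ is well defined regardless of summability.

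At this point the hypothesis enters. Since by assumption $C_N<\infty$ for every $N\in\mathbb{N}_0$, Theorem \ref{ThmRepG} applies directly to the family $\{\sigma_{k,l}\}$ and yields a pseudo-positive signed moment measure $\sigma$ with $T(f)=\int_{\mathbb{R}^d}f\,d\sigma$ for all polynomials $f$. It then remains to identify $T$ with $T_n$ on $U_{2n-1}(\infty)$. For $f\in U_{2n-1}(\infty)$ the Gauss components $p_{k,l}$ all have degree $\leq 2n-1$, so $f_{k,l}(r)r^{-k}=p_{k,l}(r^2)$ is matched against $\sigma_{k,l}$ by the very construction of the latter, giving $T(f)=\sum_{k,l}T_{n,k,l}(p_{k,l})=T_n(f)$. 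Combining the two identities yields $T_n(f)=\int_{\mathbb{R}^d}f\,d\sigma$ for all $f\in U_{2n-1}(\infty)$, which is the assertion.

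The only real obstacle, relative to Theorem \ref{ThmTT}, is that the finiteness of the double series is no longer automatic: with infinitely many nonzero $\sigma_{k,l}$ one must verify (or, as here, assume outright) the full summability (\ref{neu21}) before Theorem \ref{ThmRepG} can be invoked. All the analytic heavy lifting --- extending $T$ to $C_{pol}(\mathbb{R}^d)$, the Riesz representation for non-compact supports, integrability of polynomials against $\sigma_{\pm}$, and the pseudo-positivity of $\sigma$ --- is already packaged inside Theorem \ref{ThmRepG}, so once summability is in hand the remaining argument is a short bookkeeping of degrees.
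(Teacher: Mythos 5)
Your proposal is correct and follows exactly the route the paper intends: the paper gives no separate proof of this theorem, remarking only that "the same method of proof" as Theorem \ref{ThmTT} applies, i.e., construct each $\sigma_{k,l}$ by Gauss--Jacobi quadrature as before, note that the summability condition must now be assumed rather than being automatic, and invoke Theorem \ref{ThmRepG}. Your bookkeeping of degrees and your identification of the one genuinely new ingredient (the hypothesis $C_{N}<\infty$) match the paper's intent precisely.
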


\begin{remark}
Let us note that (in the case $k_{0}=\infty$ ) the space $U_{2n}\left(
\infty\right)  $ coincides with the set of all polynomials $h$ which are
polyharmonic of order $n+1$, i.e. satisfy $\Delta^{n+1}h=0$ , where $\Delta$
is the Laplace operator and $\Delta^{j}$ is its $j$th iterate. Apparently for
the first time such representing measures have been considered more
systematically in \cite{schulzewildenhain}. In the case $n=0$ the problem we
consider is equivalent to the inverse magnetic problem, cf. \cite{zidarov}.
\end{remark}

\section{Determinacy for pseudo-positive definite
functionals\label{Sdeterminacy}}

Let $M^{\ast}\left(  \mathbb{R}^{d}\right)  $ be the set of all \emph{signed
moment measures}, and $M_{+}^{\ast}\left(  \mathbb{R}^{d}\right)  $ be the set
of \emph{non--negative moment measures} on $\mathbb{R}^{d}$. On $M^{\ast
}\left(  \mathbb{R}^{d}\right)  $ we define an equivalence relation: we say
that $\sigma\sim\mu$ \ for two elements $\sigma,\mu\in M^{\ast}\left(
\mathbb{R}^{d}\right)  $ if and only if $\int_{\mathbb{R}^{d}}fd\sigma
=\int_{\mathbb{R}^{d}}fd\mu$ for all $f\in\mathbb{C}\left[  x_{1}%
,...,x_{d}\right]  .$

\begin{definition}
\label{Ddetermined}Let $\mu\in$ $M^{\ast}\left(  \mathbb{R}^{d}\right)  $ be a
pseudo-positive measure. We define
\[
V_{\mu}=\left\{  \sigma\in M^{\ast}\left(  \mathbb{R}^{d}\right)
:\sigma\text{ is pseudo-positive and }\sigma\sim\mu\right\}  .
\]
We say that the measure $\mu\in$ $M^{\ast}\left(  \mathbb{R}^{d}\right)  $ is
\emph{determined in the class of pseudo-positive measures} if $V_{\mu}$ has
only one element, i.e. is equal to $\left\{  \mu\right\}  .$
\end{definition}

Recall that a positive definite functional $\phi:\mathcal{P}_{1}%
\rightarrow\mathbb{R}$ is \emph{determined in the sense of Stieltjes} if the
set
\begin{equation}
W_{\phi}^{Sti}:=\left\{  \tau\in M_{+}^{\ast}\left(  \left[  0,\infty\right)
\right)  :\int_{0}^{\infty}r^{m}d\tau=\phi\left(  r^{m}\right)  \text{ for all
}m\in\mathbb{N}_{0}\right\}  \label{defES}%
\end{equation}
has exactly one element, cf. \cite[p. 210]{BeTh91}.

According to Proposition \ref{pseudopos}, we can associate to a
pseudo-positive measure $\mu$ the sequence of non-negative measures $\mu
_{k,l},k\in\mathbb{N}_{0},l=1,..,a_{k}$ with support in $\left[
0,\infty\right)  .$ The measures $\mu_{k,l}$ contain all information about
$\mu.$ Indeed, we prove

\begin{proposition}
\label{soso}Let $\mu$ and $\sigma$ be pseudo-positive measures and let
$\mu_{k,l}$ and $\sigma_{k,l}$ be as in Proposition \ref{pseudopos}. If
$\mu_{k,l}=\sigma_{k,l}$ for all $k\in\mathbb{N}_{0},l=1,..,a_{k}$ then
$\mu=\sigma.$
\end{proposition}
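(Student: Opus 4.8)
The plan is to reduce everything to the fact that a finite signed Radon measure on $\mathbb{R}^{d}$ is determined by its integrals against $C_{c}\left(  \mathbb{R}^{d}\right)  $, and then to exhibit a determining class of test functions on which $\mu$ and $\sigma$ are already known to agree. First I would observe that a moment measure is automatically \emph{finite}: applying the integrability requirement in the definition of a moment measure to the constant polynomial $1$ shows that $\mu^{+}$ and $\mu^{-}$, and likewise $\sigma^{+}$ and $\sigma^{-}$, have finite total mass. Hence $\left|  \mu\right|  $ and $\left|  \sigma\right|  $ are finite (Radon) measures, and it suffices to prove $\int_{\mathbb{R}^{d}}\phi\,d\mu=\int_{\mathbb{R}^{d}}\phi\,d\sigma$ for every $\phi\in C_{c}\left(  \mathbb{R}^{d}\right)  $.

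Next, the hypothesis $\mu_{k,l}=\sigma_{k,l}$ together with the defining identity (\ref{eqlim}) gives, for every $h\in C_{c}\left[  0,\infty\right)  $ and all $k,l$, that $\int_{\mathbb{R}^{d}}h\left(  \left|  x\right|  \right)  Y_{k,l}\left(  x\right)  d\mu=\int_{0}^{\infty}h\,d\mu_{k,l}=\int_{0}^{\infty}h\,d\sigma_{k,l}=\int_{\mathbb{R}^{d}}h\left(  \left|  x\right|  \right)  Y_{k,l}\left(  x\right)  d\sigma$. Thus $\mu$ and $\sigma$ agree on the linear span $\mathcal{A}$ of all functions $h\left(  \left|  x\right|  \right)  Y_{k,l}\left(  x\right)  $ with $h\in C_{c}\left[  0,\infty\right)  $; equivalently one may invoke the last formula of Proposition \ref{pseudopos}, since every such function lies in $C^{\times}\left(  \mathbb{R}^{d}\right)  \cap C_{pol}\left(  \mathbb{R}^{d}\right)  $.

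The heart of the argument is then to show that $\mathcal{A}$ is dense in $C_{c}\left(  \mathbb{R}^{d}\right)  $ for the supremum norm, with control on supports. Writing $Y_{k,l}\left(  x\right)  =r^{k}Y_{k,l}\left(  \theta\right)  $ and allowing the radial factor $h$ to vary freely in $C_{c}\left[  0,\infty\right)  $, the restriction of $\mathcal{A}$ to each sphere of radius $r>0$ is the full span of the spherical harmonics, which is uniformly dense in $C\left(  \mathbb{S}^{d-1}\right)  $ by Stone--Weierstrass (the spherical harmonics of all degrees are exactly the restrictions of polynomials, a point-separating subalgebra containing the constants). A standard tensor-product argument on $\left(  0,\infty\right)  \times\mathbb{S}^{d-1}\cong\mathbb{R}^{d}\setminus\left\{  0\right\}  $ then yields uniform approximation on annuli, and since multiplying an element of $\mathcal{A}$ by a radial cut-off such as $u_{R}$ from (\ref{defuR}) keeps it in $\mathcal{A}$, the supports of the approximants can be confined to a fixed ball. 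Granting this, for $\phi\in C_{c}\left(  \mathbb{R}^{d}\right)  $ supported in $\left\{  \left|  x\right|  \leq R\right\}  $ I would choose $\psi_{n}\in\mathcal{A}$ with $\psi_{n}\to\phi$ uniformly and all supports in $\left\{  \left|  x\right|  \leq R+1\right\}  $; then $\left|  \int\phi\,d\mu-\int\psi_{n}\,d\mu\right|  \leq\left\|  \phi-\psi_{n}\right\|  _{\infty}\left|  \mu\right|  \left(  \left\{  \left|  x\right|  \leq R+1\right\}  \right)  \to0$ and similarly for $\sigma$, while $\int\psi_{n}\,d\mu=\int\psi_{n}\,d\sigma$ for every $n$. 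Therefore $\int\phi\,d\mu=\int\phi\,d\sigma$ for all $\phi\in C_{c}\left(  \mathbb{R}^{d}\right)  $, and $\mu=\sigma$.

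I expect the density step to be the only real obstacle, and within it two points need care: the behaviour at the origin, where every term with $k\geq1$ vanishes, so $\phi$ near $0$ must be matched using the $k=0$ (radial) part of $\mathcal{A}$ --- this succeeds because $\phi$ is continuous at the origin and the constants belong to $\mathcal{A}$ --- and the requirement that the spherical-harmonic approximation be made locally uniform in the radial variable while keeping all supports inside one fixed compact set. Everything else is bookkeeping with finite measures and the monotone and dominated convergence theorems.
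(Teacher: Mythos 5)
Your proof is correct and follows essentially the same route as the paper: both arguments reduce the claim, via (\ref{eqlim}) and the hypothesis $\mu_{k,l}=\sigma_{k,l}$, to the identity $\int_{\mathbb{R}^{d}}h\left(  \left|  x\right|  \right)  Y_{k,l}\left(  x\right)  d\mu=\int_{\mathbb{R}^{d}}h\left(  \left|  x\right|  \right)  Y_{k,l}\left(  x\right)  d\sigma$ for all $h\in C_{c}\left[  0,\infty\right)  $, and then conclude because the span of such functions (i.e. $C^{\times}\left(  \mathbb{R}^{d}\right)  \cap C_{c}\left(  \mathbb{R}^{d}\right)  $) is a determining class for signed measures. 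The only difference is that the paper delegates this final density/determinacy step to Proposition \ref{Ldensity} (quoted from Berg and Thill), whereas you sketch a direct Stone--Weierstrass proof of it; your sketch is sound and correctly isolates the two delicate points, namely the behaviour at the origin and the control of supports.
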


\begin{proof}
Let $h\in C_{c}\left[  0,\infty\right)  .$ Then, using the assumption
$\mu_{k,l}=\sigma_{k,l},$ we obtain
\[
\int_{\mathbb{R}^{d}}h\left(  \left|  x\right|  \right)  Y_{k,l}\left(
x\right)  d\mu=\int_{0}^{\infty}h\left(  t\right)  d\mu_{k,l}=\int
_{\mathbb{R}^{d}}h\left(  \left|  x\right|  \right)  Y_{k,l}\left(  x\right)
d\sigma.
\]
Since each $f\in C^{\times}\left(  \mathbb{R}^{d}\right)  \cap C_{c}\left(
\mathbb{R}^{d}\right)  $ is a finite linear combination of functions of the
type $h\left(  \left|  x\right|  \right)  Y_{k,l}\left(  x\right)  $ with
$h\in C_{c}\left[  0,\infty\right)  $, we obtain that $\int_{\mathbb{R}^{d}%
}fd\mu=\int_{\mathbb{R}^{d}}fd\sigma$ for all $f\in C^{\times}\left(
\mathbb{R}^{d}\right)  \cap C_{c}\left(  \mathbb{R}^{d}\right)  .$ We apply
Proposition \ref{Ldensity} to see that $\mu$ is equal to $\sigma.$
\end{proof}

The following result is proved in \cite[Proposition 3.1]{BeTh91}:

\begin{proposition}
\label{Ldensity} Let $\mu$ and $\sigma$ be signed measures on $\mathbb{R}%
^{d}.$ If $\int_{\mathbb{R}^{d}}fd\mu=\int_{\mathbb{R}^{d}}fd\sigma$ for all
$f\in C^{\times}\left(  \mathbb{R}^{d}\right)  \cap C_{c}\left(
\mathbb{R}^{d}\right)  ,$ then $\mu$ is equal to $\sigma.$
\end{proposition}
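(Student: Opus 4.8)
The statement asserts that the space $C^{\times}\left(\mathbb{R}^{d}\right)\cap C_{c}\left(\mathbb{R}^{d}\right)$ is rich enough to determine a signed measure. The plan is to show that this space is uniformly dense in $C_{c}\left(\mathbb{R}^{d}\right)$ and then to invoke the uniqueness part of the Riesz representation theorem. I first recall that a signed measure in the present sense takes real values and is $\sigma$-additive, hence has finite total variation $\left|\mu\right|\left(\mathbb{R}^{d}\right)<\infty$; consequently $\left|\int g\,d\mu\right|\leq\left\|g\right\|_{\infty}\left|\mu\right|\left(\mathbb{R}^{d}\right)$ for every bounded Borel function $g$, so that $g\mapsto\int g\,d\mu$ is continuous on $C_{c}\left(\mathbb{R}^{d}\right)$ for the supremum norm.

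The heart of the matter is that $C^{\times}\left(\mathbb{R}^{d}\right)$ is a subalgebra of $C\left(\mathbb{R}^{d}\right)$. Indeed, given two generators $h_{1}\left(\left|x\right|\right)Y_{k,l}\left(x\right)$ and $h_{2}\left(\left|x\right|\right)Y_{k^{\prime},l^{\prime}}\left(x\right)$, their product equals $h_{1}\left(\left|x\right|\right)h_{2}\left(\left|x\right|\right)Y_{k,l}\left(x\right)Y_{k^{\prime},l^{\prime}}\left(x\right)$; since $Y_{k,l}Y_{k^{\prime},l^{\prime}}$ is a polynomial, the Gauss representation (\ref{gaussrepresentation}) expresses it as a finite sum $\sum q_{m,l^{\prime\prime}}\left(\left|x\right|^{2}\right)Y_{m,l^{\prime\prime}}\left(x\right)$, so the product again lies in $C^{\times}\left(\mathbb{R}^{d}\right)$. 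The real-valued elements of $C^{\times}\left(\mathbb{R}^{d}\right)$ thus form a real subalgebra containing the constants (take $k=0$, $h\equiv1$) and, through the degree-one solid harmonics, the coordinate functions $x_{1},\dots,x_{d}$; hence this algebra separates the points of $\mathbb{R}^{d}$.

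Now fix a real-valued $g\in C_{c}\left(\mathbb{R}^{d}\right)$ with support contained in an open ball $B_{R}$, pick $R^{\prime}>R$, and apply the Stone-Weierstrass theorem on the compact ball $\overline{B}_{R^{\prime}}$: there is a real-valued $f\in C^{\times}\left(\mathbb{R}^{d}\right)$ with $\left\|f-g\right\|_{\infty,\overline{B}_{R^{\prime}}}<\varepsilon$. To restore compact support, choose a radial cut-off $\chi\left(\left|x\right|\right)$ with $\chi=1$ on $\overline{B}_{R}$, support in $B_{R^{\prime}}$, and $0\leq\chi\leq1$; since $\chi$ is radial it lies in $C^{\times}\left(\mathbb{R}^{d}\right)$, so $\chi f\in C^{\times}\left(\mathbb{R}^{d}\right)\cap C_{c}\left(\mathbb{R}^{d}\right)$. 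On $\overline{B}_{R}$ one has $\left|\chi f-g\right|=\left|f-g\right|<\varepsilon$; on $B_{R^{\prime}}\setminus\overline{B}_{R}$ one has $g=0$ and $\left|\chi f\right|\leq\left|f\right|=\left|f-g\right|<\varepsilon$; elsewhere both vanish. Thus $\left\|\chi f-g\right\|_{\infty}<\varepsilon$, proving that $C^{\times}\left(\mathbb{R}^{d}\right)\cap C_{c}\left(\mathbb{R}^{d}\right)$ is uniformly dense in $C_{c}\left(\mathbb{R}^{d}\right)$.

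Combining density with the continuity estimate of the first paragraph, the hypothesis $\int f\,d\mu=\int f\,d\sigma$ on $C^{\times}\left(\mathbb{R}^{d}\right)\cap C_{c}\left(\mathbb{R}^{d}\right)$ passes to the limit and yields $\int g\,d\mu=\int g\,d\sigma$ for every $g\in C_{c}\left(\mathbb{R}^{d}\right)$ (it suffices to treat real-valued $g$, since $\mu$ and $\sigma$ are real); the uniqueness assertion of the Riesz representation theorem for signed measures, as in \cite[p. 41]{BCR84}, then forces $\mu=\sigma$. The step I expect to be most delicate is making the Stone-Weierstrass approximation compatible with compact support, that is, the support-preserving cut-off together with the closure of $C^{\times}\left(\mathbb{R}^{d}\right)$ under multiplication; the algebra property itself rests squarely on the finiteness of the Laplace-Fourier expansion of a polynomial guaranteed by (\ref{gaussrepresentation}).
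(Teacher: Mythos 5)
Your argument is correct, but note that the paper does not actually prove Proposition \ref{Ldensity}: it is quoted verbatim from \cite[Proposition 3.1]{BeTh91}, so there is no internal proof to match. What you supply is a self-contained Stone--Weierstrass argument, and its two load-bearing steps both check out: (i) $C^{\times}\left(\mathbb{R}^{d}\right)$ is a conjugation-closed subalgebra of $C\left(\mathbb{R}^{d}\right)$, because the product $Y_{k,l}Y_{k^{\prime},l^{\prime}}$ is a polynomial and the Gauss representation (\ref{gaussrepresentation}) rewrites it as a \emph{finite} sum $\sum q_{m,l^{\prime\prime}}\left(\left|x\right|^{2}\right)Y_{m,l^{\prime\prime}}\left(x\right)$, so the product of two generators again has a finite Laplace--Fourier series with continuous radial coefficients; (ii) the algebra contains the constants (via $Y_{0,1}$) and the coordinate functions (since $\mathcal{H}_{1}\left(\mathbb{R}^{d}\right)$ is exactly the space of linear forms), hence separates points, and the radial cut-off $\chi\left(\left|x\right|\right)$ lies in $C^{\times}\left(\mathbb{R}^{d}\right)$ and restores compact support without damaging the uniform estimate. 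Combined with the finiteness of the total variation (which the paper builds into its definition of signed measure via the Jordan decomposition) and the uniqueness part of the Riesz theorem in \cite[p.\ 41]{BCR84}, the conclusion follows. The approach in \cite{BeTh91} likewise rests on the uniform density of finite Laplace--Fourier sums in $C_{c}\left(\mathbb{R}^{d}\right)$, so your route is essentially the natural one; its advantage here is that it makes the paper self-contained at the cost of half a page, whereas the citation keeps the exposition short.
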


We can characterize $V_{\mu}$ in the case that only finitely many $\mu_{k,l}$
are nonzero.

\begin{theorem}
Let $\mu$ be a pseudo-positive measure on $\mathbb{R}^{n}$ such that
$\mu_{k,l}=0$ for all $k>k_{0},l=1,...,a_{k}.$ Then $V_{\mu}$ is affinely
isomorphic to the set
\begin{equation}
\oplus_{k=0}^{k_{0}}\oplus_{l=1}^{a_{k}}\{\rho_{k,l}\in W_{\mu_{k,l}^{\psi}%
}^{Sti}:\int_{0}^{\infty}t^{-\frac{1}{2}k}d\rho_{k,l}<\infty\}
\label{descript}%
\end{equation}
where the isomorphism is given by $\sigma\longmapsto\left(  \sigma_{k,l}%
^{\psi}\right)  _{k=1,..,k_{0},l=1,...,a_{k}}$ and the map $\psi:\left[
0,\infty\right)  \rightarrow\left[  0,\infty\right)  $ is defined by
$\psi\left(  t\right)  =t^{2},$ cf. (\ref{imagemeasure1}).
\end{theorem}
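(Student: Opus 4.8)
The plan is to exhibit the assignment $\Phi:\sigma\mapsto(\sigma_{k,l}^{\psi})$ as a well-defined bijection onto the set (\ref{descript}) which is the restriction of a linear map, and then to read off the affine isomorphism. First I would set up the dictionary between $V_{\mu}$ and the target. To a pseudo-positive $\sigma$ Proposition \ref{pseudopos} attaches the non-negative measures $\sigma_{k,l}$ on $\left[0,\infty\right)$. Taking $h\left(t\right)=t^{2j}$ in (\ref{eqlim}) gives $\int_{0}^{\infty}r^{2j}d\sigma_{k,l}=\int_{\mathbb{R}^{d}}\left|x\right|^{2j}Y_{k,l}\left(x\right)d\sigma$, and since $\{\left|x\right|^{2j}Y_{k,l}\left(x\right)\}$ is a basis by the Gauss representation, the relation $\sigma\sim\mu$ is equivalent to $\int_{0}^{\infty}r^{2j}d\sigma_{k,l}=\int_{0}^{\infty}r^{2j}d\mu_{k,l}$ for all $j,k,l$. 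Passing to image measures under $\psi\left(t\right)=t^{2}$, this reads $\int_{0}^{\infty}t^{j}d\sigma_{k,l}^{\psi}=\int_{0}^{\infty}t^{j}d\mu_{k,l}^{\psi}$ for all $j$, i.e. exactly $\sigma_{k,l}^{\psi}\in W_{\mu_{k,l}^{\psi}}^{Sti}$. Because $\mu_{k,l}=0$ for $k>k_{0}$ the measure $\mu_{k,l}^{\psi}$ vanishes and $W_{0}^{Sti}=\{0\}$, forcing $\sigma_{k,l}=0$ there, so $\Phi\left(\sigma\right)$ is indeed supported on the indices $k\leq k_{0}$.

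Second, and this is the main technical point, I would verify the integrability constraint $\int_{0}^{\infty}t^{-k/2}d\sigma_{k,l}^{\psi}<\infty$ defining (\ref{descript}). Writing $\lambda$ for the image of the finite measure $\left|\sigma\right|$ under $x\mapsto\left|x\right|$ and $c_{k,l}:=\max_{\theta\in\mathbb{S}^{d-1}}\left|Y_{k,l}\left(\theta\right)\right|$, the bound $\int h\,d\sigma_{k,l}=\int h\left(\left|x\right|\right)Y_{k,l}\left(x\right)d\sigma\leq c_{k,l}\int h\left(r\right)r^{k}d\lambda$, valid for all non-negative $h\in C_{c}\left[0,\infty\right)$, yields the domination $d\sigma_{k,l}\leq c_{k,l}\,r^{k}d\lambda$ of measures. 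Hence $\int_{0}^{\infty}t^{-k/2}d\sigma_{k,l}^{\psi}=\int_{0}^{\infty}r^{-k}d\sigma_{k,l}\leq c_{k,l}\,\left|\sigma\right|\left(\mathbb{R}^{d}\right)<\infty$, so $\Phi$ lands in (\ref{descript}). Injectivity is immediate: the image under the homeomorphism $\psi$ determines $\sigma_{k,l}$, whereupon Proposition \ref{soso} recovers $\sigma$.

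Third, for surjectivity I would start from a tuple $\left(\rho_{k,l}\right)$ in (\ref{descript}), put $\tilde{\rho}_{k,l}:=\rho_{k,l}^{\varphi}$ with $\varphi\left(t\right)=\sqrt{t}$ (so that $\tilde{\rho}_{k,l}^{\psi}=\rho_{k,l}$), and $\tilde{\rho}_{k,l}=0$ for $k>k_{0}$. Then $\int_{0}^{\infty}r^{N-k}d\tilde{\rho}_{k,l}=\int_{0}^{\infty}t^{\left(N-k\right)/2}d\rho_{k,l}<\infty$ for every $N$, using the splitting $t^{\left(N-k\right)/2}\leq t^{-k/2}+t^{N/2}$. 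As the sum runs over the finitely many nonzero indices $k\leq k_{0}$, the hypothesis (\ref{neu21}) of Theorem \ref{ThmRepG} holds, so the functional $T\left(f\right)=\sum_{k=0}^{k_{0}}\sum_{l=1}^{a_{k}}\int_{0}^{\infty}f_{k,l}\left(r\right)r^{-k}d\tilde{\rho}_{k,l}$ admits a pseudo-positive signed moment measure $\sigma$. By Remark \ref{Rrep}.3 its induced measures satisfy $\sigma_{k,l}=\tilde{\rho}_{k,l}$, whence $\Phi\left(\sigma\right)=\left(\rho_{k,l}\right)$ and $\sigma\in V_{\mu}$ by the dictionary of the first step.

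Finally I would observe that $\Phi$ is the restriction of the linear map $\sigma\mapsto\left(\sigma_{k,l}^{\psi}\right)$, that both $V_{\mu}$ and (\ref{descript}) are convex, and that the inverse constructed above also preserves convex combinations; this upgrades the bijection to an affine isomorphism. The genuine obstacle is the integrability estimate of the second paragraph together with the careful matching of the induced measures $\sigma_{k,l}$ of Proposition \ref{pseudopos} with those furnished by Theorem \ref{ThmRepG} (Remark \ref{Rrep}.3) across the two changes of variable $t=r^{2}$ and $r=\sqrt{t}$.
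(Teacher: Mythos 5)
Your proposal is correct and follows essentially the same route as the paper: the same map $\sigma\mapsto(\sigma_{k,l}^{\psi})$, the same key integrability estimate (your domination $d\sigma_{k,l}\leq c_{k,l}\,r^{k}d\lambda$ is just a repackaging of the paper's bound $\int_{0}^{\infty}t^{-k/2}d\sigma_{k,l}^{\psi}=\int Y_{k,l}(x/|x|)\,d\sigma\leq M\,|\sigma|(\mathbb{R}^{d})$), Theorem \ref{ThmRepG} for surjectivity, and Proposition \ref{soso} for injectivity. Your added care about verifying the summability condition for all $N$ and about the affine structure only makes explicit what the paper leaves implicit.
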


\begin{proof}
Let $\sigma$ be in $V_{\mu}.$ Let $\sigma_{k,l}$ and $\mu_{k,l}$ be the unique
moment measures obtained in Proposition \ref{pseudopos}. Then
\[
\int_{0}^{\infty}h\left(  t\right)  d\sigma_{k,l}^{\psi}=\int_{0}^{\infty
}h\left(  t^{2}\right)  d\sigma_{k,l}=\int_{\mathbb{R}^{n}}h(\left|  x\right|
^{2})Y_{k,l}\left(  x\right)  d\sigma\left(  x\right)
\]
for all $h\in C_{pol}\left[  0,\infty\right)  ,$ and an analog equation is
valid for $\mu_{k,l}$ and $\mu.$ Taking polynomials $h\left(  t\right)  $ we
see that $\sigma_{k,l}\in W_{\mu_{k,l}^{\psi}}^{Sti}$ using the assumption
that $\mu\sim\sigma.$ Using a simple approximation argument it is easy to see
from (\ref{eqlim}) that
\[
\int_{0}^{\infty}t^{-\frac{1}{2}k}d\sigma_{k,l}^{\psi}=\int_{\mathbb{R}^{n}%
}Y_{k,l}\left(  \frac{x}{\left|  x\right|  }\right)  d\sigma\left(  x\right)
.
\]
Since $x\longmapsto Y_{k,l}\left(  \frac{x}{\left|  x\right|  }\right)  $ is
bounded on $\mathbb{R}^{n}$, say by $M,$ we obtain the estimate
\[
\left|  \int_{0}^{\infty}t^{-\frac{1}{2}k}d\sigma_{k,l}^{\psi}\right|  \leq
M\int_{\mathbb{R}^{n}}1d\left|  \sigma\right|  <\infty.
\]
It follows that $\left(  \sigma_{k,l}^{\psi}\right)  _{k=1,..,k_{0}%
,l=1,...,a_{k}}$ is contained in the set on the right hand side in
(\ref{descript}).

Let now $\rho_{k,l}\in W_{\mu_{k,l}^{\psi}}^{Sti}$ be given such that
$\int_{0}^{\infty}t^{-\frac{1}{2}k}d\rho_{k,l}<\infty$ for $k=1,..,k_{0}%
,l=1,...,a_{k}.$ Define $\sigma_{k,l}=\rho_{k,l}^{\psi^{-1}}$ and
$\sigma_{k,l}=0$ for $k>k_{0}$. Then by Theorem \ref{ThmRepG} there exists a
measure $\tau\in V_{\mu}$ such that $\tau_{k,l}=\sigma_{k,l}.$ This shows the
surjectivity of the map. Let now $\sigma$ and $\tau$ are in $V_{\mu}$ with
$\sigma_{k,l}^{\psi}=\tau_{k,l}^{\psi}$ for $k=1,..,k_{0},l=1,...,a_{k}.$ The
property $\sigma\in V_{\mu}$ implies that $\sigma_{k,l}^{\psi}\in W_{\mu
_{k,l}^{\psi}}^{Sti}$ for all $k\in\mathbb{N}_{0},l=1,...,a_{k},$ hence
$\sigma_{k,l}^{\psi}=0$ for $k>k_{0},$ and similarly $\tau_{k,l}^{\psi}=0.$
Hence $\sigma_{k,l}=\tau_{k,l}$ for all $k\in\mathbb{N}_{0},l=1,...,a_{k},$
and this implies that $\sigma=\tau$ by Proposition \ref{soso}.
\end{proof}

The following is a sufficient condition for a functional $T$ to be determined
in the class of pseudo-positive measures.

\begin{theorem}
\label{ThmUniq}Let $T:\mathbb{C}\left[  x_{1},...,x_{d}\right]  \rightarrow
\mathbb{R}$ be a pseudo-positive definite functional. If the functionals
$T_{k,l}:\mathbb{C}\left[  x_{1}\right]  \rightarrow\mathbb{C}$ are determined
in the sense of Stieltjes then there exists at most one pseudo-positive,
signed moment measure $\mu$ on $\mathbb{R}^{d}$ with
\begin{equation}
T\left(  f\right)  =\int_{\mathbb{R}^{d}}fd\mu\qquad\text{ for all }%
f\in\mathbb{C}\left[  x_{1},...,x_{d}\right]  . \label{eqTident}%
\end{equation}
\end{theorem}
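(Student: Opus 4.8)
The plan is to reduce the asserted uniqueness to the Stieltjes determinacy of the component functionals $T_{k,l}$ together with Proposition \ref{soso}. Suppose $\mu$ and $\sigma$ are two pseudo-positive signed moment measures both representing $T$. By Proposition \ref{pseudopos} each of them gives rise to uniquely determined non-negative moment measures $\mu_{k,l}$ and $\sigma_{k,l}$ on $\left[0,\infty\right)$, and Proposition \ref{soso} guarantees that it suffices to prove $\mu_{k,l}=\sigma_{k,l}$ for every $k\in\mathbb{N}_{0}$ and $l=1,\dots,a_{k}$.

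First I would connect the radial measures $\mu_{k,l}$ to the Stieltjes data of $T_{k,l}$. Introduce the bijection $\psi:\left[0,\infty\right)\rightarrow\left[0,\infty\right)$, $\psi\left(t\right)=t^{2}$, and the image measure $\mu_{k,l}^{\psi}$ as in (\ref{imagemeasure1}). For a univariate polynomial $p$ I apply Proposition \ref{pseudopos} to the polynomially bounded function $h\left(t\right):=p\left(t^{2}\right)$; since $\mu$ represents $T$ this yields
\[
T_{k,l}\left(p\right)=\int_{\mathbb{R}^{d}}p\left(\left|x\right|^{2}\right)Y_{k,l}\left(x\right)d\mu=\int_{0}^{\infty}p\left(t^{2}\right)d\mu_{k,l}\left(t\right)=\int_{0}^{\infty}p\left(s\right)d\mu_{k,l}^{\psi}\left(s\right).
\]
In particular, taking $p\left(t\right)=t^{m}$ shows that $\mu_{k,l}^{\psi}$ is a non-negative moment measure on $\left[0,\infty\right)$ whose power moments coincide with $T_{k,l}\left(t^{m}\right)$, i.e. $\mu_{k,l}^{\psi}\in W_{T_{k,l}}^{Sti}$. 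The identical computation applied to $\sigma$ gives $\sigma_{k,l}^{\psi}\in W_{T_{k,l}}^{Sti}$.

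Now the hypothesis that each $T_{k,l}$ is determined in the sense of Stieltjes forces $W_{T_{k,l}}^{Sti}$ to be a singleton, so $\mu_{k,l}^{\psi}=\sigma_{k,l}^{\psi}$. Since $\psi$ is a homeomorphism of $\left[0,\infty\right)$, the map $\nu\mapsto\nu^{\psi}$ is injective on measures (one recovers $\nu$ from $\nu^{\psi}$ through the inverse $\psi^{-1}\left(s\right)=\sqrt{s}$), whence $\mu_{k,l}=\sigma_{k,l}$ for all $k,l$. Proposition \ref{soso} then yields $\mu=\sigma$, which is exactly the claimed uniqueness. The only genuinely delicate point is the bookkeeping in this change of variables: Proposition \ref{pseudopos} is phrased in the radial variable $\left|x\right|$, whereas $T_{k,l}$ is defined through $\left|x\right|^{2}$, and it is precisely the substitution $\psi\left(t\right)=t^{2}$ that reconciles the two normalizations and turns the radial measures $\mu_{k,l}$ into bona fide Stieltjes representing measures for $T_{k,l}$; once this is set up correctly, the remaining arguments are routine.
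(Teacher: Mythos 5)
Your argument is correct and follows essentially the same route as the paper's own proof: test against $f=\left|x\right|^{2N}Y_{k,l}\left(x\right)$ (equivalently, against $p(\left|x\right|^{2})Y_{k,l}(x)$), pass to the image measures $\mu_{k,l}^{\psi}$, $\sigma_{k,l}^{\psi}$ under $\psi(t)=t^{2}$, invoke Stieltjes determinacy of each $T_{k,l}$ to conclude $\mu_{k,l}=\sigma_{k,l}$, and finish with Proposition \ref{soso}. No substantive differences.
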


\begin{proof}
Let us suppose that $\mu$ and $\sigma$ are pseudo-positive, signed moment
measures on $\mathbb{R}^{d}$ representing $T.$ Taking $f=\left|  x\right|
^{2N}Y_{k,l}\left(  x\right)  $ we obtain from (\ref{eqTident}) that
\[
\int_{\mathbb{R}^{d}}\left|  x\right|  ^{2N}Y_{k,l}\left(  x\right)
d\mu=T_{k,l}\left(  t^{N}\right)  =\int_{\mathbb{R}^{d}}\left|  x\right|
^{2N}Y_{k,l}\left(  x\right)  d\sigma.
\]
for all $N\in\mathbb{N}_{0}.$ Let $\mu_{k,l}$ and $\sigma_{k,l}$ as in
Proposition \ref{pseudopos}, and consider $\psi:\left[  0,\infty\right)
\rightarrow\left[  0,\infty\right)  $ defined by $\psi\left(  t\right)
=t^{2}$. Then the image measures $\mu_{k,l}^{\psi}$ and $\sigma_{k,l}^{\psi}$
are non-negative measures with supports on $\left[  0,\infty\right)  $ such
that $\int_{0}^{\infty}t^{N}d\mu_{k,l}^{\psi}=T_{k,l}\left(  t^{N}\right)
=\int_{0}^{\infty}t^{N}d\sigma_{k,l}^{\psi}.$ Our assumption implies that
$\mu_{k,l}^{\psi}=\sigma_{k,l}^{\psi}$, so $\mu_{k,l}=\sigma_{k,l}$.
Proposition \ref{soso} implies that $\mu$ is equal to $\sigma$.
\end{proof}

In the following we want to prove the converse of the last theorem, which is
more subtle. We need now some special results about \emph{Nevanlinna extremal
measures}. Let us introduce the following notation: for a non-negative measure
$\phi\in M_{+}^{\ast}\left(  \mathbb{R}\right)  $ we put\footnote{Here in
order to avoid mixing of the notations, we retain the notation $\left[
\phi\right]  $ from the one--dimensional case in \cite{BeTh91}.}
\[
\left[  \phi\right]  :=\left\{  \sigma\in M_{+}^{\ast}\left(  \mathbb{R}%
\right)  :\text{ }\sigma\sim\phi\right\}  .
\]

\begin{proposition}
\label{PropSD} Let $\nu$ be a non-negative moment measure on $\mathbb{R}$ with
support in $\left[  0,\infty\right)  $ which is not determined in the sense of
Stieltjes, or applying the notation (\ref{defES}) $W_{\nu}^{Sti}\neq\left\{
\nu\right\}  .$ Then there exist uncountably many $\sigma\in W_{\nu}^{Sti}$
such that $\int_{0}^{\infty}u^{-k}d\sigma<\infty$ for all $k\in\mathbb{N}_{0}.$
\end{proposition}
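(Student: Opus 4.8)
The plan is to realise the required measures as \emph{Nevanlinna extremal} (N-extremal) solutions of the moment problem for $\nu$ and to exploit the fact that such measures are discrete. Since $\nu$ is not determined in the sense of Stieltjes, the associated moment problem is indeterminate, and the Nevanlinna theory (in its Stieltjes refinement) provides an uncountable family $\{\mu_{t}\}_{t}$ of N-extremal representing measures supported in $\left[0,\infty\right)$, i.e. with $\mu_{t}\sim\nu$. Recall that every N-extremal measure is purely atomic and that its support is the real zero set of an entire function of order at most one; in particular this support is a discrete set accumulating only at $+\infty$. Each $\mu_{t}$ is therefore a finite measure, whose total mass equals the zeroth moment $\int_{0}^{\infty}1\,d\nu$, and whose support is a sequence tending to infinity.

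The key structural input is that \emph{distinct} N-extremal measures have \emph{disjoint} supports. This follows from the normalization $A\left(z\right)D\left(z\right)-B\left(z\right)C\left(z\right)=1$ of the Nevanlinna matrix: a point $x_{0}$ lies in $\operatorname{supp}\mu_{t}$ exactly when $B\left(x_{0}\right)t-D\left(x_{0}\right)=0$, and since $B$ and $D$ can have no common zero, this equation determines $t$ uniquely for each fixed $x_{0}$. Applying this with $x_{0}=0$, the point $0$ can belong to the support of at most one member of the family $\{\mu_{t}\}$. Discarding that single exceptional parameter still leaves uncountably many $\mu_{t}$ with $0\notin\operatorname{supp}\mu_{t}$.

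For each such $\mu_{t}$ the support is a discrete subset of $\left(0,\infty\right)$ accumulating only at $+\infty$, so $\varepsilon_{t}:=\inf\operatorname{supp}\mu_{t}>0$. Consequently $u^{-k}\leq\varepsilon_{t}^{-k}$ on $\operatorname{supp}\mu_{t}$ for every $k$, and therefore
\[
\int_{0}^{\infty}u^{-k}\,d\mu_{t}\leq\varepsilon_{t}^{-k}\,\mu_{t}\bigl(\left[0,\infty\right)\bigr)<\infty\qquad\left(k\in\mathbb{N}_{0}\right).
\]
Since every $\mu_{t}\in W_{\nu}^{Sti}$, this exhibits uncountably many measures with the desired property.

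The step I expect to be the main obstacle is the very first one: producing \emph{uncountably many} N-extremal solutions that are genuinely supported in $\left[0,\infty\right)$ (Stieltjes solutions), rather than merely in $\mathbb{R}$. This is exactly where Stieltjes indeterminacy is used beyond Hamburger indeterminacy, and it rests on the Stieltjes version of the Nevanlinna parametrization: the admissible parameters $t$ for which all zeros of $B\left(\cdot\right)t-D\left(\cdot\right)$ remain nonnegative fill a nondegenerate interval, whose interior yields the uncountable family above (and, incidentally, whose interior parameters already force $\inf\operatorname{supp}\mu_{t}>0$). Once this family is secured, the disjointness and finiteness arguments are routine.
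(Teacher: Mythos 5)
Your proof is correct, but it takes a genuinely different route from the paper's. You invoke the Stieltjes refinement of the Nevanlinna parametrization directly: Stieltjes indeterminacy of $\nu$ yields a nondegenerate interval of parameters $t$ whose N-extremal measures $\mu_{t}$ are supported in $\left[0,\infty\right)$; the identity $A(z)D(z)-B(z)C(z)=1$ shows $B$ and $D$ have no common zero, hence distinct $\mu_{t}$ have disjoint supports, so at most one of them has $0$ in its support; discarding that one, each remaining $\mu_{t}$ is a finite measure with $\inf\operatorname{supp}\mu_{t}>0$, which gives $\int_{0}^{\infty}u^{-k}\,d\mu_{t}<\infty$. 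The paper avoids the Stieltjes-specific parametrization entirely: following Chihara and Pedersen it symmetrizes via $x\mapsto x^{2}$, identifying $W_{\nu}^{Sti}$ with the symmetric measures in the Hamburger class of $\widetilde{\nu}$; it then takes the ordinary N-extremal measures $\sigma_{t}$ of $\widetilde{\nu}$, observes that $\sigma_{t}$ vanishes near $0$ for $t\neq0$, uses the parity of the Nevanlinna matrix entries to form the symmetric averages $\rho_{t}=\frac{1}{2}\left(\sigma_{t}+\sigma_{-t}\right)$, and pushes these forward to $\left[0,\infty\right)$. So the paper's family consists of images of averages of N-extremal solutions of the symmetrized problem, whereas yours consists of genuine N-extremal solutions of the original problem on $\left[0,\infty\right)$. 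Your argument is shorter and your disjoint-supports observation is a clean way to dispose of the one parameter whose measure charges $0$, but it leans on the deeper fact (which you correctly flag as the crux and would need to reference precisely, e.g.\ that the admissible parameters form an interval $\left[\alpha,0\right]$ with $\alpha<0$ exactly when the Stieltjes problem is indeterminate); the paper's route needs only Akhiezer's Hamburger-case formulas plus the explicitly cited symmetrization correspondence. Both are legitimate proofs.
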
%

\proof
In the proof we will borrow some arguments about the Stieltjes problem as
given in \cite{Chih82} or \cite{Pede95}. As in the proof of Proposition 4.1 in
\cite{Pede95} let $\varphi:\left(  -\infty,\infty\right)  \rightarrow\left[
0,\infty\right)  $ be defined by $\varphi\left(  x\right)  =x^{2}.$ If
$\lambda$ is a measure on $\mathbb{R}$ define a measure $\lambda^{-}$ by
$\lambda^{-}\left(  A\right)  :=\lambda\left(  -A\right)  $ for each Borel set
$A$ where $-A:=\left\{  -x:x\in A\right\}  .$ The measure is symmetric if
$\lambda^{-}=\lambda.$ For each $\tau\in W_{\nu}^{Sti}$ define a measure
$\widetilde{\tau}:=\frac{1}{2}\left(  \tau^{\varphi}+\left(  \tau^{\varphi
}\right)  ^{-}\right)  $ which is clearly symmetric, in particular
$\widetilde{\nu}$ is symmetric. As pointed out in \cite{Pede95}, the map
$\widetilde{\cdot}:W_{\nu}^{Sti}\rightarrow\left[  \widetilde{\nu}\right]  \ $
is injective and the image is exactly the set of all symmetric measures in the
set $\left[  \widetilde{\nu}\right]  .$ The inverse map of $\widetilde{\cdot}$
defined on the image space is just the map $\sigma\rightarrow\sigma^{\varphi}.$

It follows that $\widetilde{v}$ is not determined, so we can make use of the
Nevanlinna theory for the indeterminate measure $\widetilde{\nu},$ see p. 54
in \cite{Akhi65}. We know by formula II.4.2 (9) and II.4.2 (10) in
\cite{Akhi65} that for every $t\in\mathbb{R}$ there exists a unique
Nevanlinna--extremal measure $\sigma_{t}$ such that
\[
\int_{-\infty}^{\infty}\frac{d\sigma_{t}\left(  u\right)  }{u-z}%
=-\frac{A\left(  z\right)  t-C\left(  z\right)  }{B\left(  z\right)
t-D\left(  z\right)  },
\]
where $A\left(  z\right)  ,B\left(  z\right)  ,C\left(  z\right)  ,D\left(
z\right)  $ are entire functions. Since the support of $\sigma_{t}$ is the
zero-set of the entire function $B\left(  z\right)  t-D\left(  z\right)  $ it
follows that the measure $\sigma_{t}$ has no mass in $0$ for $t\neq0,$ and now
it is clear that $\sigma_{t}([-\delta,\delta])=0$ for $t\neq0$ and suitable
$\delta>0$ (this fact is pointed out at least in the reference \cite[p.
210]{BeTh91}). It follows that
\begin{equation}
\int_{-\infty}^{\infty}\left|  u\right|  ^{-k}d\sigma_{t}<\infty
\label{eqbounded}%
\end{equation}
since the function $u\longmapsto\left|  u\right|  ^{-k}$ is bounded on
$\mathbb{R}\setminus\left[  -\delta,\delta\right]  $ for each $\delta>0$.
Using the fact that the functions $A\left(  z\right)  $ and $B\left(
z\right)  $ of the Nevanlinna matrix are odd, while the functions $B\left(
z\right)  $ and $C\left(  z\right)  $ are even, one derives that the measure
$\rho_{t}:=\frac{1}{2}\sigma_{t}+\frac{1}{2}\sigma_{-t}$ is symmetric. Further
from the equation $A\left(  z\right)  D\left(  z\right)  -B\left(  z\right)
C\left(  z\right)  =1$ it follows that $\rho_{t}\neq\rho_{s}$ for positive
numbers $t\neq s.$ By the above we know that $\rho_{t}^{\varphi}\neq\rho
_{s}^{\varphi}$. This finishes the proof.
\endproof

\begin{theorem}
Let $\mu$ be a pseudo-positive signed measure on $\mathbb{R}^{d}$ such that
the summability assumption (\ref{maincond2}) holds. Then $V_{\mu}$ contains
exactly one element if and only if each $\mu_{k,l}^{\psi}$ is determined in
the sense of Stieltjes.
\end{theorem}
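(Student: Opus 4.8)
The plan is to prove the two implications separately, handling the ``if'' direction by a direct appeal to Theorem~\ref{ThmUniq} and the ``only if'' direction by a perturbation argument. First I would record the identification that makes the statement line up with the earlier results: with $\psi(t)=t^{2}$, the measure $\mu_{k,l}^{\psi}$ is exactly the Stieltjes representing measure of the component functional $T_{k,l}$ in the variable $|x|^{2}$, since by Proposition~\ref{pseudopos} one has $\int_{0}^{\infty}s^{N}\,d\mu_{k,l}^{\psi}=\int_{\mathbb{R}^{d}}|x|^{2N}Y_{k,l}(x)\,d\mu=T_{k,l}(t^{N})$ for all $N$. Hence the hypothesis ``each $\mu_{k,l}^{\psi}$ is determined in the sense of Stieltjes'' is precisely the hypothesis of Theorem~\ref{ThmUniq}. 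For the implication that Stieltjes determinacy forces $|V_{\mu}|=1$, I would simply invoke that theorem: it yields at most one pseudo-positive signed moment measure representing the common functional, and since $\mu\in V_{\mu}$ this gives $V_{\mu}=\{\mu\}$. (Note that this direction does not even use the summability assumption.)

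For the converse I would argue by contraposition: assume that some $\mu_{k_{0},l_{0}}^{\psi}$ is \emph{not} determined in the sense of Stieltjes, and construct a second element of $V_{\mu}$. Applying Proposition~\ref{PropSD} to $\nu:=\mu_{k_{0},l_{0}}^{\psi}$ produces uncountably many $\rho\in W_{\mu_{k_{0},l_{0}}^{\psi}}^{Sti}$ satisfying $\int_{0}^{\infty}u^{-k}\,d\rho<\infty$ for every $k\in\mathbb{N}_{0}$; fix one such $\rho\neq\mu_{k_{0},l_{0}}^{\psi}$. I then perturb a single component of $\mu$: define component measures on $[0,\infty)$ by $\sigma_{k,l}:=\mu_{k,l}$ for $(k,l)\neq(k_{0},l_{0})$ and $\sigma_{k_{0},l_{0}}:=\rho^{\psi^{-1}}$, so that $\sigma_{k_{0},l_{0}}^{\psi}=\rho$ while the remaining components are unchanged.

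The crux is to verify that this perturbed family still satisfies the summability hypothesis~(\ref{neu21}) of Theorem~\ref{ThmRepG}. The untouched terms are dominated by the assumed condition~(\ref{maincond2}) for $\mu$, so only the $(k_{0},l_{0})$ term needs attention; after the substitution $u=r^{2}$ it becomes $\int_{0}^{\infty}r^{N-k_{0}}\,d\sigma_{k_{0},l_{0}}=\int_{0}^{\infty}u^{(N-k_{0})/2}\,d\rho$, which is finite for every $N$ — for $N\ge k_{0}$ because $\rho$ is a moment measure, and for $N<k_{0}$ precisely because of the negative-moment bounds furnished by Proposition~\ref{PropSD}. This is the step I expect to be the main obstacle, and it is exactly why the extra conclusion of Proposition~\ref{PropSD} is indispensable. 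With summability established, Theorem~\ref{ThmRepG} produces a pseudo-positive signed moment measure $\sigma$ whose induced component measures are the $\sigma_{k,l}$ (Remark~\ref{Rrep}.3). It then remains to check two things. Since $\rho\in W_{\mu_{k_{0},l_{0}}^{\psi}}^{Sti}$ and $\sigma_{k,l}^{\psi}=\mu_{k,l}^{\psi}$ for the unperturbed indices, the distributed moments $\int_{\mathbb{R}^{d}}|x|^{2N}Y_{k,l}\,d\sigma$ and $\int_{\mathbb{R}^{d}}|x|^{2N}Y_{k,l}\,d\mu$ coincide for all $N,k,l$; because these polynomials form a basis (the Gauss representation), this gives $\sigma\sim\mu$, i.e. $\sigma\in V_{\mu}$. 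On the other hand $\sigma_{k_{0},l_{0}}^{\psi}=\rho\neq\mu_{k_{0},l_{0}}^{\psi}$ forces $\sigma_{k_{0},l_{0}}\neq\mu_{k_{0},l_{0}}$, whence $\sigma\neq\mu$ by the uniqueness of the component measures in Proposition~\ref{pseudopos} (equivalently, the contrapositive of Proposition~\ref{soso}). Thus $V_{\mu}$ contains at least two elements, which is the desired contrapositive and completes the argument.
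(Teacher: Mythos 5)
Your proposal is correct and follows essentially the same route as the paper's proof: Theorem \ref{ThmUniq} for the ``if'' direction, and for the ``only if'' direction a perturbation of the single component $\mu_{k_{0},l_{0}}$ using the measure supplied by Proposition \ref{PropSD}, followed by an application of Theorem \ref{ThmRepG} to the modified family of component measures. You are merely more explicit than the paper about verifying the summability hypothesis (\ref{neu21}) for the perturbed family and about why the resulting measure lies in $V_{\mu}$ yet differs from $\mu$ (via Propositions \ref{pseudopos} and \ref{soso}); these details are asserted rather than spelled out in the original.
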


\begin{proof}
Let $\mu_{k,l}$ be the component measures as defined in Proposition
\ref{pseudopos}. Assume that $V_{\mu}=\left\{  \mu\right\}  $ but that some
$\tau:=\mu_{k_{0},l_{0}}^{\psi}$ is not determined in the sense of Stieltjes
where $\psi\left(  t\right)  =t^{2}$ for $t\in\left[  0,\infty\right)  .$ By
Proposition \ref{PropSD} there exists a measure $\sigma\in W_{\tau}^{Sti}$
such $\sigma\neq\tau$ and $\int_{0}^{\infty}r^{-k}d\sigma<\infty.$ By Theorem
\ref{ThmRepG} there exists a pseudo-positive moment measure $\widetilde{\mu}$
representing the functional
\[
\widetilde{T}\left(  f\right)  :=\sum_{k=0,k\neq k_{0}}^{\infty}%
\sum_{l=1,l\neq l_{0}}^{a_{k}}\int_{0}^{\infty}f_{k,l}\left(  r\right)
r^{-k}d\mu_{k,l}+\int_{0}^{\infty}f_{k_{0},l_{0}}\left(  r\right)
r^{-k}d\sigma^{\psi^{-1}}.
\]
Then $\widetilde{\mu}$ is different from $\mu$ since $\sigma^{\psi^{-1}}%
\neq\mu_{k_{0},l_{0}}$ and $\widetilde{\mu}\in V_{\mu}$ since $\sigma\in
W_{\tau}^{Sti}.$ This contradiction shows that $\mu_{k_{0},l_{0}}^{\psi}$ is
determined in the sense of Stieltjes. The sufficiency follows from Theorem
\ref{ThmUniq}. The proof is complete.
\end{proof}

\section{Miscellaneous results\label{Sexamples}}

In this section we provide some examples and results on pseudo--positive
measures which throw more light on these new notions.

\subsection{The univariate case\label{Sunivariate}}

As we mentioned in the Introduction the non-negative spherically symmetric
measures are pseudopositive and as it is easy to see from (\ref{cjkl}) our
theory reduces to the classical Stieltjes moment problem. Other pseudopositive
measures $\mu$ for which our theory reduces essentially to the Stieltjes
one-dimensional moment problem  are those having only one component measure
$\mu_{k,l}$ non-zero; this is the problem $\int_{0}^{\infty}r^{k+2j}d\mu
_{k,l}\left(  r\right)  =c_{j,k,l}$ for  $j=0,1,2,...$,  (cf. (by
(\ref{eqlim}) and  (\ref{cjkl})). 

On the other hand it is instructive to consider the univariate case of our
theory: then $d=1,$ $\mathbb{S}^{0}=\left\{  -1,1\right\}  ,$ and the
normalized measure is $\omega_{0}\left(  \theta\right)  =\frac{1}{2}$ for all
$\theta\in\mathbb{S}^{0}.$ The harmonic polynomials are the linear functions,
their basis are the two functions defined by $Y_{0}\left(  x\right)  =1$ and
$Y_{1}\left(  x\right)  =x$ for all $x\in\mathbb{R}.$ The following is now
immediate from the definitions:

\begin{proposition}
Let $d=1.$ A functional $T:\mathbb{C}\left[  x\right]  \rightarrow\mathbb{C\ }
$ is pseudo-positive definite if and only if $T\left(  p^{\ast}\left(
x^{2}\right)  p\left(  x^{2}\right)  \right)  \geq0$ and $T\left(  xp^{\ast
}\left(  x^{2}\right)  p\left(  x^{2}\right)  \right)  \geq0$ for all
$p\in\mathbb{C}\left[  x\right]  .$
\end{proposition}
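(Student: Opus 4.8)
The plan is to prove the equivalence by simply specializing the general definition of pseudo-positive definiteness to the case $d=1$; the only genuine work lies in the one-dimensional spherical-harmonic bookkeeping, after which the statement is a direct transcription. First I would pin down the spaces $\mathcal{H}_{k}(\mathbb{R})$. In one variable the Laplacian is $d^{2}/dx^{2}$, so a homogeneous polynomial $c\,x^{k}$ is harmonic precisely when $k(k-1)c=0$; hence $\mathcal{H}_{0}(\mathbb{R})=\mathbb{C}$ and $\mathcal{H}_{1}(\mathbb{R})=\mathbb{C}\cdot x$, while $\mathcal{H}_{k}(\mathbb{R})=\{0\}$ for every $k\geq 2$. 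Thus $a_{0}=a_{1}=1$ and $a_{k}=0$ for $k\geq 2$. I would then check that, with the normalized measure $\omega_{0}\equiv\tfrac12$ on $\mathbb{S}^{0}=\{-1,1\}$, the functions $Y_{0}\equiv 1$ and $Y_{1}(x)=x$ are orthonormal: indeed $\langle Y_{i},Y_{j}\rangle_{\mathbb{S}^{0}}=\tfrac12\bigl(Y_{i}(1)\overline{Y_{j}(1)}+Y_{i}(-1)\overline{Y_{j}(-1)}\bigr)=\delta_{ij}$, so $\{1,x\}$ is the orthonormal basis featuring in the statement.

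Next I would write out the surviving component functionals. Since $a_{k}=0$ for $k\geq2$, the only ones are $T_{0,1}$ and $T_{1,1}$. Using $|x|^{2}=x^{2}$ together with $Y_{0}\equiv 1$ and $Y_{1}(x)=x$, the defining formula $T_{k,l}(p)=T\bigl(p(|x|^{2})Y_{k,l}(x)\bigr)$ gives
\[
T_{0,1}(p)=T\bigl(p(x^{2})\bigr),\qquad T_{1,1}(p)=T\bigl(x\,p(x^{2})\bigr)
\]
for every $p\in\mathbb{C}[x]$. Hence ``$T$ is pseudo-positive definite'' unpacks to the statement that $T_{0,1}$ and $T_{1,1}$ are each positive in the Stieltjes sense, i.e. $T_{k,l}(p^{\ast}p)\geq0$ and $T_{k,l}(t\,p^{\ast}p)\geq0$ for all $p$. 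Transcribing the principal (unshifted) inequality for each of the two components through the displayed formulas yields exactly $T\bigl(p^{\ast}(x^{2})p(x^{2})\bigr)\geq0$ and $T\bigl(x\,p^{\ast}(x^{2})p(x^{2})\bigr)\geq0$, the two conditions in the statement, while the $t$-shifted halves of the definition read $T\bigl(x^{2}p^{\ast}(x^{2})p(x^{2})\bigr)\geq0$ and $T\bigl(x^{3}p^{\ast}(x^{2})p(x^{2})\bigr)\geq0$. Because $k\in\{0,1\}$ exhausts all components, these are all the conditions that occur, and the equivalence is the evident transcription.

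I do not expect a substantive obstacle: as the paper remarks, the result is immediate from the definitions and the proof is pure bookkeeping. The only place demanding care is the harmonic-analytic input in dimension one, namely verifying that the list of solid harmonics genuinely terminates at degree $1$ (so $T_{0,1}$ and $T_{1,1}$ really are the only component functionals) and that $\{1,x\}$ is orthonormal for the chosen normalization of the measure on $\mathbb{S}^{0}$. Once these two points are settled, the substitution $|x|^{2}=x^{2}$ converts the abstract positivity of the two components directly into the inequalities in the statement, and nothing further is required.
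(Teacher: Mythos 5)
Your bookkeeping is exactly the paper's: the paper offers no argument beyond setting up $\mathbb{S}^{0}=\{-1,1\}$, $Y_{0}\equiv 1$, $Y_{1}(x)=x$ and declaring the statement immediate from the definitions, and your identification of $\mathcal{H}_{k}(\mathbb{R})$, the orthonormality check for the normalized measure on $\mathbb{S}^{0}$, and the two surviving component functionals $T_{0,1}(p)=T(p(x^{2}))$ and $T_{1,1}(p)=T(xp(x^{2}))$ all agree with that setup. The ``only if'' direction of the proposition is indeed a pure transcription.

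However, your own enumeration exposes a gap in the ``if'' direction that the phrase ``the equivalence is the evident transcription'' does not close. Pseudo-positive definiteness of $T$ consists of \emph{four} families of inequalities: the two appearing in the statement together with the $t$-shifted ones $T\left(x^{2}p^{\ast}(x^{2})p(x^{2})\right)\geq 0$ and $T\left(x^{3}p^{\ast}(x^{2})p(x^{2})\right)\geq 0$, which you correctly list but never derive from the hypothesis. No substitution reduces them to the stated conditions: $x^{2}p^{\ast}(x^{2})p(x^{2})$ equals $q^{\ast}(x^{2})q(x^{2})$ only if $t\,p^{\ast}(t)p(t)$ were a hermitian square in $t$ (it is not, by degree parity), and it is never of the form $x\,q^{\ast}(x^{2})q(x^{2})$ since one side is even and the other odd. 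Worse, the implication you need is genuinely false: the functional with moments $T(x^{2n})=(-1)^{n}$ and $T(x^{2n+1})=1$ satisfies $T\left(p^{\ast}(x^{2})p(x^{2})\right)=\left|\sum_{j}(-1)^{j}a_{j}\right|^{2}\geq 0$ and $T\left(xp^{\ast}(x^{2})p(x^{2})\right)=\left|\sum_{j}a_{j}\right|^{2}\geq 0$ for every $p(t)=\sum_{j}a_{j}t^{j}$, yet $T_{0,1}(t\cdot 1)=T(x^{2})=-1<0$, so it is not pseudo-positive definite. As written, your argument therefore establishes only one implication; for the stated equivalence to be provable, the right-hand side must be read as including the two shifted inequalities as well (equivalently, that both component functionals are positive in the Stieltjes, not merely Hamburger, sense), and an honest proof has to either add those conditions or supply an argument that neither you nor the paper gives.
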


Recall that a functional $T:\mathbb{C}\left[  x\right]  \rightarrow
\mathbb{C\ }$defines a \emph{Stieltjes moment sequence} if $T\left(  q^{\ast
}\left(  x\right)  q\left(  x\right)  \right)  \geq0$ and $T\left(  xq^{\ast
}\left(  x\right)  q\left(  x\right)  \right)  \geq0$ for all $q\in
\mathbb{C}\left[  x\right]  ,$ so this property implies pseudo-positive
definiteness; the next example shows that the converse is not true:

\begin{example}
Let $\sigma$ be a non-negative finite measure on the interval $\left[
a,b\right]  $ with $a>0.$ Then the functional $T:\mathbb{C}\left[  x\right]
\rightarrow\mathbb{C\ }$ defined by
\[
T\left(  f\right)  =\int_{a}^{b}f\left(  x\right)  d\sigma-\int_{a}%
^{b}f\left(  -x\right)  d\sigma
\]
is pseudo-positive definite but not positive definite.
\end{example}

\begin{proof}
As pointed out in \cite[Chapter 4.1]{StWe71}, the Laplace--Fourier expansion
of $f$ is given by $f\left(  r\theta\right)  =f_{0}\left(  r\right)
Y_{0}\left(  \theta\right)  +f_{1}\left(  r\right)  Y_{1}\left(
\theta\right)  $ for $x=r\theta$ with $r=\left|  x\right|  $ and $\theta
\in\mathbb{S}^{0},$ where
\begin{align*}
f_{0}\left(  r\right)   &  =\int_{\mathbb{S}^{0}}f\left(  r\theta\right)
Y_{0}\left(  \theta\right)  d\omega_{0}\left(  \theta\right)  =\frac{f\left(
r\right)  +f\left(  -r\right)  }{2},\\
f_{1}\left(  r\right)   &  =\int_{\mathbb{S}^{0}}f\left(  r\theta\right)
Y_{1}\left(  \theta\right)  d\omega_{0}\left(  \theta\right)  =\frac{f\left(
r\right)  -f\left(  -r\right)  }{2}.
\end{align*}
Since $f_{0}$ is even and $f_{1}$ is odd and $f=f_{0}+f_{1}$ we infer that
$T\left(  f\right)  =2\int_{a}^{b}f_{1}\left(  r\right)  d\sigma$. By
Proposition \ref{PropTTT} $T$ is pseudo-positive definite. Since $T\left(
1\right)  =0$ and $T\neq0$ it is clear that $T$ is not positive definite.
\end{proof}

\subsection{A criterion for pseudo-positivity}

The following is a simple criterion for pseudo-positivity:

\begin{proposition}
\label{ThmLaplace}Let $\mu$ be a signed moment measure on $\mathbb{R}^{d}.$
Assume that $\mu$ has a density $w\left(  x\right)  $ with respect to the
Lebesgue measure $dx$ such that $\theta\longmapsto w\left(  r\theta\right)  $
is in $L^{2}\left(  \mathbb{S}^{d-1}\right)  $ for each $r>0.$ If the
Laplace-Fourier coefficients of $w,$
\[
w_{k,l}\left(  r\right)  :=\int_{\mathbb{S}^{d-1}}w\left(  r\theta\right)
Y_{k,l}\left(  \theta\right)  d\theta
\]
are non-negative then $\mu$ is pseudo-positive and
\begin{align}
d\mu_{k,l}\left(  r\right)   &  =r^{k+d-1}w_{k,l}\left(  r\right)
,\label{powermmuk}\\
\int_{0}^{\infty}r^{-k}d\mu_{k,l}\left(  r\right)   &  =\int_{0}^{\infty
}w_{k,l}\left(  r\right)  \cdot r^{d-1}dr \label{powermmuk2}%
\end{align}
if the last integral exists. The measures $\mu_{k,l}$ are defined by means of
equality (\ref{eqlim}).
\end{proposition}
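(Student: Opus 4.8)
The plan is to verify the defining inequality \eqref{defpspos} for pseudo-positivity directly, using the hypothesis on the sign of the Laplace--Fourier coefficients $w_{k,l}$, and then to identify the component measures $\mu_{k,l}$ by computing the integral in \eqref{eqlim}. First I would fix a non-negative continuous function $h:[0,\infty)\to[0,\infty)$ with compact support and consider the integral $\int_{\mathbb{R}^d} h(|x|)\,Y_{k,l}(x)\,d\mu$. Writing $d\mu = w(x)\,dx$ and passing to spherical coordinates $x=r\theta$ (so that $dx = r^{d-1}\,dr\,d\theta$), and recalling the homogeneity identity $Y_{k,l}(x)=r^k Y_{k,l}(\theta)$ from the Introduction, the integral factors as
\[
\int_{\mathbb{R}^d} h(|x|)\,Y_{k,l}(x)\,d\mu = \int_0^\infty h(r)\,r^k\Bigl(\int_{\mathbb{S}^{d-1}} w(r\theta)\,Y_{k,l}(\theta)\,d\theta\Bigr)r^{d-1}\,dr = \int_0^\infty h(r)\,r^{k+d-1}w_{k,l}(r)\,dr.
\]
Since $h\ge 0$, $r^{k+d-1}\ge 0$, and $w_{k,l}(r)\ge 0$ by hypothesis, the right-hand side is non-negative, which is exactly \eqref{defpspos}; hence $\mu$ is pseudo-positive.

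Next I would read off the component measures. The computation above shows that the positive functional $h\mapsto \int_{\mathbb{R}^d} h(|x|)\,Y_{k,l}(x)\,d\mu$ is represented by the measure whose Radon--Nikodym density against $dr$ is $r^{k+d-1}w_{k,l}(r)$. Comparing with the defining equation \eqref{eqlim} for $\mu_{k,l}$ (applied with the variable $t=r$, for $h\in C_c[0,\infty)$, and invoking the uniqueness of the representing measure from Proposition \ref{pseudopos}), I obtain $d\mu_{k,l}(r)=r^{k+d-1}w_{k,l}(r)\,dr$, which is \eqref{powermmuk}. Formula \eqref{powermmuk2} then follows by substituting this density into $\int_0^\infty r^{-k}\,d\mu_{k,l}(r)$, the factor $r^{-k}$ cancelling against $r^k$ to leave $\int_0^\infty w_{k,l}(r)\,r^{d-1}\,dr$, provided this last integral converges.

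The main technical point to handle carefully is integrability and the legitimacy of swapping the order of integration when separating the radial and angular parts; this is what justifies passing from the $\mathbb{R}^d$-integral to the iterated integral and what underlies extending \eqref{eqlim} from compactly supported $h$ to the statement in \eqref{powermmuk2}. Because $\mu$ is assumed to be a signed \emph{moment} measure, all polynomials are integrable against $|\mu|$, and the hypothesis $\theta\mapsto w(r\theta)\in L^2(\mathbb{S}^{d-1})$ together with the Cauchy--Schwarz inequality (exactly as used in \eqref{eqffpol}) guarantees that $w_{k,l}(r)$ is well-defined and that the radial integrals make sense. I expect the sign argument and the change of variables to be routine; the only genuine care is needed in invoking Fubini/Tonelli and the monotone or dominated convergence theorem to pass from $C_c$ test functions to the polynomially bounded setting, and in noting the conditional nature of \eqref{powermmuk2}, which is why the statement explicitly assumes the last integral exists.
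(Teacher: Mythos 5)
Your proposal is correct and follows essentially the same route as the paper: pass to polar coordinates, integrate out the angular variable to identify the density $r^{k+d-1}w_{k,l}(r)$ of $\mu_{k,l}$, read off pseudo-positivity from $w_{k,l}\geq 0$, and obtain (\ref{powermmuk2}) by a monotone-convergence/cut-off argument (the paper uses truncations $h_m(t)=t^{-k}$ for $t\geq 1/m$, which is exactly the care you flag as needed since $r^{-k}$ is not polynomially bounded near $0$).
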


\begin{proof}
Since $\mu$ has a density $w\left(  x\right)  $ we can use polar coordinates
to obtain for $f\in C_{pol}\left(  \mathbb{R}^{d}\right)  $
\begin{equation}
\int_{\mathbb{R}^{d}}fd\mu=\int_{\mathbb{R}^{d}}f\left(  x\right)  w\left(
x\right)  dx=\int_{0}^{\infty}\int_{\mathbb{S}^{d-1}}f\left(  r\theta\right)
w\left(  r\theta\right)  r^{d-1}d\theta dr. \label{eqPolar}%
\end{equation}
For any $h\in C_{pol}\left[  0,\infty\right)  $ we put $f\left(  x\right)
=h\left(  \left|  x\right|  \right)  Y_{k,l}\left(  x\right)  ,$ then we
obtain
\begin{equation}
\int_{\mathbb{R}^{d}}h\left(  \left|  x\right|  \right)  Y_{k,l}\left(
x\right)  d\mu=\int_{0}^{\infty}\int_{\mathbb{S}^{d-1}}h\left(  r\right)
r^{k+d-1}Y_{k,l}\left(  \theta\right)  w\left(  r\theta\right)  d\theta dr.
\label{eqPolar2}%
\end{equation}
Since $\theta\longmapsto w\left(  r\theta\right)  $ is in $L^{2}\left(
\mathbb{S}^{d-1}\right)  $, we know that $w_{k,l}\left(  r\right)
=\int_{\mathbb{S}^{d-1}}w\left(  r\theta\right)  Y_{k,l}\left(  \theta\right)
d\theta.$ Hence, by the definition of $\mu_{k,l},$ we obtain
\begin{equation}
\int_{0}^{\infty}h\left(  r\right)  d\mu_{k,l}:=\int_{\mathbb{R}^{d}}h\left(
\left|  x\right|  \right)  Y_{k,l}\left(  x\right)  d\mu=\int_{0}^{\infty
}h\left(  r\right)  w_{k,l}\left(  r\right)  r^{k+d-1}dr. \label{eqLaplacemu}%
\end{equation}
Thus the measure $\mu$ is pseudo-positive, and (\ref{powermmuk}) follows. Let
us prove (\ref{powermmuk2}): we define the cut--off functions $h_{m}\in
C_{pol}\left[  0,\infty\right)  $ such that $h_{m}\left(  t\right)  =t^{-k}$
for $t\geq1/m$ and such that $h_{m}\leq h_{m+1}.$ Now use (\ref{eqLaplacemu})
and the monotone convergence theorem to obtain (\ref{powermmuk2}).
\end{proof}

\subsection{Examples in the two--dimensional case\label{S2d}}

Let us consider the case $d=2,$ and take the usual orthonormal basis of solid
harmonics, defined by $Y_{0}\left(  e^{it}\right)  =\frac{1}{2\pi}$ and
\begin{equation}
Y_{k,1}\left(  re^{it}\right)  =\frac{1}{\sqrt{\pi}}r^{k}\cos kt\text{ and
}Y_{k,2}\left(  re^{it}\right)  =\frac{1}{\sqrt{\pi}}r^{k}\sin kt\text{ for
}k\in\mathbb{N}. \label{eqY}%
\end{equation}
We define a density $w^{\left(  \alpha\right)  }:\mathbb{R}^{n}\rightarrow
\left[  0,\infty\right)  $, depending on parameter $\alpha>0$, by
\begin{align*}
w^{\left(  \alpha\right)  }\left(  re^{it}\right)   &  :=\left(  1-r^{\alpha
}\right)  P\left(  re^{it}\right)  \qquad\text{ for }0\leq r<1\\
w^{\left(  \alpha\right)  }\left(  re^{it}\right)   &  =0\qquad\text{for
}r\geq1;
\end{align*}
here the function $P\left(  re^{it}\right)  $ is the Poisson kernel for $0\leq
r<1$ given by (see e.g. 5.1.16 in \cite[p. 243]{AAR99})
\begin{equation}
P\left(  re^{it}\right)  :=\frac{1-r^{2}}{1-2r\cos t+r^{2}}=1+\sum
_{k=1}^{\infty}2r^{k}\cos kt. \label{eqPoisson}%
\end{equation}
By Proposition \ref{ThmLaplace}, the measure $d\mu^{\alpha}:=w^{\left(
\alpha\right)  }\left(  x\right)  dx$ is pseudo-positive. For $k>0,$ by
(\ref{powermmuk2}) and (\ref{eqY}) we obtain
\[
\int r^{-k}d\mu_{k,1}^{\alpha}=2\sqrt{\pi}\int_{0}^{1}r^{k+1}\left(
1-r^{\alpha}\right)  dr=\frac{2\sqrt{\pi}\alpha}{\left(  k+2\right)  \left(
\alpha+k+2\right)  }.
\]
It follows that $w^{\left(  \alpha\right)  }\left(  x\right)  dx$ satisfies
the summability condition (\ref{maincond2}).

On the other hand, there exist pseudo-positive measures which do not satisfy
the summability condition (\ref{maincond2}):

\begin{proposition}
Let $w\left(  re^{it}\right)  :=P\left(  re^{it}\right)  $ for $0\leq r<1$ and
$w\left(  re^{it}\right)  :=0$ for $r\geq1$ where $P\left(  x\right)  $ is
given by (\ref{eqPoisson}). Then $d\mu:=w\left(  x\right)  dx$ is a
pseudo-positive, non-negative moment measure which does not satisfy the
summability condition (\ref{maincond2}).
\end{proposition}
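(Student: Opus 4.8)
The plan is to read off the Laplace--Fourier coefficients of the density $w$ from the cosine expansion (\ref{eqPoisson}) of the Poisson kernel, invoke the criterion of Proposition \ref{ThmLaplace} to obtain pseudo-positivity together with explicit formulas for the component measures $\mu_{k,l}$, and then to show that the masses $\int_0^\infty r^{-k}\,d\mu_{k,l}$ decay only like $1/k$, so that the series in (\ref{maincond2}) diverges already for $N=0$.

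First I would fix $0\le r<1$ and, using the cosine series (\ref{eqPoisson}) together with the orthogonality relations $\int_0^{2\pi}\cos mt\cos kt\,dt=\pi\delta_{mk}$ and $\int_0^{2\pi}\cos mt\sin kt\,dt=0$, compute the Laplace--Fourier coefficients of $w$ against the orthonormal basis (\ref{eqY}). Because the Poisson kernel is even in $t$ and carries only cosine terms, this yields
\[
w_{k,1}(r)=2\sqrt{\pi}\,r^{k},\qquad w_{k,2}(r)=0\quad(k\ge 1),
\]
while $w_0(r)$ is a positive constant; all of these vanish for $r\ge 1$. For each fixed $r<1$ the map $\theta\mapsto w(r\theta)$ is a bounded continuous function on $\mathbb{S}^1$, hence lies in $L^2(\mathbb{S}^1)$, so Proposition \ref{ThmLaplace} applies. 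Since every coefficient $w_{k,l}$ is non-negative, that proposition gives that $\mu$ is pseudo-positive and supplies the formulas (\ref{powermmuk})--(\ref{powermmuk2}) for the $\mu_{k,l}$.

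To see that $\mu$ is a non-negative moment measure I note that $P\ge 0$, so $d\mu=w\,dx\ge 0$, and that $w$ is supported in the closed unit disk; hence $|x|\le 1$ on $\mathrm{supp}\,\mu$ and $\int|x|^{N}\,d\mu\le\int_{|x|\le 1}P(x)\,dx=\pi<\infty$ for every $N$, so all polynomials are $\mu$-integrable. Finally, to refute (\ref{maincond2}) it suffices to take $N=0$: by (\ref{powermmuk2}) with $d=2$,
\[
\int_0^\infty r^{-k}\,d\mu_{k,1}=\int_0^1 w_{k,1}(r)\,r\,dr=2\sqrt{\pi}\int_0^1 r^{k+1}\,dr=\frac{2\sqrt{\pi}}{k+2},
\]
and since $\mu_{k,2}=0$ the sum over $l$ contributes exactly this, so the $N=0$ series in (\ref{maincond2}) contains $\sum_{k\ge 1}\frac{2\sqrt{\pi}}{k+2}$, a divergent harmonic series (the same computation with an extra factor $r^{N}$ gives $\frac{2\sqrt{\pi}}{N+k+2}$, divergent for every $N$). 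There is no serious obstacle here; the only point deserving attention is the unboundedness of $P$ near the boundary, which is harmless because only the finite total mass $\int P\,dx=\pi$ and the explicit $1/(k+2)$ decay enter the estimates---and it is precisely this slow $1/k$ decay of the component masses that forces the failure of summability.
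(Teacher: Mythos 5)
Your proof is correct and follows essentially the same route as the paper: compute $w_{k,1}(r)=2\sqrt{\pi}\,r^{k}$ from the cosine expansion of the Poisson kernel, apply Proposition \ref{ThmLaplace} to get pseudo-positivity and the formula $\int r^{-k}d\mu_{k,1}=2\sqrt{\pi}/(k+2)$, and conclude that the $N=0$ series in (\ref{maincond2}) is a divergent harmonic series. The only difference is that you spell out the (routine) verifications of pseudo-positivity and of the moment-measure property, which the paper leaves implicit.
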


\begin{proof}
It follows from (\ref{powermmuk2}) for $k\geq1$
\[
\int r^{-k}d\mu_{k,1}=\int_{0}^{\infty}w_{k,1}\left(  r\right)  \cdot
r^{d-1}dr=2\sqrt{\pi}\int_{0}^{1}r^{k+1}dr=\frac{2\sqrt{\pi}}{\left(
k+2\right)  },
\]
so we see that the summability condition (\ref{maincond2}) is not fulfilled.
\end{proof}

\subsection{The summability condition}

The next result shows that the spectrum of the measures $\sigma_{k,l}$ is
contained in the spectrum of the representation measure $\mu$.

\begin{theorem}
Let $\sigma_{k,l}$ be non-negative measures on $\left[  0,\infty\right)  $. If
the functional $T:\mathbb{C}\left[  x_{1},...,x_{d}\right]  \rightarrow
\mathbb{C}$ defined by (\ref{defTTT}) possesses a representing moment measure
$\mu$ with compact support then
\[
\sigma_{k,l}(\left\{  \left|  x\right|  ^{2}\right\}  )\leq\max_{\theta
\in\mathbb{S}^{d-1}}\left|  Y_{k,l}\left(  \theta\right)  \right|
\cdot\left|  x\right|  ^{k}\cdot\left|  \mu\right|  \left(  \left|  x\right|
^{2}\mathbb{S}^{d-1}\right)
\]
for any $x\in\mathbb{R}^{d}$ where $\left|  \mu\right|  $ is the total
variation and $\left|  x\right|  ^{2}\mathbb{S}^{d-1}=\{\left|  x\right|
^{2}\theta:\theta\in\mathbb{S}^{d-1}\}.$
\end{theorem}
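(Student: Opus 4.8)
The plan is to recognize the prescribed measures $\sigma_{k,l}$ as the component measures of the representing measure $\mu$ in the sense of Proposition \ref{pseudopos}, and then to read off the atom $\sigma_{k,l}(\{|x|^{2}\})$ as an integral of the solid harmonic $Y_{k,l}$ over a single sphere. First I would note that a compactly supported representing measure $\mu$ is automatically pseudo-positive: it agrees with $T$ on every polynomial, and $T$, being of the form (\ref{defTTT}) with non-negative $\sigma_{k,l}$, is pseudo-positive definite, so by Corollary \ref{ppp} the compact support turns this into genuine pseudo-positivity of $\mu$. Proposition \ref{pseudopos} then supplies non-negative component measures $\mu_{k,l}$ on $[0,\infty)$, necessarily with compact support, characterized by $\int_{0}^{\infty}h\,d\mu_{k,l}=\int_{\mathbb{R}^{d}}h(|y|)Y_{k,l}(y)\,d\mu$ for all $h\in C_{pol}[0,\infty)$, cf. (\ref{eqlim}).

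The heart of the argument, and the step I expect to be the main obstacle, is the identification $\sigma_{k,l}=\mu_{k,l}$. Evaluating $T$ on the polynomial $|y|^{2j}Y_{k,l}(y)$ in two ways gives it at once: through (\ref{defTTT}) the Laplace--Fourier coefficient of $|y|^{2j}Y_{k,l}(y)$ is $r^{2j+k}$, so the factor $r^{-k}$ cancels and $T(|y|^{2j}Y_{k,l})=\int_{0}^{\infty}r^{2j}\,d\sigma_{k,l}$, while through $\mu$ the displayed relation for $\mu_{k,l}$ gives $T(|y|^{2j}Y_{k,l})=\int_{0}^{\infty}r^{2j}\,d\mu_{k,l}$. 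Thus $\sigma_{k,l}$ and $\mu_{k,l}$ share all even moments. Pushing both forward under $\psi(r)=r^{2}$ converts these into the full moment sequences of $\sigma_{k,l}^{\psi}$ and $\mu_{k,l}^{\psi}$; since $\mu_{k,l}^{\psi}$ has compact support it is determinate, whence $\sigma_{k,l}^{\psi}=\mu_{k,l}^{\psi}$, and as $\psi$ is a bijection of $[0,\infty)$ we conclude $\sigma_{k,l}=\mu_{k,l}$. The delicate point is precisely that matching only the even moments suffices, which the substitution $t=r^{2}$ together with compact-support determinacy secures.

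Next I would extract the atom at $s:=|x|^{2}$. Choosing continuous functions $h_{n}$ with $0\le h_{n}\le 1$ decreasing pointwise to the indicator of $\{s\}$ and supported in shrinking neighbourhoods of $s$, I pass to the limit in $\int_{0}^{\infty}h_{n}\,d\mu_{k,l}=\int_{\mathbb{R}^{d}}h_{n}(|y|)Y_{k,l}(y)\,d\mu$. Dominated convergence applies on the left against $\mu_{k,l}$ and on the right against the finite total variation $|\mu|$, the dominating function $|Y_{k,l}(y)|$ times a fixed cut-off being $|\mu|$-integrable because $\mu$ has compact support; passing through $|\mu|$ is the only bookkeeping the signed measure requires. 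This yields the point-mass formula $\sigma_{k,l}(\{s\})=\int_{\{|y|=s\}}Y_{k,l}(y)\,d\mu(y)$, the integration running over the sphere $s\,\mathbb{S}^{d-1}=|x|^{2}\mathbb{S}^{d-1}$.

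Finally homogeneity finishes the estimate. On $\{|y|=s\}$ the solid harmonic factors as $Y_{k,l}(y)=s^{k}Y_{k,l}(y/|y|)$, hence $|Y_{k,l}(y)|\le s^{k}\max_{\theta\in\mathbb{S}^{d-1}}|Y_{k,l}(\theta)|$; integrating against $|\mu|$ over the sphere gives $\sigma_{k,l}(\{|x|^{2}\})\le \max_{\theta\in\mathbb{S}^{d-1}}|Y_{k,l}(\theta)|\cdot(|x|^{2})^{k}\cdot|\mu|(|x|^{2}\mathbb{S}^{d-1})$, which is the asserted bound, with the radial factor $(|x|^{2})^{k}$ produced by the degree-$k$ homogeneity of $Y_{k,l}$ evaluated on the sphere of radius $|x|^{2}$. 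Apart from the determinacy identification, the only things to watch are this interchange of limits for the signed $\mu$ and the measurability of the sphere, both routine.
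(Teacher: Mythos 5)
Your argument is correct in substance but follows a genuinely different, and heavier, route than the paper's. The paper never identifies $\sigma_{k,l}$ with the component measures of $\mu$: it takes a non-negative polynomial $p$ equal to $1$ at the relevant point, observes that the atom is at most $\int_{0}^{\infty}p(r^{2})\,d\sigma_{k,l}=T\bigl(p(|y|^{2})Y_{k,l}(y)\bigr)=\int_{\mathbb{R}^{d}}p(|y|^{2})Y_{k,l}(y)\,d\mu\le\max_{\theta}|Y_{k,l}(\theta)|\int|p(|y|^{2})|\,|y|^{k}\,d|\mu|$, and then lets $p$ run through a sequence converging boundedly on the compact support of $|\mu|$ to the indicator of the point; dominated convergence finishes it. This uses only that $\mu$ represents $T$ on polynomials and is compactly supported. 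Your route --- pseudo-positivity of $\mu$ via Corollary \ref{ppp}, the component measures of Proposition \ref{pseudopos}, and the identification $\sigma_{k,l}=\mu_{k,l}$ by matching even moments and invoking determinacy of compactly supported measures after the substitution $t=r^{2}$ --- is valid (the determinacy step is exactly where compactness enters, and it is needed: without it Corollary \ref{ppp} and the determinacy both fail), and it buys more: you obtain the exact identity $\sigma_{k,l}(\{\rho\})=\int_{\rho\mathbb{S}^{d-1}}Y_{k,l}\,d\mu$, i.e.\ the content of Remark \ref{Rrep}.3, not merely the upper bound. The cost is reliance on three earlier results where the paper's argument is essentially self-contained.

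One point you should not gloss over: your final bound carries the factor $(|x|^{2})^{k}=|x|^{2k}$, which you assert ``is the asserted bound''; it is not, since the theorem states $|x|^{k}$. Your exponent is in fact the correct one for an atom located at radius $\rho=|x|^{2}$: the clean statement is $\sigma_{k,l}(\{\rho\})\le\max_{\theta}|Y_{k,l}(\theta)|\cdot\rho^{k}\cdot|\mu|(\rho\mathbb{S}^{d-1})$ for every $\rho\ge 0$, and this is the form the paper's subsequent corollary actually uses. The mismatch is an inconsistency in the theorem's own statement (it mixes the radial variable $r$ with $t=r^{2}$), not an error in your derivation, but a correct write-up should state explicitly which inequality is being proved rather than claim the two factors coincide.
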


\begin{proof}
Let the support of $\mu$ be contained in $B_{R}.$ Let $x_{0}\in\mathbb{R}^{d}
$ be given. For every univariate polynomial $p\left(  t\right)  $ with
$p\left(  \left|  x_{0}\right|  ^{2}\right)  =1$ we have
\begin{align*}
\sigma_{k,l}\left(  \{\left|  x_{0}\right|  ^{2}\}\right)   &  \leq\int
_{0}^{\infty}p\left(  r^{2}\right)  d\sigma_{k,l}\leq\int_{\mathbb{R}^{d}%
}\left|  p(\left|  x\right|  ^{2})Y_{k,l}\left(  x\right)  \right|  d\left|
\mu\right| \\
&  \leq\max_{\theta\in\mathbb{S}^{d-1}}\left|  Y_{k,l}\left(  \theta\right)
\right|  \int_{\mathbb{R}^{d}}\left|  p(\left|  x\right|  ^{2})\right|
\left|  x\right|  ^{k}d\left|  \mu\right|  .
\end{align*}
Now choose a sequence of polynomials $p_{m}$ with $p_{m}\left(  \left|
x_{0}\right|  ^{2}\right)  =1$ which converges on $\left[  0,R\right]  $ to
the function $f$ defined by $f\left(  \left|  x_{0}\right|  ^{2}\right)  =1$
and $f\left(  t\right)  =0$ for $t\neq\left|  x_{0}\right|  ^{2}.$ Since
$\left|  \mu\right|  $ has support in $B_{R}$ Lebesgue's convergence theorem
shows that
\[
\sigma_{k,l}\left(  \{\left|  x_{0}\right|  ^{2}\}\right)  \leq\max_{\theta
\in\mathbb{S}^{d-1}}\left|  Y_{k,l}\left(  \theta\right)  \right|
\int_{\mathbb{R}^{d}}\left|  f\left(  x\right)  \right|  \left|  x\right|
^{k}d\left|  \mu\right|  .
\]
The last implies our statement.
\end{proof}

The following result shows that the summability condition is sometimes
equivalent to the existence of a pseudo-positive representing measure:

\begin{corollary}
Let $d=2.$ Let $\sigma_{k,l}$ be non-negative measures on $\left[
0,\infty\right)  $ and assume that they have disjoint and at most countable
supports. If the functional $T:\mathbb{C}\left[  x_{1},x_{2}\right]
\rightarrow\mathbb{C}$ defined by (\ref{defTTT}) possesses a representing
moment measure with compact support then
\[
\sum_{k=0}^{\infty}\sum_{l=1}^{a_{k}}\int_{0}^{\infty}r^{-k}d\sigma
_{k,l}\left(  r\right)  <\infty\text{.}%
\]
\end{corollary}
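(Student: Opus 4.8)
The plan is to turn the double series into a single sum over pairwise disjoint circles and then to invoke the finiteness of the total variation $\left| \mu\right| $. Write $S_{k,l}\subset\left[ 0,\infty\right) $ for the (at most countable) support of $\sigma_{k,l}$; since $\sigma_{k,l}$ is concentrated on the countable set $S_{k,l}$ it is purely atomic, so that
\[
\int_{0}^{\infty}r^{-k}d\sigma_{k,l}=\sum_{\rho\in S_{k,l}}\rho^{-k}\sigma_{k,l}\left( \left\{ \rho\right\} \right) .
\]
First I would bound each atom by means of the preceding Theorem, applied to the compactly supported representing measure $\mu$: for every radius $\rho$ it gives $\sigma_{k,l}\left( \left\{ \rho\right\} \right) \leq\max_{\theta\in\mathbb{S}^{1}}\left| Y_{k,l}\left( \theta\right) \right| \cdot\rho^{k}\cdot\left| \mu\right| \left( \rho\mathbb{S}^{1}\right) $, where $\rho\mathbb{S}^{1}$ is the circle of radius $\rho$. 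The decisive point is that the factor $\rho^{k}$ exactly cancels the weight $\rho^{-k}$, leaving
\[
\rho^{-k}\sigma_{k,l}\left( \left\{ \rho\right\} \right) \leq\max_{\theta\in\mathbb{S}^{1}}\left| Y_{k,l}\left( \theta\right) \right| \cdot\left| \mu\right| \left( \rho\mathbb{S}^{1}\right) .
\]
(For $\rho=0$ and $k\geq1$ the same estimate forces $\sigma_{k,l}\left( \left\{ 0\right\} \right) =0$, so the apparently singular term at the origin never occurs.)

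The next ingredient is the only genuinely two--dimensional input: in $d=2$ every unit--norm spherical harmonic of degree $k\geq1$ has the form $\theta\mapsto\frac{1}{\sqrt{\pi}}\cos\left( k\theta-\varphi\right) $, whence $\max_{\theta\in\mathbb{S}^{1}}\left| Y_{k,l}\left( \theta\right) \right| =\frac{1}{\sqrt{\pi}}$ independently of $k$ and $l$ (and the value for $k=0$ is smaller). Putting $C:=\frac{1}{\sqrt{\pi}}$ and summing the last display first over $\rho\in S_{k,l}$ and then over all admissible $k,l$, I obtain
\[
\sum_{k=0}^{\infty}\sum_{l=1}^{a_{k}}\int_{0}^{\infty}r^{-k}d\sigma_{k,l}\leq C\sum_{k=0}^{\infty}\sum_{l=1}^{a_{k}}\sum_{\rho\in S_{k,l}}\left| \mu\right| \left( \rho\mathbb{S}^{1}\right) .
\]

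Finally I would exploit the disjointness hypothesis. Because the supports $S_{k,l}$ are pairwise disjoint, each radius $\rho$ lies in at most one $S_{k,l}$, so the triple sum on the right collapses to the single sum $\sum_{\rho\in S}\left| \mu\right| \left( \rho\mathbb{S}^{1}\right) $ over the countable set $S:=\bigcup_{k,l}S_{k,l}$. Circles of distinct radii are disjoint, so $\left\{ \rho\mathbb{S}^{1}:\rho\in S\right\} $ is a countable family of pairwise disjoint Borel sets, and countable additivity of the finite measure $\left| \mu\right| $ yields
\[
\sum_{\rho\in S}\left| \mu\right| \left( \rho\mathbb{S}^{1}\right) =\left| \mu\right| \left( \bigcup_{\rho\in S}\rho\mathbb{S}^{1}\right) \leq\left| \mu\right| \left( \mathbb{R}^{2}\right) <\infty ,
\]
the right-hand side being finite since $\mu$ has compact support. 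Chaining the displays proves the assertion.

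I expect the difficulty to be organizational rather than deep. The two places that must be handled with care are the cancellation of $\rho^{k}$ against $\rho^{-k}$ (this is exactly the content the preceding Theorem supplies, and it is where the compact support of $\mu$ and the radial localization are used) and the collapse of the triple sum to a sum over disjoint circles, which relies squarely on the disjointness of the supports $S_{k,l}$. The uniform bound on $\left| Y_{k,l}\right| $ is what confines the argument to $d=2$: for $d\geq3$ the sup--norms of normalized spherical harmonics grow with $k$, and this step would fail.
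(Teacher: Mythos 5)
Your argument is correct and coincides with the paper's own proof: the paper likewise applies the preceding theorem to bound each atom $\sigma_{k,l}(\{\rho\})$ by $\max_{\theta}|Y_{k,l}(\theta)|\cdot\rho^{k}\cdot|\mu|(\rho\mathbb{S}^{1})$ (noting $\sigma_{k,l}(\{0\})=0$), cancels $\rho^{k}$ against $\rho^{-k}$, uses the uniform bound on $|Y_{k,l}|$ valid only for $d=2$, and collapses the resulting triple sum to $|\mu|(\mathbb{R}^{2})<\infty$ via the pairwise disjointness of the supports. Your write-up is, if anything, slightly more careful about the atomic decomposition and the behaviour at the origin.
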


\begin{proof}
Let $\Sigma_{k,l}$ be the support set of $\sigma_{k,l}.$ The last theorem
shows that $\sigma_{k,l}\left(  \left\{  0\right\}  \right)  =0,$ hence
$0\notin\Sigma_{k,l}.$ Moreover it tells us that
\[
\int_{0}^{\infty}r^{-k}d\sigma_{k,l}\left(  r\right)  \leq\max_{\theta
\in\mathbb{S}^{d-1}}\left|  Y_{k,l}\left(  \theta\right)  \right|  \cdot
\sum_{r\in\Sigma_{k,l}}\left|  \mu\right|  \left(  r\mathbb{S}^{d-1}\right)
.
\]
Since $d=2$ we know that $\max_{\theta\in\mathbb{S}^{d-1}}\left|
Y_{k,l}\left(  \theta\right)  \right|  \leq1.$ Hence
\[
\sum_{k=0}^{\infty}\sum_{l=1}^{a_{k}}\int_{0}^{\infty}r^{-k}d\sigma
_{k,l}\left(  r\right)  \leq\sum_{k=0}^{\infty}\sum_{l=1}^{a_{k}}\sum
_{r\in\Sigma_{k,l}}\left|  \mu\right|  \left(  r\mathbb{S}^{d-1}\right)
\leq\left|  \mu\right|  \left(  \mathbb{R}^{d}\right)
\]
where the last inequality follows from the fact that $\Sigma_{k,l}$ are
pairwise disjoint.
\end{proof}

Recall that the converse of the last theorem holds under the additional
assumption that the supports of all $\sigma_{k,l}$ are contained in some
interval $\left[  0,R\right]  .$

\begin{theorem}
\label{Tnonexistence}There exists a functional $\ T:\mathbb{C}\left[
x_{1},...,x_{d}\right]  \rightarrow\mathbb{C}$ which is pseudo-positive
definite but does not possess a pseudo-positive representing measure.
\end{theorem}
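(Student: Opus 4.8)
The plan is to exhibit an explicit pseudo-positive definite functional by prescribing its component data $\sigma_{k,l}$, chosen so that a single component already obstructs the existence of a representing measure. Concretely, I would work with the degree-one harmonic (which exists for every $d\geq 1$, with $Y_{1,1}(x)=x$ when $d=1$), set $\sigma_{k,l}=0$ for all $(k,l)\neq(1,1)$, and take
\[
\sigma_{1,1}:=\sum_{n=1}^{\infty}\frac{1}{n^{2}}\,\delta_{1/n},
\]
a finite non-negative measure with compact support $\{0\}\cup\{1/n:n\geq1\}\subseteq\left[0,1\right]$. Being compactly supported it is in particular a moment measure, so Proposition \ref{PropTTT} applies and the functional $T$ defined from these $\sigma_{k,l}$ by (\ref{defTTT}) is pseudo-positive definite; moreover $T$ is clearly well defined on polynomials since for each $f$ the relevant integrand $f_{1,1}(r)r^{-1}=p_{1,1}(r^{2})$ is a polynomial integrated against a compactly supported measure. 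The point of the weights is the divergence
\[
\int_{0}^{\infty}r^{-1}\,d\sigma_{1,1}=\sum_{n=1}^{\infty}\frac{1}{n^{2}}\cdot n=\sum_{n=1}^{\infty}\frac1n=\infty .
\]

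Next I would record the determinacy of the relevant component. By the computation in the proof of Proposition \ref{PropTTT} one has $T_{1,1}(t^{j})=\int_{0}^{\infty}r^{2j}\,d\sigma_{1,1}$, so the Stieltjes representing measure of $T_{1,1}$ in the variable $t=r^{2}$ is the image measure $\sigma_{1,1}^{\psi}$ with $\psi(r)=r^{2}$; being compactly supported it is determinate in the sense of Stieltjes. Hence, if $\mu$ is any pseudo-positive representing measure for $T$ and $\mu_{1,1}$ is the component measure associated to $\mu$ by Proposition \ref{pseudopos}, then $\mu_{1,1}^{\psi}$ and $\sigma_{1,1}^{\psi}$ are two non-negative measures on $\left[0,\infty\right)$ with the same moments $T_{1,1}(t^{j})$; determinacy forces $\mu_{1,1}^{\psi}=\sigma_{1,1}^{\psi}$, and therefore $\mu_{1,1}=\sigma_{1,1}$.

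The heart of the argument is then the a priori bound that any pseudo-positive representing measure must satisfy. Since $\mu$ is a finite measure and $x\mapsto Y_{1,1}(x/\left|x\right|)$ is bounded on the sphere by $M:=\max_{\theta\in\mathbb{S}^{d-1}}\left|Y_{1,1}(\theta)\right|$, I would approximate $r^{-1}$ from below by bounded continuous functions $h_{m}\nearrow r^{-1}$ (as with the cut-offs in Proposition \ref{pseudopos}, e.g. $h_{m}(r)=\min(r^{-1},m)$), using that $h_{m}(\left|x\right|)\left|x\right|\leq1$ to dominate $h_{m}(\left|x\right|)Y_{1,1}(x)$ by the constant $M$. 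Monotone convergence on the left (via Proposition \ref{pseudopos}) and dominated convergence on the right then give
\[
\int_{0}^{\infty}r^{-1}\,d\mu_{1,1}=\int_{\mathbb{R}^{d}}Y_{1,1}\!\left(\frac{x}{\left|x\right|}\right)d\mu\leq M\,\left|\mu\right|(\mathbb{R}^{d})<\infty .
\]
Combined with $\mu_{1,1}=\sigma_{1,1}$ this yields $\int_{0}^{\infty}r^{-1}\,d\sigma_{1,1}<\infty$, contradicting the divergence displayed above. Hence $T$ is pseudo-positive definite yet admits no pseudo-positive representing measure.

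The step I expect to be the main obstacle is making this a priori bound fully rigorous, i.e. justifying the identity $\int_{0}^{\infty}r^{-1}d\mu_{1,1}=\int_{\mathbb{R}^{d}}Y_{1,1}(x/\left|x\right|)\,d\mu$ together with its finiteness; this is where one must handle the singularity of $r^{-1}$ at the origin and the behaviour of $\mu$ near $x=0$. The choice of the degree-one harmonic is convenient here, since $Y_{1,1}$ vanishes at $0$ so that the limiting integrand $h_{m}(\left|x\right|)Y_{1,1}(x)=h_{m}(\left|x\right|)\left|x\right|\,Y_{1,1}(x/\left|x\right|)$ stays bounded by $M$ throughout. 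Everything else — pseudo-positive definiteness via Proposition \ref{PropTTT}, determinacy via compact support, and the explicit divergent weighted sum — is routine.
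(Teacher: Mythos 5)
Your proof is correct, but it takes a genuinely different route from the paper's. The paper also works with the single component $(k,l)=(1,1)$, but chooses $\sigma_{1,1}=\delta_{0}$, a point mass at the origin of the radial variable: then for any pseudo-positive representing measure $\mu$ the identity $\int_{0}^{\infty}r^{2}d\mu_{1,1}=T(\left|x\right|^{2}Y_{1,1})=0$ forces $\mu_{1,1}$ to be concentrated at $\{0\}$, while an approximation $h_{m}\rightarrow 1_{\{0\}}$ together with $Y_{1,1}(0)=0$ forces $\mu_{1,1}(\{0\})=0$, contradicting $\mu_{1,1}([0,\infty))=T(Y_{1,1})=1$. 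You instead place atoms at $1/n$ with weights $n^{-2}$ so that $\int_{0}^{\infty}r^{-1}d\sigma_{1,1}=\infty$, identify $\mu_{1,1}$ with $\sigma_{1,1}$ via Stieltjes determinacy of the compactly supported $\sigma_{1,1}^{\psi}$, and then contradict the a priori bound $\int_{0}^{\infty}r^{-1}d\mu_{1,1}\leq M\left|\mu\right|(\mathbb{R}^{d})$ obtained from the cut-offs $h_{m}(r)=\min(r^{-1},m)$ and the bound $h_{m}(\left|x\right|)\left|x\right|\leq 1$. Both arguments exploit the weight $r^{-k}$ in the representation (\ref{defTTT}); the paper's is shorter because it never needs the determinacy step (it derives the contradiction directly from two test functions), whereas yours isolates the failure of the integrability condition $\int_{0}^{\infty}r^{-k}d\sigma_{k,l}<\infty$ (cf.\ (\ref{eqfinite}) and the results of Section \ref{Sexamples} on the summability condition) as the precise obstruction, which makes clearer why the hypothesis (\ref{eqneuCN}) in Theorem \ref{CorMoment} cannot simply be discarded. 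All the steps you flag as delicate do go through: the monotone limit on the radial side diverges because $\sigma_{1,1}(\{0\})=0$, while the spatial side stays bounded by $M\left|\mu\right|(\mathbb{R}^{d})$, which is finite since signed measures here are finite by the Jordan decomposition.
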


\begin{proof}
Let $\sigma$ be a non-negative measure over $\left[  0,R\right]  .$ Let
$f\in\mathbb{C}\left[  x_{1},..,x_{d}\right]  $ and let $f_{k,l}$ be the
Laplace-Fourier coefficient of $f.$ By Proposition \ref{PropTTT} it is clear
that
\[
T\left(  f\right)  :=\int_{0}^{R}f_{1,1}\left(  r\right)  r^{-1}d\sigma\left(
r\right)
\]
is pseudo-positive definite. We take now for $\sigma$ the Dirac functional at
$r=0$. Suppose that $T$ has a signed representing measure $\mu$ which is
pseudo-positive. Then the measure $\mu_{11}$ is non-negative, and it is
defined by the equation $\int_{0}^{\infty}h\left(  r\right)  d\mu_{11}\left(
r\right)  :=\int_{\mathbb{R}^{n}}h\left(  \left|  x\right|  \right)
Y_{11}\left(  x\right)  d\mu$ for any continuous function $h:\left[
0,\infty\right)  \rightarrow\mathbb{C}$ with compact support. Take now
$h\left(  r\right)  =r^{2}.$ Then by Proposition \ref{pseudopos}
\[
\int_{0}^{\infty}r^{2}d\mu_{11}\left(  r\right)  =\int_{\mathbb{R}^{n}}\left|
x\right|  ^{2}Y_{11}\left(  x\right)  d\mu=T\left(  \left|  x\right|
^{2}Y_{11}\left(  x\right)  \right)  =0.
\]
It follows that $\mu_{11}$ has support $\left\{  0\right\}  .$ On the other
hand, if we take a sequence of functions $h_{m}\in C_{c}\left(  \left[
0,\infty\right)  \right)  $ such that $h_{m}\rightarrow1_{\left\{  0\right\}
},$ then we obtain
\[
\mu_{11}\left(  \left\{  0\right\}  \right)  =\lim_{m\rightarrow\infty}%
\int_{\mathbb{R}^{n}}h_{m}\left(  \left|  x\right|  \right)  Y_{11}\left(
x\right)  d\mu.
\]
But $h_{m}\left(  \left|  x\right|  \right)  Y_{11}\left(  x\right)  $
converges to the zero-function, and Lebesgue's theorem shows that $\mu
_{11}\left(  \left\{  0\right\}  \right)  =0,$ so $\mu_{11}=0.$ This is a
contradiction since
\[
\int_{0}^{\infty}1d\mu_{11}\left(  r\right)  =\int_{\mathbb{R}^{n}}%
Y_{11}\left(  x\right)  d\mu=T\left(  Y_{11}\right)  =\int_{0}^{R}%
1d\sigma\left(  r\right)  =1.
\]
The proof is complete.
\end{proof}

ACKNOWLEDGMENT. Both authors acknowledge the support of the Institutes
Partnership Project with the Alexander von Humboldt Foundation, Bonn. The
second author is supported in part by Grant MTM2006-13000-C03-03 of the D.G.I.
of Spain.

Author's addresses:

1. Ognyan Kounchev, Institute of Mathematics, University of Duisburg-Essen,
Lotharstr. 65, 47057 Duisburg, Germany; Institute of Mathematics and
Informatics, Bulgarian Academy of Sciences, 8 Acad. G. Bonchev Str., 1113
Sofia, Bulgaria;

e--mail: kounchev@math.bas.bg; kounchev@gmx.de

2. Hermann Render, Departamento de Matem\'{a}ticas y Computati\'{o}n,
Universidad de la Rioja, Edificio Vives, Luis de Ulloa, s/n. 26004
Logro\~{n}o, Spain; e-mail: render@gmx.de; herender@unirioja.es

\begin{thebibliography}{99}
\bibitem{Akhi65}Akhiezer, N.I., \emph{The Problem of Moments and Some Related
Questions in Analysis}. Oliver \& Boyd, Edinburgh, 1965. (Transl. from Russian
ed. Moscow 1961).

\bibitem{AAR99}Andrews, G.E., Askey, R., Roy, R., \emph{Special functions}.
Cambridge University Press 1999.

\bibitem{ABR92}Axler, S., Bourdon, P., Ramey, W., \emph{Harmonic Function
Theory}. Springer, New York 1992.

\bibitem{Baouendi}Baouendi, M., Goulaouic, C., Lipkin, L., On the operator
$\Delta r^{2}+\mu\left(  \partial/\partial r\right)  r+\lambda.$ \emph{Jour.
Diff. Equations }15 (1974), 499-509.

\bibitem{Berg1987}Berg, Ch., The multivariate moment problem and semigroups.
In: Moments in Mathematics, Proc. of Symp. in Appl. Math., vol. $37$, Editor
H. J. Landau, American Math. Society, Providence, RI, 1987, p. $110-124.$

\bibitem{BCR84}Berg, Ch., Christensen, J.P.R., Ressel, P., \emph{Harmonic
Analysis on Semigroups}. Springer Verlag, New York 1984.

\bibitem{BeTh91}Berg, Ch., Thill, M., Rotation invariant moment problems.
\emph{Acta Math.} 167 (1991), 207-227. $\mathbb{\ }$

\bibitem{butkovskii}Butkovskii, A.G., \emph{Distributed Control Systems.}
Amer. Elsevier Publ. Co., New York, 1969.

\bibitem{butkovskii2}Butkovksii, A.G., Pustylnikov, L.M.,
\emph{Characteristics of Distributed--parameter Systems: Handbook of Equations
of Mathematical Physics and Distributed--parameter Systems.} Kluwer Acad.
Publishers, Dordrecht, 1993.

\bibitem{Chih82}Chihara, T.S., Indeterminate symmetric moment problems.
\emph{J. Math. Anal. Appl.} 85 (1982), 331-346. $\mathbb{\ }$

\bibitem{Cohn}Cohn, D.L., \emph{Measure Theory}. Birkh\"{a}user, Boston 1980
(Reprinted 1993).

\bibitem{CuFi00}Curto, R.E., Fialkow, L.A., The truncated complex K-moment
problem. \emph{Trans. Amer. Math. Soc.} 352 (2000), 2825--2855.

\bibitem{deift}Deift, P., \emph{Orthogonal Plynomials and Random Matrices: a
Riemann-Hilbert Approach}, AMS, Providence, RI, 1999.

\bibitem{Fugl83}Fuglede, B., The multivariate moment problem. \emph{Expo.
Math}. 1 (1983), 47--65.

\bibitem{Gelfand}Gel'fand, I., Vilenkin, N.Ya., \emph{Applications of Harmonic
Analysis}. Academic Press, New York and London, 1964.

\bibitem{kounchev84}Kounchev, O., Distributed moment problem and some related
questions on approximation of functions of many variables. In: Mathematics and
Education in Mathematics, Publishing House of the Bulgarian Academy of
Sciences, Sofia, 1985, p. 454-458.

\bibitem{kounchev85}Kounchev, O., Duality properties for the extreme values of
integrals in distributed moments. In: Differential Equations and Applications,
Tech. Univ. of Russe, 1985, p. 759-762.

\bibitem{kounchev87}Kounchev, O., Extremal problems for the distributed moment
problem. In: Potential theory (Prague, 1987), 187--195, Plenum, New York, 1988.

\bibitem{Koun00}Kounchev, O., \emph{Multivariate Polysplines. Applications to
Numerical and Wavelet Analysis}. Academic Press, San Diego, 2001.

\bibitem{KR05}Kounchev, O., Render, H., Pseudopositive multivariate moment
problem, \emph{C. R. Acad. Bulgare Sci.}\textit{\ 58 (2005), 1243--1246.}

\bibitem{KR05B}Kounchev, O., Render, H., New PDE method for approximating
multivariate integrals, \emph{\ C. R. Acad. Bulgare Sci. 58 (2005), 1373--1378.}

\bibitem{KR07}Kounchev, O., Render, H., A new method for approximating
multivariate integrals, Preprint.

\bibitem{landau}\emph{Moments in Mathematics}, Proc. of Symposia in Applied
Mathematics. Vol. $37$, Editor H. J. Landau, American Math. Society,
Providence, RI, 1987.

\bibitem{McGr80}McGregor, J.L., Solvability criteria for certain
$N$-dimensional moment problems. \emph{Jour. Approx. Theory} 30 (1980), 315--333.

\bibitem{Pede95}Pedersen, H.L., Stieltjes moment problems and the Friedrichs
extension of a positive definite operatores. \emph{Jour. Approx. Theory
}83\textbf{\ }(1995), 289--307.

\bibitem{PuVa99}Putinar, M., Vasilescu, F., Solving moment problems by
dimensional extension. \emph{Ann. Math}. 149 (1999), 1087--1107.

\bibitem{Schm91a}Schm\"{u}dgen, K., The K-moment problem for compact
semi-algebraic sets. \emph{Math. Ann.} 289 (1991), 203--206.

\bibitem{schulzewildenhain}Schulze, B.-W. and  G. Wildenhain. \emph{Methoden
der Potentialtheorie f\"{u}r elliptische Differentialgleichungen beliebiger
Ordnung.} Akademie-Verlag, Berlin 1977; Birkh\"{a}user Verlag, Basel, 1977.

\bibitem{simon}Simon, B., The classical moment problem as a self--adjoint
finite difference operator. \emph{Advances of Mathematics }137\textbf{\ }%
(1998), 82--203.

\bibitem{Sob}Sobolev, S.L., \emph{Cubature Formulas and Modern Analysis: An
introduction. }Gordon and Breach Science Publishers, Montreux, 1992

\bibitem{StWe71}Stein, E.M., Weiss, G., \emph{Introduction to Fourier Analysis
on Euclidean Spaces}. Princeton University Press, Princeton, 1971.

\bibitem{Stoc01}Stochel, J., Solving the truncated moment problem solves the
full moment problem, \emph{Glasgow Math. J.} 43 (2001) 335--341.

\bibitem{StSz98}Stochel, J., Szafraniec, F.H., The complex moment problem and
subnormality: a polar decomposition approach. \emph{J. Funct. Analysis 159}
(1998), 432--491.

\bibitem{zidarov}Zidarov, D., \emph{Inverse Gravimetric Problem in
Geoprospecting and Geodesy, }Elsevier,\emph{ }Amterdam, $1990.$  
\end{thebibliography}
\end{document}